\newtheorem{thm}{Theorem}[section]
\newtheorem{lem}[thm]{Lemma}
\newtheorem{prop}[thm]{Proposition}
\newtheorem{cor}[thm]{Corollary}
\newtheorem{remark}[thm]{Remark}
\newtheorem{dfn}[thm]{Definition}
\newtheorem{conjecture}[thm]{Conjecture}
\newtheorem*{thm*}{Theorem}
\definecolor{grey}{gray}{0.41}
\newcommand{\mc}{\mathcal{C}}
\newcommand{\md}{\mathcal{D}}
\newcommand{\none}{n-1}
\newcommand{\qs}{\mathcal{S}}
\DeclareMathOperator{\rw}{rw}
\DeclareMathOperator{\cw}{cw}
\DeclareMathOperator{\bw}{bw}
\DeclareMathOperator{\SYam}{SYam}
\DeclareMathOperator{\SYT}{SYT}
\DeclareMathOperator{\SSYT}{SSYT}
\DeclareMathOperator{\SRCT}{SRCT}
\DeclareMathOperator{\SRT}{SRT}
\DeclareMathOperator{\slink}{slink}
\DeclareMathOperator{\fl}{fl}
\DeclareMathOperator{\rev}{rev}
\DeclareMathOperator{\flip}{flip}
\DeclareMathOperator{\ID}{ID}
\numberwithin{equation}{section}
\begin{document}

\title{From symmetric fundamental expansions\\ to Schur positivity}           

\author{Austin Roberts
\\ 
Department of Mathematics\\ Highline College\\
 Des Moines, WA 98198, USA\\
 }

\date{\today}

\maketitle


\begin{abstract}     
We consider families of quasisymmetric functions with the property that if a symmetric function $f$ is a positive sum of functions in one of these families, then $f$ is necessarily a positive sum of Schur functions.  Furthermore, in each of the families studied, we give a combinatorial description of the Schur coefficients of $f$. We organize six such families into a poset, where functions in higher families in the poset are always positive integer sums of functions in each of the lower families. This poset includes the Schur functions, the quasisymmetric Schur functions, the fundamental quasisymmetric generating functions of shifted dual equivalence classes, as well as three new families of functions --- one of which is conjectured to be a basis of the vector space of quasisymmetric functions. Each of the six families is realized as the fundamental quasisymmetric generating functions over the classes of some refinement of dual Knuth equivalence. Thus, we also produce a poset of refinements of dual Knuth equivalence. In doing so, we define quasi-dual equivalence to provide classes that generate quasisymmetric Schur functions.
\end{abstract}

\pagebreak

\section{Introduction}

The problem of how to express a symmetric function in terms of the basis of Schur functions arises prominently in many fields, including algebraic combinatorics, representation theory, and statistical mechanics, amongst others. For instance, showing that a function is a positive integer sum of Schur functions (Schur positive) is equivalent to showing that the function corresponds to a representation of the general linear group, where the coefficients of said sum give the multiplicities of irreducible sub representations. See \citep{sagan} or \citep{Stanley2}) for a treatment. In many cases, such as Macdonald polynomials or plethysms of Schur functions, a symmetric function has a known expansion in terms of the fundamental quasisymmetric functions while an explicit expansion over the Schur functions remains elusive (see \citep{HHL} and \citep{LW}).

In this paper, we consider families of quasisymmetric functions with the property that if a symmetric function $f$ is a positive sum of functions in one of these families, then $f$ is necessarily a positive sum of Schur functions. Furthermore, in each of the families studied, we give a combinatorial description of the Schur coefficients of $f$. We organize six such families into a poset, where functions in higher families in the poset are positive integer sums of functions in each of the lower families. This poset includes the Schur functions, quasisymmetric Schur functions, the fundamental quasisymmetric generating functions of shifted dual equivalence classes $\{f^{(h)}\}$, as well as three new families of functions $\{f^{(k)}\}$ for $k=0,1,2$ --- conjecturing that $\{f^{(2)}\}$ forms a basis for the vector space of quasisymmetric functions. 
 
The poset of functions is first realized using a poset of equivalence relations. Each of the families is defined as sums of fundamental quasisymmetric functions over equivalence classes  on standard Young tableaux of fixed partition shape $\lambda$, $\SYT(\lambda)$. A higher position in our poset represents a courser relation. We then use the Robinson-Schensted-Knuth (RSK) correspondence to turn equivalence classes on tableaux into equivalence classes on permutations, realizing each equivalence relation as some restriction of dual Knuth equivalence.
An illustration of said poset of equivalence classes,  the related poset of quasisymmetric functions, and the generators of these classes  can be found in Figure~\ref{equiv poset}.

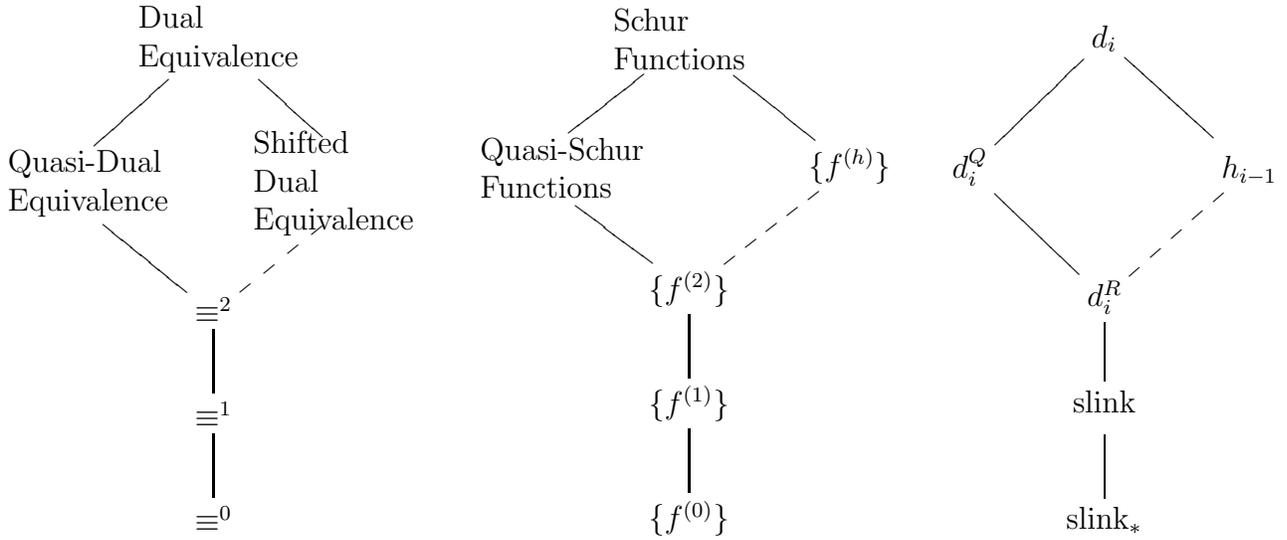
\begin{figure}[h]
 \[\begin{array}{ccc}
  \xymatrix{ 
  &\hspace{-.2in}\parbox{2cm}{Dual\\ Equivalence}\hspace{-.2in} \ar@{-}[dl]\ar@{-}[dr]&\\
 \parbox{2.2cm}{Quasi-Dual\\ Equivalence}\hspace{-.4in}\ar@{-}[dr]& &\hspace{-.4in}\parbox{2.2cm}{Shifted Dual\\ Equivalence}\ar@{--}[dl]\\
  &\equiv^2\ar@{-}[d]&\\
  &\equiv^1\ar@{-}[d]&\\
  &\equiv^0&
  }
    &  \hspace{.1in}
  \xymatrix{
   &\hspace{-.2in}\parbox{2cm}{Schur\\ Functions}\hspace{-.2in} \ar@{-}[dl]\ar@{-}[dr]&\\
 \parbox{2.2cm}{Quasi-Schur Functions}\hspace{-.4in}\ar@{-}[dr]& &\{f^{(h)}\}\ar@{--}[dl]\\
  &\{f^{(2)}\}\ar@{-}[d]&\\
  &\{f^{(1)}\}\ar@{-}[d]&\\
  &\{f^{(0)}\}&
  }
    &  \hspace{.1in}
   \xymatrix{ 
 &\raisebox{0cm}[.6cm]{$ d_i $}\ar@{-}[dl]\ar@{-}[dr]&\\
   \raisebox{0cm}[.6cm]{$d^Q_i$} \ar@{-}[dr]& & \raisebox{0cm}[.6cm]{$h_{i-1}$}\ar@{--}[dl]\\
  &\raisebox{0cm}[.5cm]{$ d_i^R $}\ar@{-}[d]&\\
  &\raisebox{.2cm}[.5cm]{$\slink$} \ar@{-}[d]&\\
  &\raisebox{.2cm}[.5cm]{$\slink_*$} &
  }
  \end{array}\]
\caption{At left, the poset of equivalence classes. At center, the associated quasisymmetric functions of the equivalence class. At right, the generators of the equivalence relations. The dashed lines denotes that a transformation is required, reversing all permutations in each equivalence class for the left and right posets.}\label{equiv poset}
\end{figure}

While detailed definitions will be given later, we can describe the main results of the paper as follows. %
In \citep{Gessel}, Gessel used the set $\SYT(\lambda)$ to express the Schur function  $s_\lambda$ as,

\begin{align}\label{Schur def}
s_\lambda =\sum_{T\in\SYT(\lambda)} F_{\ID(T)}(X),
\end{align}

\noindent where $F_{\ID(T)}(X)$ is the fundamental quasisymmetric function

\begin{align}
F_{\ID(T)}(X)=\sum_{j_1\leq \ldots \leq j_n \atop j_k = j_{k+1} \implies j\notin \ID(T)} x_{j_1}\cdots x_{j_n}.
\end{align}

\noindent
Here, $\ID(T)$ is the inverse descent set of $T$. 
We define three equivalence relations $\equiv^{0}$, $\equiv^1$, and $\equiv^2$ on $\SYT(\lambda)$, each a refinement of the next. One particularly nice equivalence class of  $\equiv^{0}$ and $\equiv^{1}$ is the class of a single element, the \emph{superstandard tableau} $U_\lambda$. Here, $\lambda$ is a partition of $n$ and $U_\lambda\in\SYT(n)$ is the standard Young tableau attained by filling each row of $\lambda$, in order, with as small of values as possible. The main result can then be stated as follows.

\begin{thm}\label{symmetric to Schur}
For $k=0,1,2,$ let $\mc\subset \SYT(n)$ be the disjoint union of equivalence classes of $\equiv^{k}$ such that ${f=\sum_{T\in \mc} F_{\ID(T)}}$ is a symmetric function. Then  
\[ f=\sum_{\lambda\vdash n} c_\lambda s_\lambda, \]
\noindent
where $c_\lambda$ is the multiplicity of $U_\lambda$ in $\mc$.
\end{thm}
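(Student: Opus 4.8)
The plan is to feed the hypothesis that $f$ is symmetric into Gessel's expansion \eqref{Schur def} and then exploit a unitriangularity. Since $f$ is symmetric with integer coefficients, write $f=\sum_{\mu\vdash n}a_\mu s_\mu$ with $a_\mu\in\zz$; the theorem asserts $a_\mu=c_\mu$. For a partition $\lambda$ let $F_\lambda$ denote the fundamental indexed by the composition $(\lambda_1,\lambda_2,\dots)$, i.e. by the descent set $\{\lambda_1,\lambda_1+\lambda_2,\dots\}$, and set $N_{\mu,\lambda}=\#\{T\in\SYT(\mu):\ID(T)=\lambda\}$. Using \eqref{Schur def}, the identity $\sum_{T\in\mc}F_{\ID(T)}=\sum_\mu a_\mu\sum_{T\in\SYT(\mu)}F_{\ID(T)}$ holds, and extracting the coefficient of $F_\lambda$ gives
\[
\#\{T\in\mc:\ID(T)=\lambda\}=\sum_{\mu\vdash n}a_\mu\,N_{\mu,\lambda}.
\]
I would then invoke the standard triangularity of Gessel's expansion: $N_{\mu,\lambda}=0$ unless $\lambda\trianglelefteq\mu$ in dominance order, and $N_{\lambda,\lambda}=1$, the unique witness being the superstandard tableau $U_\lambda$ (whose inverse descent composition is $\lambda$, forced by the run structure of its descent set). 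Thus $(N_{\mu,\lambda})_{\mu,\lambda\vdash n}$ is unitriangular, so the integer vector $\big([F_\lambda]f\big)_{\lambda\vdash n}$ determines $(a_\mu)$; the same holds for the ``model'' function $g=\sum_{\mu:\,U_\mu\in\mc}s_\mu$, whose Schur coefficients are precisely the $c_\mu$. Hence it suffices to prove, for every $\lambda\vdash n$,
\[
\#\{T\in\mc:\ID(T)=\lambda\}=\#\Big\{T\in\textstyle\bigsqcup_{\mu:\,U_\mu\in\mc}\SYT(\mu)\ :\ \ID(T)=\lambda\Big\},
\]
after which $a_\mu=c_\mu\geq 0$ for all $\mu$, and $f=\sum_\mu c_\mu s_\mu$ since the $s_\mu$ form a basis.

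It remains to prove this combinatorial identity, and here the structure of the relations $\equiv^k$ and the symmetry of $f$ are both essential (the identity fails for a union of $\equiv^k$-classes whose generating function is not symmetric --- e.g. a single quasi-Schur function). First I would reduce to $k=0$: since $\equiv^0$ refines $\equiv^1$ and $\equiv^2$, a set $\mc$ that is a union of $\equiv^1$- or $\equiv^2$-classes is a union of $\equiv^0$-classes, and $c_\mu=\#\{T\in\mc:T=U_\mu\}$ is unchanged. The heart is then a rigidity statement: a union $\mc$ of $\equiv^0$-classes whose generating function is symmetric must agree, on every partition-indexed fundamental coefficient, with a union of whole dual-equivalence classes $\SYT(\mu)$, with $U_\mu\in\mc$ serving as the indicator of whether $\SYT(\mu)$ is ``present''. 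Granting this, $c_\mu$ counts exactly the full shapes that occur, and the displayed identity follows. To establish the rigidity I would argue by descending induction on dominance: for $\lambda=(n)$ the only standard tableau with empty descent set is $U_{(n)}$, so both sides agree; for general $\lambda$ the contributions from shapes $\mu\vartriangleright\lambda$ are controlled by the inductive hypothesis together with the unitriangularity above, leaving only the tableaux of shape exactly $\lambda$, of which $U_\lambda$ is the unique one with $\ID=\lambda$ --- and it lies in $\mc$ precisely when $c_\lambda=1$. The extra input that makes the step go is that each $\equiv^0$-class $C\subseteq\SYT(\mu)$ has a fundamental-positive generating function with a well-defined leading partition, equal to $\mu$ exactly when $U_\mu\in C$, and that these leading partitions interact with the statistic $\ID$ in the dual-equivalence-graph manner established earlier (for the generating moves $\slink_*$, $\slink$, $d_i^R$); symmetry of $f$ then forces the non-leading parts to cancel, pinning each multiplicity to the presence of the corresponding superstandard tableau.

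The main obstacle is exactly this rigidity: soft linear algebra supplies the unitriangular bookkeeping, but not the fact that a symmetric union of $\equiv^0$-classes cannot ``hide'' a Schur component in classes that avoid their superstandard tableau and cancel among themselves. Ruling this out requires the fine combinatorics of the generating moves for $\equiv^0$ --- in effect verifying the appropriate dual-equivalence-graph axioms for $(\SYT(n),\equiv^0,\ID)$, checking that they are inherited by the restriction to $\mc$, and reading off that each connected piece of $\mc$ carries a Schur function's worth of the expansion exactly when it is anchored by a superstandard tableau. Once that structural fact is in hand, the reduction of the first paragraph finishes the proof, and the cases $k=1,2$ follow from $k=0$ as noted.
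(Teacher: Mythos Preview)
Your linear-algebraic reduction via unitriangularity is sound, but the proof stops precisely where the content lies. You correctly isolate the ``rigidity'' statement
\[
\#\{T\in\mc:\ID(T)=\lambda\}=\sum_{\mu:\,U_\mu\in\mc}N_{\mu,\lambda}\qquad(\lambda\vdash n)
\]
as equivalent to the theorem, and then do not prove it: the inductive step appeals to a ``well-defined leading partition'' for each $\equiv^0$-class and to ``dual-equivalence-graph axioms'' that are nowhere verified. In fact that route is blocked for $\equiv^0$: a nontrivial $\equiv^0$-class is just $\{T,\slink_*(T)\}$, so its generating function is a sum of two fundamentals and is \emph{not} a Schur function. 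Such classes do not satisfy Assaf-style DEG axioms, and there is no mechanism in your argument by which symmetry of the global sum forces the partition-indexed $F$-coefficients of $\mc$ to match those of a union of full $\SYT(\mu)$'s. The induction you sketch assumes exactly what needs to be shown at the step for $\lambda$.

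The paper's proof avoids this entirely and is much shorter. It invokes the Egge--Loehr--Warrington lemma (Lemma~\ref{comp Schur fund}): since $f$ is symmetric, one may replace each $F_{\beta(T)}$ by the composition Schur function $s_{\beta(T)}$. The problem then becomes the purely signed identity $\sum_{T\in\mc}s_{\beta(T)}=\sum_{U_\lambda\in\mc}s_\lambda$ (Lemma~\ref{beta to lambda}), and this is established by a sign-reversing involution: on any $\equiv^0$-class other than $\{U_\lambda\}$, the map $\slink_*$ pairs tableaux with $s_{\beta(T)}+s_{\beta(\slink_*(T))}=0$ (Lemmas~\ref{big slink} and~\ref{0 classes}). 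No triangularity, no DEG axioms, and no comparison with a ``model'' union of full shapes is needed; symmetry is used only once, to license the $F\to s$ replacement. If you want to salvage your approach, the missing ingredient is exactly an involution or bijection proving the displayed identity above---and the natural one is $\slink_*$ acting on composition Schur functions, which brings you back to the paper's argument.
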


\noindent Corollary~\ref{positive perms} then generalizes this theorem to equivalence classes of permutations, where each relation becomes a refinement of dual Knuth equivalence. All of these relations on permutations has the added benefit of commuting with Knuth equivalences, (or, equivalently, jeu de taquin), as stated in Proposition~\ref{Kj commutes}. In Conjecture~\ref{f2 basis},  we further propose that the family of fundamental quasisymmetric generating functions over $\equiv^2$, $\{f^{(2)}\}$, form a basis for the quasisymmetric functions.

We then turn our attention to two applications. First is the set of quasisymmetric functions, which --- as the name suggests --- are a quasisymmetric analogue of the Schur functions. They were introduced in \citep{haglund2011quasisymmetric} and have since been studied for there relation to Demazure atoms, the Littlewood-Richardson rule, the RSK correspondence, and 0-Hecke algebras, amongst others (see \citep{Luoto2013introduction} for an overview of the topic). 
Quasisymmetric Schur functions can be realized as a sum over standard reverse composition tableaux of a fixed composition shape, $\SRCT(\alpha)$.
 In Proposition~\ref{D transitive}, we describe a transitive action on $\SRCT(\alpha)$. Using a result in \citep{mason2008decomposition}, $\SRCT(\alpha)$ can be mapped to a subset of the standard reverse tableaux of a fixed partition shape $\SRT(\lambda)$. We show in Corollary~\ref{bar d transitive} that Mason's bijection sends our transitive action on $\SRCT(\alpha)$ to a subset of dual Knuth equivalences, which we term \emph{quasi-dual equivalences}. In Lemma~\ref{bar d coarse}, we show that the quasi-dual equivalence is a coarsening of $\equiv^k$, for each $k=0, 1, 2$. Hence the quasisymmetric Schur functions are positive integer sums of functions in $\{f^{(k)}\}$, as stated in Theorem~\ref{bar d coarse}.

Finally, we consider shifted dual equivalence. Our presentation is most closely related to \citep{Haiman}, though it was was originally studied simultaneously in \citep{sagan1987shifted} and \citep{worley1984theory}. Shifted dual equivalence is related to enumerative properties of reduced words in Lie type B, Stanley symmetric functions, the $P$ and $Q$-Schur functions, and shifted dual equivalence graphs (also see \citep{Billey2014Coxeter}, \citep{stanley1984number}, and \citep[Ch. 7]{Stanley2}). 
Similar to with the quasisymmetric Schur functions, Proposition~\ref{restricted are shifted} states that the generators for shifted dual equivalence strictly contain the generators for $\equiv^2$ after applying a simple involution. Hence, the fundamental quasisymmetric generating functions over shifted dual equivalence classes may be added to our poset, as stated in Theorem~\ref{fh positive}. In Proposition~\ref{ssyt are equiv2}, we further show that the set of row reading words of shifted standard Young tableaux with a fixed shape comprise an equivalence class of $\equiv^2$.

The paper is organized as follows. After preliminary lemmas and definitions, Section~\ref{relations} defines the equivalence relations $\equiv^k$ and proves Theorem~\ref{symmetric to Schur}. Section~\ref{equiv on perms} generalizes results to permutations, with the main result generalized in Corollary~\ref{equiv on perms}.
 Section~\ref{quasis} is dedicated to the previously mentioned results related to quasisymmetric Schur functions. Finally, Section~\ref{shifted}, which is independent of Section~\ref{quasis}, is dedicated to proving the results related to shifted dual equivalence.

\section{The equivalence relations}\label{relations}

\subsection{Preliminaries}
A composition $\alpha=(\alpha_1, \alpha_2, \ldots, \alpha_k)$ is a finite sequence of positive integers. We write $\alpha\vDash n$ and say that $\alpha$ is a \emph{composition of $n$} if $\sum_i \alpha_i=n$. A partition $\lambda=(\lambda_1, \lambda_2, \ldots, \lambda_k)$ is a weakly decreasing composition. We write $\lambda \vdash n$ and say $\lambda$ is a partition of $n$ if $\sum_i \lambda_i =n$. In this paper, $\lambda$ will always be a partition of $n$,  $\alpha$ will always be a composition of $n$, and $\lambda(\alpha)$ is the composition achieved by sorting the parts of $\alpha$ into weakly decreasing order. 

The \emph{diagram} of a composition or a partition will always be given in French notation. That is, the diagram is given by a set of left justified \emph{cells}, drawn as boxes, in the Cartesian plane, where the $i^{th}$ row from bottom to top has $\alpha_i$  or $\lambda_i$ cells, respectively. The bottom left cell can be assumed to be the origin. The underlying composition or  partition is then referred to as the \emph{shape} of the diagram. Given a partition $\lambda$, the \emph{conjugate} of $\lambda$, denoted $\lambda'$, is the partition whose $i^{th}$ column has $\lambda_i$ many cells.

 A \emph{filling} $T$ of a diagram is a function that assigns a positive integer to each box of the diagram.  A filling is \emph{standard} if it uses every number in $[n]=\{1, \ldots, n\}$ exactly once.
The set of \emph{standard Young tableaux} of shape $\lambda$, denoted $\SYT(\lambda)$, is the set of standard fillings of $\lambda$ that are increasing up columns and across rows from left to right. The union of $\SYT(\lambda)$ over all $\lambda\vdash n$ is denoted $\SYT(n)$. The \emph{superstandard tableau} of shape $\lambda$, denoted $U_\lambda$, is the standard Young tableau formed by placing the numbers 1 through $n$, in order, in the lowest row possible.
The \emph{row reading word} of a filling, $\rw(T)$, is obtained by reading the values of each row from left to right, starting with the top row and continuing down. The row reading word will always be used for standard Young tableaux, as well as for ordering their cells, but other reading words will be introduced for other types of fillings as needed.

In this paper, permutations are always given in one-line notation. Given a permutation $\pi \in S_n$, the \emph{inverse descent set} of $\pi$, $\ID(\pi)$, is the set of $i\in [n-1]$ such that $i$ occurs after $i+1$ in $\pi$. 
 Alternately, we can encode $\ID(\pi)$ with the composition $\beta(\pi)=(\beta_1(\pi), \dots,\beta_k(\pi)) \vDash n$, letting $\beta_i$ be the difference between the $i$ and $i-1^{th}$ numbers in $\ID(\pi)\cup\{n\}$, where the zero$^{th}$ number is always treated as $0$. Given a standard filling $T$, define $\ID(T)$ and $\beta(T)$ via the reading word of $T$. Define the \emph{$i^{th}$ run of $T$}, $T|_{(a,b]}$, to be the restriction of $T$ to the set of cells with values in the integer interval $(a,b]$, where $a$ and $b$ are the $i-1$ and $i^{th}$ numbers in $\ID(T)\cup \{n\}$, respectively. Notice that $b-a=\beta_i$. Similarly, the \emph{first $j$ runs of $T$} is the standard Young tableau achieved by taking the union of the first $j$ runs of $T$. 
Notice that for any $T\in\SYT(\lambda)$, the cells of the runs of $T$ fully determine $T$. For this reason, it can be helpful to consider the \emph{unstandard Young tableau}, UYT, achieved by placing $i$'s in each cell of the $i^{th}$ run. See Figure~\ref{UYT} for examples.

 \ytableausetup{smalltableaux,aligntableaux=center,mathmode}
\begin{figure}[h]
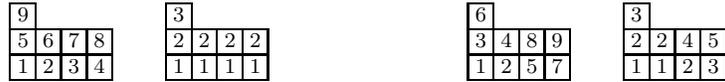

\[\begin{array}{ccccccccc}
\ytableaushort{9,5678,1234}&& \ytableaushort{3,2222,1111} &&\quad\quad\quad&&
\ytableaushort{6,3489,1257}&& \ytableaushort{3,2245,1123}
\end{array}\]
\caption{Two tableaux in $\SYT(\lambda)$ with their associated fillings in UYT$(\lambda)$ to their right. Here, $\lambda=(4, 4, 1)$. At left, $U_\lambda$ with $\rw(U_\lambda)=956781234$, $\ID(U_\lambda)=\{4, 8\}$, and $\beta(T)=(4,4,1)$. At right, a tableau $T$ with $\rw(T)=634891257$, $\ID(T)= \{2, 5, 7\}$ and $\beta(T)=(2,3,2,2)$.} \label{UYT}
\end{figure}

The RSK correspondence provides a bijection that sends each $\pi\in S_n$ to an ordered pair of standard Young tableaux $(P(\pi), Q(\pi))$, where $P(\pi)$ and $Q(\pi)$ are both in $\SYT(\lambda)$ for some $\lambda\vdash n$. Here, $P(\pi)$ is termed the \emph{insertion} tableau and $Q(\pi)$ is termed the \emph{recording tableau}.  Two permutations with the same $P$ tableau are \emph{Knuth equivalent}, while two permutations with the same $Q$ tableau are \emph{dual Knuth equivalent} or just \emph{dual equivalent}. The map from $\pi$ to $(P(\pi), Q(\pi))$ can be achieved via the `row bumping algorithm' or `jeu de taquin' and is more generally defined on words, but we will restrict our attention to permutations. We will assume familiarity with the RSK correspondence, though a treatment can be found in \citep{sagan}.

Let $\phi$ be a function on standard Young tableaux that restricts to an involution on each $\SYT(\lambda)$. Define the action of $\phi$ on $S_n$  \emph{via the insertion tableau} to be the function that sends $\pi\in S_n$ to the unique $\pi'$ such that $P(\pi')=\phi\circ P(\pi)$ and $Q(\pi')=Q(\pi)$.

In the context of permutations, the role of $U_\lambda$ will be replaced by the set of \emph{standardized Yamanouchi words} of shape $\lambda$, defined as 
\begin{equation}
\SYam(\lambda)=\{\pi \in S_n \colon P(\pi)=U_\lambda\}
\end{equation}
\noindent
As an aside, we may also algorithmically generate $\SYam(\lambda)$. First generate the set of words with $\lambda_i$ many $i$'s such that when reading in reverse order there are always weakly more $i$'s than $i+1$'s. These are the  Yamanouchi words of weight $\lambda$. To achieve $\SYam(\lambda)$, turn each word into a permutation with the same relative order, where an $i$ that occurs earlier in reading order is considered smaller than an $i$ with a larger index.

We may generate \emph{dual equivalence classes} of permutations as follows.
Given a permutation in $S_n$ expressed in one-line notation, define an \emph{elementary dual equivalence} as an involution $d_i$ that interchanges the values $i-1, i,$ and $i+1$ as
\begin{equation}\label{dual equiv}
\begin{split}
d_i(\ldots i \ldots i-1 \ldots i+1 \ldots) = (\ldots i+1 \ldots i-1 \ldots i \ldots),\\
d_i(\ldots i \ldots i+1 \ldots i-1 \ldots) = (\ldots i-1 \ldots i+1 \ldots i \ldots),
\end{split}
\end{equation}
and as the identity when $i$ is between $i-1$ and $i+1$.
Two words are then dual equivalent if one may be transformed into the other by successive elementary dual equivalences.
As an example, 21345 is dual equivalent to 51234 because
$d_4(d_3(d_2( 21345 ))) = d_4(d_3( 31245 )) = d_4( 41235 ) = 51234$. 

For a permutation $\pi \in S_{n}$, let $\pi|_I$ be the subword consisting of values in the interval $I$.  Let $\fl(\pi|_I) \in
S_{|I|}$ be the permutation with the same relative order as $\pi|_{I}$. Here $\mathrm{fl}$ is the \textit{flattening operator}.  For example $\fl(31245|_{[2,4]})=\fl(324)=213$.
By considering the action of $d_i$ on $\fl(\pi|_{[i-1,i+1]})$, we may express all nontrivial actions as
\begin{equation}\label{flat dual}
x1y \quad  \quad 
x3y,
\end{equation}
\noindent
where $d_i$ swaps the values $x$ and $y$ in $\fl(\pi|_{[i-1,i+1]})$ while fixing all values of $\pi$ not in $[i-1, i+1]$.

Elementary dual equivalences act on standard Young tableaux via their row reading words, as shown in Figure~\ref{dual tableau}. The next theorem expresses the transitivity of this action.

\begin{figure}[h]
\[
\xymatrix{
\ytableaushort{2,1345} \ar@{<->}[r]^{d_2} & \ytableaushort{3,1245} \ar@{<->}[r]^{d_3}&\ytableaushort{4,1235}  \ar@{<->}[r]^{d_4}&\ytableaushort{5,1234}
}
\]
\caption{The action of $d_i$ on $\SYT((4,1))$.}\label{dual tableau}
\end{figure}
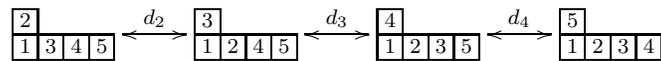

\begin{thm}[{\cite[Prop.~2.4]{Haiman}}]\label{Haiman}
Two standard Young tableaux on partition shapes are dual equivalent if and only if they have the same shape.
\end{thm}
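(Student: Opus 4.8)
The plan is to prove both directions. The forward direction is the easy one: elementary dual equivalences $d_i$ act on a standard Young tableau via its row reading word, and one checks directly that $d_i$ preserves the property of being a standard Young tableau of a given shape $\lambda$ — indeed, the shape is literally unchanged because $d_i$ only permutes the \emph{values} $i-1,i,i+1$ among the cells they already occupy, never moving cells. (The content that this is a well-defined involution on $\SYT(\lambda)$ is exactly what Figure~\ref{dual tableau} illustrates, and it follows from the local description in~\eqref{flat dual}.) So if $T$ and $T'$ are dual equivalent, they have the same shape.

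The substantive direction is the converse: any two tableaux of the same shape $\lambda$ are connected by a sequence of elementary dual equivalences. First I would reduce to connecting an arbitrary $T\in\SYT(\lambda)$ to the superstandard tableau $U_\lambda$; since dual equivalence is an equivalence relation, transitivity on $\SYT(\lambda)$ follows once every tableau reaches this common target. To do this I would induct on $n$. Given $T$, locate the cell containing $n$; it sits at an outer corner of $\lambda$, say at the end of row $r$. If $r$ is already the top row (equivalently, the corner of $\lambda$ of maximal height), then deleting $n$ gives a tableau $T^-\in\SYT(\lambda^-)$ where $\lambda^-$ is $\lambda$ with that corner removed; by induction $T^-$ is dual equivalent to $U_{\lambda^-}$, and the dual equivalences $d_i$ with $i\le n-1$ involved in that chain do not move the value $n$ (they only touch $i-1,i,i+1\le n$), so the same chain carries $T$ to the tableau obtained from $U_{\lambda^-}$ by appending $n$ in that corner — which is $U_\lambda$ precisely when the corner is the one $U_\lambda$ uses for its largest entry.

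The remaining work is therefore a \emph{migration lemma}: if $n$ lies in a non-top corner of $T$, we can apply a sequence of $d_i$'s to move the cell-position of $n$ to the top corner without otherwise disturbing things in a way we can't recover. Concretely, I would show that if the cells holding $n$ and $n-1$ (resp. the relevant small values) are positioned so that $d_{n-1}$ acts nontrivially, applying it either moves $n$ "up" toward the desired corner or sets up the configuration so a further $d_{n-2}$, $d_{n-3},\dots$ does; this is the standard "promotion/migration of the largest letter" argument. The main obstacle — and where care is genuinely needed — is making this migration argument precise: one must verify that the chain of $d_i$'s used to slide $n$ into the top corner is valid at every step (each $d_i$ must be a legal elementary dual equivalence, i.e.\ the pattern $x1y$ or $x3y$ of~\eqref{flat dual} must actually occur in $\fl(\rw(T)|_{[i-1,i+1]})$) and that after the slide the restriction to $[n-1]$ is still a genuine element of $\SYT(\lambda^-)$ for the corner $\lambda^-$ that $U_\lambda$ wants, so that the inductive hypothesis applies. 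Once the migration lemma is in hand, the induction closes and both directions are complete.
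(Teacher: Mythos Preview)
The paper does not supply its own proof of this statement: Theorem~\ref{Haiman} is quoted directly from \cite[Prop.~2.4]{Haiman} and used as a black box, so there is no in-paper argument to compare against. That said, your outline is a reasonable route to the result, with one point that needs correcting.

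Your forward direction is fine. For the converse, the induction-plus-migration strategy is standard and works, but the migration lemma as you describe it is slightly off. You write that if $d_{n-1}$ does not immediately move $n$ upward, then ``a further $d_{n-2}$, $d_{n-3},\dots$ does''; but $d_{n-2}, d_{n-3},\dots$ never touch the value $n$ at all, since $d_i$ only permutes $i-1,i,i+1$. The only elementary dual equivalence that can relocate $n$ is $d_{n-1}$, and it does so by swapping the cells of $n$ and $n-1$ when $n-2$ lies between them in reading order. The correct structure of the migration step is therefore: with $n$ sitting in its current corner $c$, invoke the inductive hypothesis on $\SYT(\lambda\setminus c)$ (a partition of $n-1$) to arrange $n-1$ and $n-2$ so that the pattern of~\eqref{flat dual} is realized with $n-1$ in the next corner you want $n$ to occupy; then a single application of $d_{n-1}$ moves $n$ there. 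Iterating this corner-by-corner walks $n$ to the top corner. Once you phrase it this way the verification that each step stays inside $\SYT(\lambda)$ is immediate from the forward direction, and the induction closes.

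As an aside, the facts recorded immediately after the theorem in the paper, namely~\eqref{d P commute} together with the RSK bijection, give a shorter alternative: for every $T\in\SYT(\lambda)$ the row reading word $\rw(T)$ has $P(\rw(T))=T$ and a recording tableau $Q(\rw(T))$ that depends only on $\lambda$, so all such reading words are dual equivalent as permutations, hence all $T\in\SYT(\lambda)$ are dual equivalent as tableaux.
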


\noindent
 In fact, $d_i$ is defined so that 
\begin{equation}\label{d P commute}
P\circ d_i(\pi)=d_i\circ P(\pi) \quad \textup{and} \quad Q\circ d_i(\pi)=Q(\pi).
\end{equation}

\noindent
Furthermore
\begin{equation}\label{ID and P commute}
\ID(P(\pi))=\ID(\pi).
\end{equation}

The \emph{Knuth equivalences} act similarly on permutations, with the roles of $P$ and $Q$ reversed. Define the generator 
\begin{equation}
K_i(\pi)=(d_i(\pi^{-1}))^{-1}.
\end{equation}
More explicitly, to perform $K_i$, consider $\pi_{[i-1, i+1]}$ written using the values $x<y<z$. Then $K_i$ permutes the values in $\pi_{[i-1, i+1]}$ by taking $yxz$ to $yzx$; $xzy$ to $zxy$; and fixing $xyz$ as well as $zyx$. For example, $K_4(42153)=42513$. These involutions interact with the RSK correspondence by
\begin{equation}\label{K Q commute}
P\circ K_i(\pi)=P(\pi) \quad \textup{and} \quad Q\circ K_i(\pi)=d_i\circ Q(\pi).
\end{equation}

\noindent
The previous facts about dual equivalence were shown in \citep{Haiman}, while a more general treatment may be found in \citep{sagan}.

We will be using tableaux as an enumerative tool for symmetric function calculations. Traditionally, the symmetric functions are defined as the fixed set of formal power series in variables $x_1, x_2, \ldots$ under permutations of their indices. For our purposes, it is enough to define the symmetric functions of degree $n$ as the real vector space generated by the basis of Schur functions $s_\lambda$, where $\lambda \vdash n$. We will additionally care about the spanning set of \emph{composition Schur functions}, $s_{\alpha}$, where $\alpha \vDash n$. Each $s_\alpha$ is equal to $0$, $s_\lambda$, or  $-s_\lambda$ for a unique $\lambda\vdash n$ attained from $\alpha$. To be more specific, if $\beta$ is the result of taking $\alpha$ and replacing some $(\alpha_i, \alpha_{i+1})$ with $(\alpha_{i+1}-1, \alpha_i +1)$, then $s_\alpha=-s_\beta$. In particular, if there exists an $i$ such that $\alpha_i+1=\alpha_{i+1}$, then $s_\alpha=0$. If  $s_{\alpha}\neq 0$, a series of such swaps of $\alpha_i$ and $\alpha_{i+1}$ may always be used to transform $\alpha$ into a unique partition $\lambda$. 

The relationship between composition Schur functions and (partition) Schur functions is sometimes called the \emph{slinky correspondence} 
  for the following reason. If $\alpha$ is represented by a diagram and $s_\alpha \neq 0$, then we may relate $s_\alpha$ to a unique $s_\lambda$, where $\lambda\vdash n$ by letting gravity pull each row down to create a partition shape, as in Figure~\ref{slinky correspondence}. If this process does not yield a partition, the result is 0. Otherwise, the partition shape is $\lambda$, and the sign in front of the Schur function $s_\lambda$ is $(-1)^{d}$, where $d$ is the sum of the number or rows each part of the composition was shifted down to make the partition. The term \emph{slinky} is meant to evoke how each row \emph{slinks} downward.

\begin{figure}[h]
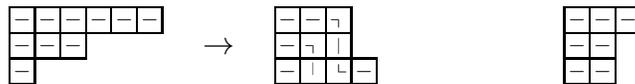

 \ytableausetup{smalltableaux,aligntableaux=center,mathmode}
\[ \ytableaushort{------,---,- }
\quad\to\quad
\ytableaushort{--{\raisebox{-.054cm}{$_\urcorner$}\hspace{.076cm}},-{\raisebox{-.054cm}{$_\urcorner$}\hspace{.076cm}}{\raisebox{.018cm}{\scalebox{.51}{$|$}}},-{\raisebox{.06cm}{\scalebox{.4}{{\bf $|$}}}}{\raisebox{.13cm}{$_\llcorner$}\hspace{-.12cm}}-}
\quad\quad\quad\quad\quad\quad
\ytableaushort{---,--,--}\]
\caption{The  slinky correspondence showing $s_{(1,3,6)}= (-1)^3 s_{(4,3,3)}=-s_{(4,3,3)}$ and $s_{(2,2,3)}=0$.}\label{slinky correspondence}
\end{figure}

The following relationship between fundamental quasisymmetric functions and composition Schur functions will prove crucial for our purposes.

\begin{lem}[\citep{egge2010quasisymmetric}]\label{comp Schur fund}
If $f$ is a symmetric function such that 
\begin{align}
f=\sum_{\alpha \vDash n} c_\alpha F_\alpha, \quad \textup{then} \quad f=\sum_{\alpha \vDash n} c_\alpha s_\alpha,
\end{align}
\noindent where $c_\alpha$ are constants.
\end{lem}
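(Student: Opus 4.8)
**Proof proposal for Lemma (egge2010quasisymmetric): $f = \sum c_\alpha F_\alpha$ symmetric implies $f = \sum c_\alpha s_\alpha$.**

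The plan is to exploit the fact that the fundamental quasisymmetric functions $\{F_\alpha : \alpha \vDash n\}$ form a basis of the quasisymmetric functions of degree $n$, so the expansion $f = \sum_{\alpha} c_\alpha F_\alpha$ is \emph{unique}. The strategy is then to produce a known expansion of $f$ in the $F_\alpha$ coming from its Schur expansion, and compare coefficients. Since $f$ is symmetric, by definition (as used in this paper) we may write $f = \sum_{\lambda \vdash n} a_\lambda s_\lambda$ for unique constants $a_\lambda$. By Gessel's formula \eqref{Schur def}, each $s_\lambda = \sum_{T \in \SYT(\lambda)} F_{\beta(T)}$ (rewriting $\ID(T)$ via the composition $\beta(T)$), so $f = \sum_{\lambda} a_\lambda \sum_{T\in\SYT(\lambda)} F_{\beta(T)}$, which expresses $f$ in the fundamental basis. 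Matching this against $f = \sum_\alpha c_\alpha F_\alpha$ gives, for every composition $\alpha \vDash n$,
\[
c_\alpha = \sum_{\lambda \vdash n} a_\lambda \cdot \#\{T \in \SYT(\lambda) : \beta(T) = \alpha\}.
\]

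The remaining step is to recognize the right-hand side as exactly the coefficient of $F_\alpha$ one would get by expanding $\sum_\lambda a_\lambda s_\alpha$ — but that does not literally make sense, since $s_\alpha$ is a single (signed) Schur function, not a sum over shapes. So instead I would argue directly that $\sum_\alpha c_\alpha s_\alpha = \sum_\lambda a_\lambda s_\lambda = f$. The key identity needed is: for each fixed partition $\lambda$,
\[
\sum_{\alpha \vDash n} \#\{T \in \SYT(\lambda) : \beta(T) = \alpha\}\, s_\alpha \;=\; s_\lambda,
\]
equivalently $\sum_{T \in \SYT(\lambda)} s_{\beta(T)} = s_\lambda$. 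Granting this, we compute
\[
\sum_{\alpha} c_\alpha s_\alpha = \sum_\alpha \Bigl(\sum_\lambda a_\lambda\, \#\{T \in \SYT(\lambda): \beta(T)=\alpha\}\Bigr) s_\alpha = \sum_\lambda a_\lambda \sum_{T\in\SYT(\lambda)} s_{\beta(T)} = \sum_\lambda a_\lambda s_\lambda = f,
\]
which is the claim.

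So the real content is the identity $\sum_{T \in \SYT(\lambda)} s_{\beta(T)} = s_\lambda$, and I expect this to be the main obstacle — though it is a clean statement amenable to the slinky correspondence described just above the lemma. One approach: apply $\sum_{T\in\SYT(\lambda)}(\cdot)$ of Gessel's formula to a \emph{single} $s_\mu$ and use that the $F_\alpha$ appearing are linearly independent; but more robustly, I would prove $\sum_{T\in\SYT(\lambda)} s_{\beta(T)} = s_\lambda$ by induction on the dominance order (or on $n$) using the fact that each $s_{\beta(T)}$ is either $0$ or $\pm s_\nu$ via the slinky rule, together with Gessel's expansion of $s_\nu$ itself, so that cross-terms telescope. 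Alternatively — and this is likely the cleanest — one cites that $\{s_\alpha : \alpha \vDash n\}$ and $\{F_\alpha\}$ are related by the same unitriangular-with-signs change of basis as $\{s_\lambda\}$ to $\{F_\alpha\}$ restricted appropriately, a fact implicit in the slinky correspondence; then uniqueness of the $F$-expansion of the symmetric function $f$ forces the two signed sums to agree. A subtle point to handle carefully: one must ensure the manipulation "$f$ symmetric $\Rightarrow f = \sum a_\lambda s_\lambda$" is legitimate, which here is immediate since in this paper symmetric functions of degree $n$ are \emph{defined} as the span of $\{s_\lambda : \lambda\vdash n\}$.
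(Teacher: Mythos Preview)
The paper does not supply its own proof of this lemma; it is quoted from \citep{egge2010quasisymmetric} and used as a black box. So there is no in-paper argument to compare against.

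Your reduction is correct: writing $f=\sum_\lambda a_\lambda s_\lambda$, expanding each $s_\lambda$ by Gessel, and matching coefficients does reduce the lemma to the single identity
\[
\sum_{T\in\SYT(\lambda)} s_{\beta(T)} \;=\; s_\lambda
\]
for every partition $\lambda$. The gap is that you do not prove this identity. The induction-on-dominance sketch and the ``unitriangular-with-signs'' remark are both too vague to stand as arguments; in particular, the second one is close to assuming what you want.

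It is worth noting, though, that this identity is exactly the special case $\mc=\SYT(\lambda)$ of the paper's own Lemma~\ref{beta to lambda}: $\SYT(\lambda)$ is a union of $\equiv^0$ classes (indeed a single dual equivalence class) containing precisely one superstandard tableau, $U_\lambda$. The proof of Lemma~\ref{beta to lambda} rests only on the sign-reversing involution $\slink_*$ (Lemmas~\ref{big slink} and~\ref{0 classes}) and does \emph{not} invoke Lemma~\ref{comp Schur fund}, so there is no circularity. In other words, your outline together with the paper's $\slink_*$ involution yields a complete, self-contained proof that the paper could have given in place of the citation; what is missing from your proposal is exactly that involution (or some other concrete mechanism) to establish the key identity.
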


\subsection{The equivalence relations on tableaux}
 
 \begin{dfn} \label{slink} \textup{
Consider $T\in\SYT(\lambda)$. Define $\slink(T)$  and $\slink_*(T)$ as follows. 
\begin{itemize}
\item Let $j$ be the minimal number such that the first $j$ runs of $T$ do not form a superstandard tableau. If such a $j$ does not exist $(T=U_\lambda)$, then $\slink$ and $\slink_*$ act as the identity.
\item Construct $\slink(T)\in \SYT(\lambda)$ by permuting the cells of the $j-1$ and $j^{th}$ runs that occur below the $j^{th}$ row, giving the first $\beta_{j}(T)-1$ of these cells to the $j-1^{th}$ run. In particular,
\begin{equation}\label{beta slink}
\beta_{j-1,j}(\slink(T))=(\beta_j(T)-1,\beta_{j-1}(T)+1).
\end{equation}
\item Let $\mu$ be the shape of the first $j$ runs of $T$, and let $i$ be the minimal number such that
 \begin{equation}\label{mu condition}
\mu_{i+1}(T) \leq \beta_{j}(T)+i-j.
 \end{equation}
\item Construct $\slink_*(T)\in \SYT(\lambda)$ by permuting the cells of the $i^{th}$ and $j^{th}$ runs that occur below the $j^{th}$ row, giving the first $\beta_{j}(T)+i-j$ of these cells the $i^{th}$ run. In particular,
\begin{equation}\label{ij star}
\beta_{i,j}(\slink_*(T))=(\beta_j(T)+i-j,\beta_i(T)+j-i).
\end{equation}
\end{itemize}
}
\end{dfn}
%
%
%
%
\noindent Notice that the definition above refers to permuting cells in runs, dictating what the values in those cells will have to be. See Figure~\ref{slink example} for examples.
A few ramifications of this definition should be pointed out. 
%
\begin{lem}\label{slink consequence}
For any partition $\lambda$ and $T\in\SYT(\lambda)$, the following hold.
\begin{enumerate}
\item $\slink(T)$ and $\slink_*(T)$ are well defined and in $\SYT(\lambda)$.
\item\label{slink sign} If $T\neq U_\lambda$, then $s_{\beta(\slink(T))}= -s_{\beta(T)}$.
\item If $i=j-1$ in Definition~\ref{slink}, then $\slink(T)=\slink_*(T)$
\end{enumerate}
\end{lem}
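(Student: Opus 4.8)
The plan is to verify the three claims in order, working directly from Definition~\ref{slink} and using the combinatorial bookkeeping provided by runs and the slinky correspondence (Lemma~\ref{comp Schur fund} and the sign rule for composition Schur functions).

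\textbf{Part 1 (well-definedness).} First I would check that the index $i$ in the definition of $\slink_*$ actually exists, i.e.\ that some row index satisfies \eqref{mu condition}. Taking $i = j$ gives the condition $\mu_{j+1}(T) \le \beta_j(T)$; since $\mu$ is the shape of the first $j$ runs and the $j^{th}$ run contributes $\beta_j(T)$ cells spread across rows $1$ through $j$ (using that the first $j-1$ runs already form a superstandard tableau by minimality of $j$, so they occupy exactly rows $1,\dots,j-1$ in a predictable way), one sees row $j+1$ of $\mu$ has at most $\beta_j(T)$ cells — actually this needs a short argument about how the $j^{th}$ run sits on top of the superstandard tableau formed by the first $j-1$ runs. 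Then I must confirm that after permuting the designated cells, the result is still a valid standard Young tableau of shape $\lambda$: the key point is that the cells being redistributed all lie strictly below row $j$, the two runs involved are consecutive in value within each, and column-strictness/row-strictness is preserved because we are handing over a prefix (in reading order, i.e.\ bottom rows first) of a contiguous block of cells. I expect this to require invoking the structure of runs on top of a superstandard base rather than a one-line check.

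\textbf{Part 2 (the sign).} Given Part 1, this is the cleanest piece: by \eqref{beta slink}, $\beta(\slink(T))$ is obtained from $\beta(T)$ by replacing the consecutive pair $(\beta_{j-1}(T), \beta_j(T))$ with $(\beta_j(T)-1, \beta_{j-1}(T)+1)$, which is exactly the elementary move $(\alpha_i,\alpha_{i+1}) \mapsto (\alpha_{i+1}-1, \alpha_i+1)$ appearing in the discussion of composition Schur functions before Lemma~\ref{comp Schur fund}. That discussion states precisely that such a swap sends $s_\alpha$ to $-s_\beta$, so $s_{\beta(\slink(T))} = -s_{\beta(T)}$. The only thing to note is that $T \ne U_\lambda$ guarantees the index $j$ exists so that $\slink(T) \ne T$ and the statement is non-vacuous.

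\textbf{Part 3 ($i = j-1$ forces $\slink = \slink_*$).} Here I would substitute $i = j-1$ into \eqref{ij star}: then $\beta_{i,j}(\slink_*(T)) = (\beta_j(T) + (j-1) - j,\ \beta_{j-1}(T) + j - (j-1)) = (\beta_j(T)-1,\ \beta_{j-1}(T)+1)$, which matches \eqref{beta slink} for $\slink(T)$ exactly. Since both operations only permute cells of the $(j-1)^{th}$ and $j^{th}$ runs lying below row $j$, and in this case they assign the same number of cells ($\beta_j(T)-1$) to the $(j-1)^{th}$ run from the same pool of cells taken in the same (reading-word) order, they produce the identical filling. The minor obligation is to observe that when $i = j-1$ the pool of "cells below row $j$" for the pair $(j-1,j)$ is the same set in both definitions, which is immediate since the runs are the same.

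\textbf{Main obstacle.} The substantive work is entirely in Part 1 — specifically, proving that $\slink(T)$ and $\slink_*(T)$ are genuinely standard Young tableaux of shape $\lambda$, because this requires understanding the shape of the first $j$ runs of $T$ (that the first $j-1$ form a superstandard tableau, hence a staircase-like base) and arguing that redistributing a bottom-loaded prefix of the sub-diagonal cells between two consecutive runs keeps all rows weakly decreasing in length and all columns strictly increasing in value. Parts 2 and 3 are then short consequences of the explicit $\beta$-formulas \eqref{beta slink} and \eqref{ij star} together with the sign rule for $s_\alpha$.
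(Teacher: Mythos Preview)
Your proposal is correct and takes exactly the approach the paper intends: the paper's own proof reads, in its entirety, ``The proofs are direct consequences of the definition and are recommended for the reader,'' and you have simply carried out that recommended exercise, using \eqref{beta slink}, \eqref{ij star}, and the slinky sign rule for $s_\alpha$ in the expected way. (One small note: the sign rule you invoke for Part~2 is the discussion of composition Schur functions preceding Figure~\ref{slinky correspondence}, not Lemma~\ref{comp Schur fund} itself, which concerns replacing $F_\alpha$ by $s_\alpha$.)
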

\begin{proof}
The proofs are direct consequences of the definition and are recommended for the reader. 
\end{proof}

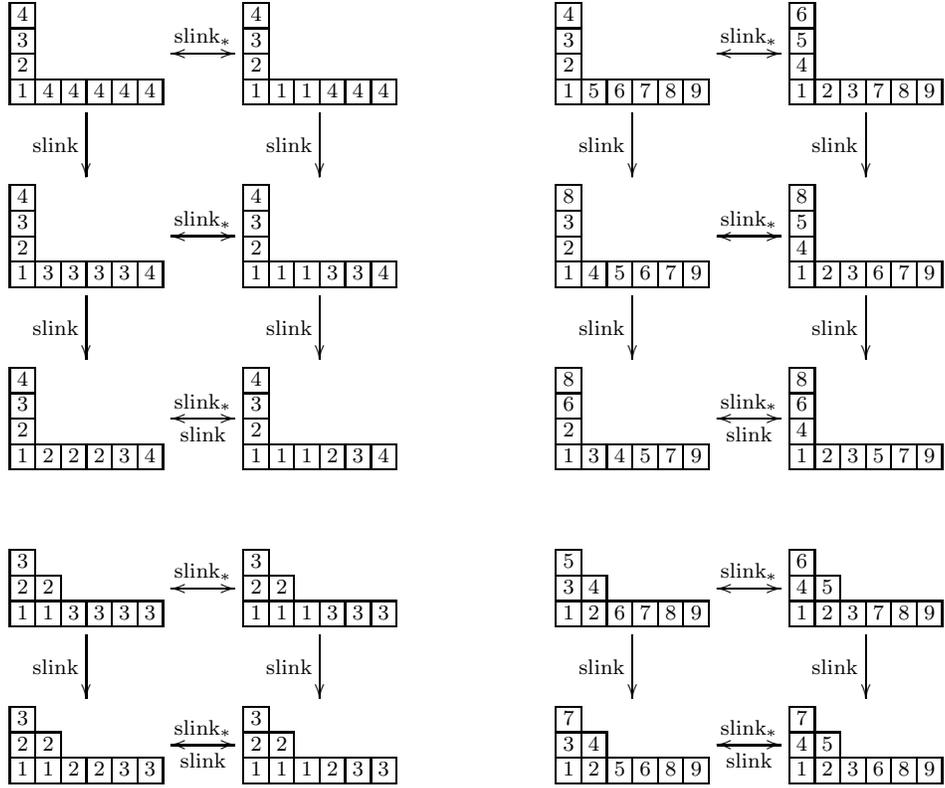
\begin{figure}[h]
  \ytableausetup{smalltableaux,aligntableaux=center}
\[
  \xymatrix{ 
    \ytableaushort{4, 3, 2, 144444} \ar@{<->}[r]^{\slink_*} \ar@{->}[d]_{\slink} & \ytableaushort{4, 3,2, 111444}  \ar@{->}[d]_{\slink} &&
     \ytableaushort{4, 3, 2, 156789} \ar@{<->}[r]^{\slink_*} \ar@{->}[d]_{\slink}& \ytableaushort{6, 5, 4, 123789}  \ar@{->}[d]_{\slink}
     \\
      \ytableaushort{4, 3, 2, 133334}  \ar@{<->}[r]^{\slink_*} \ar@{->}[d]_{\slink}  & \ytableaushort{4, 3, 2, 111334}  \ar@{->}[d]_{\slink}&&
      \ytableaushort{8, 3, 2, 145679}  \ar@{<->}[r]^{\slink_*} \ar@{->}[d]_{\slink}  & \ytableaushort{8, 5, 4, 123679}  \ar@{->}[d]_{\slink}
      \\
  \ytableaushort{4, 3, 2, 122234}  \ar@{<->}[r]^{\slink_*}_{\slink} & \ytableaushort{4, 3, 2, 111234} &&
  \ytableaushort{8, 6, 2, 134579}  \ar@{<->}[r]^{\slink_*}_{\slink} & \ytableaushort{8, 6, 4, 123579}  
  \\
    \ytableaushort{3, 22, 113333} \ar@{<->}[r]^{\slink_*} \ar@{->}[d]_{\slink} &\ytableaushort{3, 22, 111333}  \ar@{->}[d]_{\slink}&&
      \ytableaushort{5, 34, 126789} \ar@{<->}[r]^{\slink_*} \ar@{->}[d]_{\slink} &\ytableaushort{6, 45, 123789}  \ar@{->}[d]_{\slink}
     \\
   \ytableaushort{3, 22, 112233}  \ar@{<->}[r]^{\slink_*}_{\slink} & \ytableaushort{3, 22, 111233}& &
   \ytableaushort{7, 34, 125689}  \ar@{<->}[r]^{\slink_*}_{\slink} & \ytableaushort{7, 45, 123689} 
   \\
 }
  \]
\caption{Two equivalence classes of $\equiv^{1}$ using $\SYT(\lambda)$ on the right and their representation in UYT$(\lambda)$ on the left. The $\equiv^0$ classes are the vertex sets of edges labeled $\slink_*$.}\label{slink example}
\end{figure}

We may now define two equivalence classes that will motivate much of the rest of this paper. 

\begin{dfn}\label{def of equiv}
\textup{
Define $\equiv^{0}$ and $\equiv^1$ as the equivalence relations on standard Young tableaux generated by $\slink_*$ and $\slink$, respectively.
}
\end{dfn}

\noindent
Examples of equivalence classes can be found in Figure~\ref{slink example}. The relationships between $\slink_*$ and $\equiv^1$ are further illuminated by the following lemmas.

\begin{lem}\label{ij}
For any partition $\lambda$ and $T\in \SYT(\lambda)$ such that $T\neq U_\lambda$, let $i$ and $j$ be as in Definition~\ref{slink}. 
\begin{enumerate} 
\item\label{ij1} If $i=j-1$, then $\slink$ fixes $i$ and $j$,
\item\label{ij2} If $i<j-1$, then $\slink$ fixes $i$ and lowers $j$ by 1,
\item\label{ij3} $\slink_*$ fixes $i$ and $j$.
\end{enumerate}
\end{lem}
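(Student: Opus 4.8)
\textbf{Proof plan for Lemma~\ref{ij}.}

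The plan is to analyze how the index $j$ (the minimal number such that the first $j$ runs of $T$ fail to form a superstandard tableau) and the index $i$ (the minimal number satisfying \eqref{mu condition}) are affected by applying $\slink$. First I would observe that $\slink$ only rearranges the cells of the $j-1^{st}$ and $j^{th}$ runs lying strictly below the $j^{th}$ row, and leaves every other run — in particular all runs with index $< j-1$ — completely untouched. Consequently the first $j-2$ runs of $\slink(T)$ coincide with those of $T$, and the union of the first $j-1$ runs of $\slink(T)$ equals the union of the first $j-1$ runs of $T$ as a set of cells (only the internal partition between run $j-1$ and run $j$ changes). Since "the first $m$ runs form a superstandard tableau" depends only on the underlying shape (equivalently the cell set) of the first $m$ runs, the first $j-1$ runs of $\slink(T)$ again fail to be superstandard exactly when those of $T$ do, and the first $m$ runs of $\slink(T)$ for $m \le j-2$ are superstandard iff those of $T$ are. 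This shows the minimal failing index of $\slink(T)$ is again at most $j$ and at least... here I must be careful.

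For part (\ref{ij1}), suppose $i = j-1$. By Lemma~\ref{slink consequence}(3) this is precisely the case $\slink(T) = \slink_*(T)$, so it suffices to handle the $\slink_*$ statement (\ref{ij3}) and then transfer. Alternatively, and more directly, when $i=j-1$ the cells being permuted are those of runs $i=j-1$ and $j$ below row $j$, and after the move \eqref{beta slink} gives $\beta_{j-1}(\slink(T)) = \beta_j(T)-1$ and $\beta_j(\slink(T)) = \beta_{j-1}(T)+1$. I would check that the shape of the first $j-1$ runs of $\slink(T)$ is now superstandard (this is exactly why $i$ is chosen minimal with \eqref{mu condition} — the receiving run $i$ gets just enough cells so that the first $i$ runs become a rectangle-completing superstandard shape), while the first $j$ runs still do not, so the minimal failing index of $\slink(T)$ is still $j$; and since runs $< j-1$ are unchanged, $i$ is unchanged as well. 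The main obstacle here is verifying carefully that adding the prescribed cells to run $i$ really does make the first $i$ runs superstandard and does not accidentally make the first $j$ runs superstandard — this requires unpacking condition \eqref{mu condition} together with the definition of $U_\mu$ (fill lowest rows first).

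For part (\ref{ij2}), suppose $i < j-1$. Now $\slink$ moves cells only between runs $j-1$ and $j$, so runs $1,\dots,j-2$ are untouched; in particular run $i$ (with $i \le j-2$) is fixed, giving the first half of (\ref{ij2}). For the claim that $j$ drops by $1$: after applying $\slink$, equation \eqref{beta slink} shrinks run $j-1$ and grows run $j$. I would argue that the first $j-1$ runs of $\slink(T)$ now \emph{do} form a superstandard tableau — this is the content of why $\slink$ is defined via the parameter from \eqref{mu condition}: giving run $j-1$ exactly $\beta_j(T)-1$ of the low cells fills in the next available low row of the partially-built superstandard shape — while the first $j-2$ runs already fail to be superstandard is false (they must be superstandard, since $j$ was minimal), so the minimal failing index of $\slink(T)$ becomes $j-1 < j$, i.e.\ $j$ is lowered by $1$. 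Part (\ref{ij3}) is handled in the course of the above: $\slink_*$ permutes only cells of runs $i$ and $j$ below row $j$, leaving runs $1, \dots, i-1$ and $i+1, \dots, j-1$ and everything above untouched, so the cell sets of the first $m$ runs for $m < i$ and the cell set of the first $i$ runs... wait, the first $i$ runs change. Rather, I would note that the first $i-1$ runs are unchanged and the union of the first $j$ runs is unchanged as a cell set, and by \eqref{ij star} the shape of the first $i$ runs of $\slink_*(T)$ is superstandard (again by minimality of $i$ in \eqref{mu condition}) — no wait, I need the first $i$ runs of $\slink_*(T)$ to be tested: actually $\slink_*$ should \emph{not} fix $i$ and $j$ trivially; rather the point is that re-running Definition~\ref{slink} on $\slink_*(T)$ reproduces the same $i$ and same $j$. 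I would verify this by checking that for $\slink_*(T)$ the first $j-1$ runs still fail to be superstandard (the problematic cell configuration at run $j$ relative to the rows above persists because only cells at or below row $j$ were moved and the total shape of the first $j$ runs is preserved) so the minimal failing index is still $j$, and then that $i$ recomputed from \eqref{mu condition} for $\slink_*(T)$ is unchanged because $\mu$ (shape of first $j$ runs) and the inequality depend only on data preserved by $\slink_*$.

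The step I expect to be the genuine obstacle is the bookkeeping that the specific number of cells transferred — $\beta_j(T)-1$ for $\slink$ and $\beta_j(T)+i-j$ for $\slink_*$ — is exactly what is needed so that the shape of the first $j-1$ runs (resp. first $i$ runs) becomes superstandard, no more and no less; this is where one must combine the minimality of $i$ in \eqref{mu condition}, the inequality $\mu_{i+1}(T) \le \beta_j(T)+i-j$, and the explicit "lowest row first" construction of $U_\mu$, and it is the crux on which all three parts rest.
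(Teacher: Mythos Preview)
Your proposal contains a genuine error in Part~(\ref{ij2}) that inverts the key direction of the argument. You write that after applying $\slink$ when $i<j-1$, ``the first $j-1$ runs of $\slink(T)$ now \emph{do} form a superstandard tableau,'' and then conclude that ``the minimal failing index of $\slink(T)$ becomes $j-1$.'' These two assertions contradict each other: by definition, $j$ is the \emph{minimal} index whose first $j$ runs fail to be superstandard, so for the new value of $j$ to equal $j-1$ one needs the first $j-1$ runs of $\slink(T)$ to \emph{not} be superstandard. And indeed that is what happens: since $i<j-1$, condition~\eqref{mu condition} gives $\mu_i(T)\le \beta_j(T)+i-j<\beta_j(T)-1=\beta_{j-1}(\slink(T))$, so the new $(j-1)^{\text{st}}$ run is too long to sit entirely in rows $i,\dots,j-1$ and must occupy at least one cell strictly below row $i$. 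That is precisely what makes the first $j-1$ runs of $\slink(T)$ fail to be superstandard. Your intuition that ``giving run $j-1$ exactly $\beta_j(T)-1$ of the low cells fills in the next available low row'' is the phenomenon that occurs in Part~(\ref{ij1}), not here.

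A related slip appears earlier: you assert that ``the union of the first $j-1$ runs of $\slink(T)$ equals the union of the first $j-1$ runs of $T$ as a set of cells.'' This is false; $\slink$ moves cells between runs $j-1$ and $j$, so it is the union of the first $j$ runs that is preserved, not the first $j-1$. Finally, your argument that ``run $i$ is fixed, giving the first half of~(\ref{ij2})'' is not sufficient on its own: the index $i$ is defined by the inequality~\eqref{mu condition}, which involves $\mu$ and $\beta_j$ for the \emph{new} value of $j$. One must re-verify~\eqref{mu condition} at both $i$ and $i-1$ with $j$ replaced by $j-1$ and $\beta_j$ replaced by $\beta_{j-1}(\slink(T))=\beta_j(T)-1$; the paper does exactly this, and the equalities line up because $\mu_{i}$ and $\mu_{i+1}$ are unchanged.
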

\begin{proof} Let $T, i$ and $j$ be as in the statement of the Lemma. Let $\mu$ be as in Definition~\ref{slink}, where we use $\mu(T), \mu(\slink(T))$, and $\mu(\slink_*(T))$ to differentiate the possibly different shapes associated to each tableau. Consider the three parts of the lemma in order.

\noindent \emph{Part~\ref{ij1}}: 
Assume $i=j-1$. By Part~3 of Lemma~\ref{slink consequence}, $\slink(T)=\slink_*(T)$. The result is thus a special case of Part~\ref{ij3}, which is proven below.

\noindent \emph{Part~\ref{ij2}}:
Assume $i<j-1$. By (\ref{mu condition}) and (\ref{beta slink}),
\begin{equation}
\mu_i(T)\leq \beta_j(T)+i-j< \beta_j(T)-1=\beta_{j-1}(\slink(T)).
\end{equation}
Thus, the $j-1^{th}$ run of $\slink(T)$ must take all of the cells of the $j^{th}$ run of $T$ that occur weakly between the $i$ and $j-1^{th}$ rows, as well as at least one cell below the $i^{th}$ row. Hence, the first $j-1$ runs of $\slink(T)$ do not form a superstandard tableau, and the $j$ value must be lowered by 1. 
Further, $\mu_{i,i+1}(T)=\mu_{i,i+1}(\slink(T))$. 

To show that $\slink$ fixes $i$, it suffices to use (\ref{beta slink}) to inspect (\ref{ij star}) for both $i$ and $i-1$,
\begin{align}
\begin{split}
\mu_{i+1}(\slink(T))= & 
\mu_{i+1}(T) 
\leq  \beta_{j}(T)+i-j \\
=&  \beta_{j-1}(\slink(T))+i-(j-1),
\end{split}
\end{align}
\begin{align}
\begin{split}
\mu_{i}(\slink(T))= & 
\mu_{i}(T) 
>  \beta_{j}(T)+(i-1)-j \\
=&  \beta_{j-1}(\slink(T))+(i-1)-(j-1).
\end{split}
\end{align}
Thus, $i$ is still the minimal integer satisfying (\ref{mu condition}).

\noindent \emph{Part~\ref{ij3}}: If $\slink_*(T)=T$, the result is trivial. Assume otherwise. We start by showing that $\slink_*$ does not change $j$. 
By (\ref{ij star}) and (\ref{mu condition}),
\begin{equation}
\beta_j(\slink_*(T))=\beta_i(T)+j-i >\beta_i(T) \geq \beta_{j-1}(T)\geq \mu_j(T),
\end{equation}
so the $j^{th}$ run of $\slink_*(T)$ must have cells below the $j^{th}$ row. That is, the first $j$ runs of $\slink_*(T)$ do not form a superstandard tableau. To prove that the first $j-1$ runs of $\slink_*(T)$ form a superstandard tableau, consider two cases. 

First,
suppose that $\slink_*$ increases the size of the $i^{th}$ run of $T$. In this case, we need to show that there are no new cells of the $i^{th}$ run strictly below the $i^{th}$ row, guaranteeing that the first $j-1$ runs comprise a superstandard tableau. Suppose the contrary. By the definition of $\slink_*$, the $i^{th}$ run of $\slink_*(T)$ would have to contain $\mu_i(T)$ cells in the $i^{th}$ row as well as at least one more cell in a lower row. Using this fact and (\ref{ij star}), 
\begin{equation}
\mu_i \leq \beta_i(\slink_*(T))-1=\beta_{j}(T)+(i-1)-j.
\end{equation}
In particular, $i$ was not chosen minimally, providing the desired contradiction. 

Second, suppose $\slink_*$ decreases the size of the $i^{th}$ run. The first $j-1$ runs of $\slink_*(T)$ are then the result of removing cells from the $i^{th}$ run of a superstandard tableaux. Applying Part~1 of Lemma~\ref{slink consequence}, this must still be a standard Young tableaux. Hence, the first $j-1$ runs of $\slink_*(T)$ form a standard Young tableaux while the first $j$ runs do not. That is, $\slink_*$ preserves $j$.


We still need to show that $\slink_*$ does not change $i$. We proceed by checking that $i$ is the minimal number satisfying (\ref{mu condition}) for $\slink_*(T)$. Since $\slink_*$ preserves $j$ and permutes the cells of the first $j$ runs, it must also preserve $\mu(T)$. Further, $\mu_{i+1}(T)$ must be weakly less than $\beta_i$, since the first $j-1$ runs of $T$ comprise a superstandard tableau. Hence
\begin{align}
\mu_{i+1}(\slink_*(T))=\mu_{i+1}(T) \leq \beta_i(T)=\beta_j(\slink_*(T))+i-j. 
\end{align}

\noindent
To show that $i$ is still minimal, we proceed by contradiction. Suppose that $i-1$ also satisfies (\ref{mu condition}) for $\slink_*(T)$. Then
\begin{eqnarray}
\begin{split}
\mu_{i}(T) \geq & \beta_i(T)  
 =\beta_j(\slink_*(T))+i-j   \\
 >& \beta_j(\slink_*(T))+(i-1)-j   
\geq \mu_i(\slink_*(T)) \\
\geq& \beta_i(\slink_*(T))
 = \beta_{j}(T)+i-j. 
\end{split}
\end{eqnarray}
Hence, $i$ was not the minimal choice satisfying (\ref{mu condition}) for $T$. 
This contradiction concludes the proof of Part~\ref{ij3}.
\end{proof}

\begin{prop}\label{star involution}
For any $T\in \SYT(n)$, $\slink_* \circ \slink_*(T)=T$.
\end{prop}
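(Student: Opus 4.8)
The plan is to present $\slink_*$, on any tableau other than $U_\lambda$, as an operation controlled by a single integer parameter, and then to check that applying it twice sends that parameter --- and hence the whole tableau --- back to where it started. If $T=U_\lambda$ then $\slink_*$ acts as the identity and there is nothing to do, so assume $T\neq U_\lambda$ and let $i$, $j$, and $\mu$ be as in Definition~\ref{slink}.

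First I would collect the invariance properties of $\slink_*$. By Part~\ref{ij3} of Lemma~\ref{ij}, $\slink_*$ fixes both $i$ and $j$, so in particular $\slink_*(T)\neq U_\lambda$; the argument there also shows that $\slink_*$ preserves the shape $\mu$ of the first $j$ runs, and directly from Definition~\ref{slink} it leaves every run other than the $i$-th and $j$-th unchanged and also leaves the cells of the $j$-th run lying weakly above the $j$-th row unchanged. Since the first $j-1$ runs of $T$ (and likewise of $\slink_*(T)$, which has the same $j$) form a superstandard tableau --- whose $k$-th run is its $k$-th row, so that it has as many rows as runs --- the cells of the first $j$ runs that lie strictly below the $j$-th row are exactly the cells of the diagram of $\mu$ in rows $1,\dots,j-1$; and the set $\qs$ of cells of the $i$-th and $j$-th runs below the $j$-th row is obtained from this diagram by deleting the (unchanged) cells of the runs $1,\dots,j-1$ other than the $i$-th. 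Hence $\qs$, together with the order in which its cells are traversed, is literally the same for $T$ and for $\slink_*(T)$. It follows that $\slink_*$ acts on $T$ only by re-partitioning $\qs$ between the $i$-th and $j$-th runs, with the outcome completely determined by the number $m=\beta_j(T)+i-j$ of cells of $\qs$ it assigns to the $i$-th run; write $\rho(m)$ for this outcome, and note that $\rho$ is injective because distinct values of $m$ give distinct run sizes.

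Next I would run the parameter through two applications. By (\ref{ij star}), $\slink_*(T)=\rho(m)$ has $\beta_j(\rho(m))=\beta_i(T)+j-i$, so a second application of $\slink_*$ produces $\rho(m')$ with
\[
m'=\beta_j(\slink_*(T))+i-j=(\beta_i(T)+j-i)+i-j=\beta_i(T).
\]
It then remains to identify $\rho(\beta_i(T))$ with $T$. Because the first $j-1$ runs of $T$ form a superstandard tableau, the $i$-th run of $T$ is exactly the leftmost $\beta_i(T)$ cells of row $i$; and, by the construction in Definition~\ref{slink}, the $i$-th run of $\rho(m)$ consists of the leftmost $m$ cells of row $i$ --- here one uses $m\le\mu_i$, which follows from the minimality of $i$ in (\ref{mu condition}) --- so with $m=\beta_i(T)$ this is precisely the $i$-th run of $T$, while the remaining cells of $\qs$ are forced into the $j$-th run, recovering the $j$-th run of $T$ as well. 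Therefore $\slink_*\circ\slink_*(T)=\rho(m')=\rho(\beta_i(T))=T$.

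The step I expect to be the main obstacle is this last identification: one must know that feeding the $\slink_*$-recipe the \emph{current} size of the $i$-th run reproduces $T$ rather than some other tableau with the same run sizes. The crux is that the $i$-th run always sits as an initial segment of its own row --- for $T$ this is superstandardness of the first $j-1$ runs, while for the intermediate tableau $\rho(m)$ it is the inequality $\mu_i\ge\beta_j(T)+i-j$ coming from the minimality of $i$, which keeps the cells moved into the $i$-th run from spilling out of row $i$. Granting this, along with the invariances recorded above and the well-definedness furnished by Part~1 of Lemma~\ref{slink consequence}, the parameter computation completes the proof.
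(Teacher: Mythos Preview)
Your proof is correct and follows essentially the same route as the paper's: invoke Part~\ref{ij3} of Lemma~\ref{ij} to fix $i$ and $j$, and then observe that the reassignment of the $i$-th and $j$-th runs prescribed by (\ref{ij star}) undoes itself on a second application. Your extra care in verifying that the second application actually reproduces $T$---not merely the run sizes $\beta_i,\beta_j$---fills in a detail the paper leaves implicit.
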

\begin{proof}
If $\slink_*$ acts as the identity, then the result it trivial, so assume otherwise.
Part~\ref{ij3} of Lemma~\ref{ij} guarantees that $\slink_*$ fixes $i$ and $j$  in Definition~\ref{slink}. The permutation of runs described in (\ref{ij star}) is thus an involution, completing the proof.
\end{proof}

\begin{lem}\label{slink commute} 
For any $T\in\SYT(n)$, 
$\slink\circ \slink_{*}(T) = \slink_{*}\circ\slink(T)$.
\end{lem}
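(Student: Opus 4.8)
The plan is to reduce everything to the data $(i,j)$ attached to $T$ by Definition~\ref{slink}, since all three operators $\slink$, $\slink_*$ only rearrange cells of runs that lie below the $j$-th row. First I would dispose of the degenerate cases: if $\slink_*$ (equivalently $\slink$) acts as the identity on $T$, i.e. $T=U_\lambda$, both sides equal $U_\lambda$ and there is nothing to prove; and if $i=j-1$ then by Part~3 of Lemma~\ref{slink consequence} we have $\slink(T)=\slink_*(T)$, so the claim becomes $\slink\circ\slink_*(T)=\slink_*\circ\slink_*(T)$, which follows from Proposition~\ref{star involution} once we check $\slink(U_\lambda)=U_\lambda=\slink_*(U_\lambda)$ — but more carefully, $\slink_*\circ\slink_*(T)=T$ and we must verify $\slink\circ\slink_*(T)=T$ as well in this case; I would handle this by observing $\slink_*(T)$ now has its first $j$ runs forming a superstandard tableau (by the analysis in Part~\ref{ij3}), but its first $j'$ runs for the relevant $j'$… actually the cleanest route is: when $i=j-1$, $\slink_*(T)$ is the tableau whose $(i,j)$-runs satisfy $\beta_{i,j}(\slink_*(T))=(\beta_j(T)-1,\beta_i(T)+1)$, and one checks directly that applying $\slink$ to this returns $T$.

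The main case is $i<j-1$. Here I would use Lemma~\ref{ij}: Part~\ref{ij2} says $\slink$ fixes $i$ and sends $j\mapsto j-1$, while Part~\ref{ij3} says $\slink_*$ fixes both $i$ and $j$. So on the left-hand side, $\slink_*(T)$ still has distinguished indices $(i,j)$, and then $\slink(\slink_*(T))$ acts on runs $i$ and $j$ of $\slink_*(T)$. On the right-hand side, $\slink(T)$ has distinguished indices $(i,j-1)$, and then $\slink_*(\slink(T))$ acts on runs $i$ and $j-1$ of $\slink(T)$. In both orders the operators only ever touch cells of runs $i, j-1, j$ that lie below the $j$-th row, and they never move any cell into or out of the $j$-th-or-higher rows (this is exactly what the proof of Lemma~\ref{ij} establishes, by controlling $\mu$). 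So I would fix the total collection $S$ of cells below the $j$-th row belonging to runs $i, j-1, j$ of $T$; both compositions redistribute precisely these cells among runs $i, j-1, j$, filling them greedily from the bottom-left. It therefore suffices to compare the triple of run-sizes $(\beta_i,\beta_{j-1},\beta_j)$ restricted to $S$ after each composition, since a greedy bottom-up fill of a fixed cell-set is determined by the ordered sizes. Using (\ref{beta slink}) and (\ref{ij star}) twice, the left side gives $\beta_{j-1}$ getting $(\beta_j(\slink_*(T))-1)=(\beta_j(T)+j-i-1)$ cells and so on; the right side gives the analogous count starting from $(i,j-1)$. Carrying out this bookkeeping — tracking how many of the $S$-cells end up in each of the three runs, and in which rows — and seeing that the two answers coincide is the substance of the argument.

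The step I expect to be the main obstacle is precisely this bookkeeping: verifying that the two compositions produce the same distribution of cells, including the subtle point that after the first operator is applied the indices $i$ and $j$ (or $j-1$) are still the minimal ones satisfying the relevant conditions, so that the second operator's behavior is governed by the same inequalities. Much of this is already packaged in Lemma~\ref{ij}, but one still has to check that $\slink$ applied to $\slink_*(T)$ has $i$ as its distinguished lower index and $j$ as its distinguished upper index — which requires re-examining condition (\ref{mu condition}) for $\slink_*(T)$, using that $\slink_*$ preserves $\mu$ and the run sizes involved — and symmetrically that $\slink_*$ applied to $\slink(T)$ sees indices $(i,j-1)$, using Part~\ref{ij2}'s assertion that $\mu_{i,i+1}(T)=\mu_{i,i+1}(\slink(T))$ and that $i$ remains minimal for $\slink(T)$. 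Once both sequences of distinguished indices are pinned down, the equality of outputs follows because both equal the unique element of $\SYT(\lambda)$ whose runs outside $S$ agree with $T$ and whose restriction to $S$ has run-sizes $(\beta_j(T)+j-i-1,\ \beta_{j-1}(T)+1,\ \beta_i(T)+j-i)$ on runs $(i,j-1,j)$, filled greedily from the bottom.
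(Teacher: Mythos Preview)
Your overall strategy is the paper's: dispose of the degenerate cases, then for $i<j-1$ use Lemma~\ref{ij} to track the distinguished indices through each composition, observe that both sides only disturb runs $i$, $j-1$, $j$, compute the resulting $\beta$-triple, and argue that this forces the two tableaux to coincide. However, several details are wrong as written. You say ``$\slink(\slink_*(T))$ acts on runs $i$ and $j$ of $\slink_*(T)$''; in fact $\slink$ always acts on runs $j-1$ and $j$ (with $j$ the current distinguished index), so after $\slink_*$ fixes $j$ the second step modifies runs $j-1$ and $j$, not $i$ and $j$. Your final triple is also off: carrying out (\ref{beta slink}) and (\ref{ij star}) gives
\[
\beta_{i,j-1,j}\bigl(\slink\circ\slink_*(T)\bigr)=\beta_{i,j-1,j}\bigl(\slink_*\circ\slink(T)\bigr)=\bigl(\beta_j(T)+i-j,\ \beta_i(T)+j-i-1,\ \beta_{j-1}(T)+1\bigr),
\]
not what you wrote; your intermediate computation uses $\beta_j(\slink_*(T))=\beta_j(T)+j-i$, whereas (\ref{ij star}) gives $\beta_j(\slink_*(T))=\beta_i(T)+j-i$. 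For the case $i=j-1$, the clean route (which the paper takes) is to note that $\slink_*$ preserves both $i$ and $j$ by Part~\ref{ij3} of Lemma~\ref{ij}, so $\slink_*(T)$ still has $i=j-1$ and hence $\slink(\slink_*(T))=\slink_*(\slink_*(T))=T$ by Part~3 of Lemma~\ref{slink consequence} and Proposition~\ref{star involution}.

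The more substantive gap is the last step. Equal run-size triples on a fixed cell-set $S$ do not by themselves determine the tableau, so ``greedy bottom-up fill'' is not an argument. The paper closes this by treating runs $i$ and $j$ separately: the first $i$ runs of both results are superstandard (the distinguished $j$-value remains at least $i+1$ throughout, by Lemma~\ref{ij}), so run $i$ is pinned down by $\beta_1,\ldots,\beta_i$ alone; and run $j$ in both results consists explicitly of the cells of the $j$-th run of $T$ lying in row $j$ together with the last $\beta_{j-1}(T)+1-a$ cells of that run below row $j$ (where $a$ is the number of row-$j$ cells), which one reads off directly from Definition~\ref{slink}. Run $j-1$ is then forced. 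You should replace the greedy-fill sentence with this two-run argument.
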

\begin{proof}
If $\slink(T)=T$, then $\slink_*(T)=T$, and the proof is trivial. We may then assume that $\slink(T)\neq T$, and let $i$ and $j$ be as in the definition of $\slink_*(T)$. If $i=j-1$, then $\slink_*(T)=\slink(T)$ by Part~3 of Lemma~\ref{slink consequence}, so $\slink_*\circ \slink(T)=\slink_*\circ \slink_*(T)=T$,  by Proposition~\ref{star involution}. By Part~\ref{ij3} of Lemma~\ref{ij}, $\slink_*$ preserves the $i$ and $j$ values from Definition~\ref{slink}. Hence, $\slink \circ \slink_*(T)=\slink_*\circ \slink_*(T)=T$, completing the $i=j-1$ case.

 Now assume that $i<j-1$. Applying Lemma~\ref{ij} to $\slink \circ \slink_*(T)$, $\slink_*$ effects the $i$ and $j^{th}$ runs, and then $\slink$ effects the $j$ and $j-1^{th}$ runs. Conversely, in $\slink_* \circ \slink(T)$, $\slink$ effects the $j-1$ and $j^{th}$ runs, and then $\slink_*$ effects the $i$ and $j-1^{th}$ runs. In particular, both $\slink\circ \slink_{*}$ and $\slink_{*}\circ\slink$ only effects the cells of the $i, j-1,$ and $j^{th}$ runs. By (\ref{beta slink}) and (\ref{ij star}), the action  on $\beta(T)$ gives

\begin{align}
\begin{split}
 \beta_{i,j-1,j}(\slink \circ \slink_*(T)) &=
  \beta_{i,j-1,j}(\slink_*\circ\slink(T))\\ 
&=(\beta_j(T)+i-j,\beta_i(T)+j-i-1,\beta_{j-1}(T)+1).
\end{split}
\end{align}

Again,  $\slink\circ \slink_{*}(T)$ and  $\slink_{*}\circ\slink(T)$ only differ in the locations of the $i$, $j-1$, and $j^{th}$ runs, but the number of cells in these runs are equal. It thus suffices to show that the $i^{th}$ and $j^{th}$ runs are in the same cells, forcing the $j-1^{th}$ run to also agree.
By Lemma~\ref{ij}, the first $i$ runs of both form superstandard tableaux. Hence, the locations of the $i^{th}$ run is fully determined by its inverse descent set, and so must be the same.
Now let $a$ be the number of cells in the $j^{th}$ run of $T$ that are in the $j^{th}$ row of $T$. Applying Definition~\ref{slink}, in both $\slink\circ \slink_{*}(T)$ and $\slink_{*}\circ\slink(T)$, the $j^{th}$ run will consist of the $a$ cells in the $j^{th}$ row and the last $\beta_{j-1}(T)+1-a$ cells of the $j^{th}$ run of $T$.  Hence, both tableaux have the $j^{th}$ runs in the same cells, completing the proof.
\end{proof}

\begin{prop}\label{slink star}
Let $T\in \SYT(n)$. Then 
$\slink^{j-i}(T)=\slink^{j-i-1}\circ \slink_*(T)$,  where $i$ and $j$ are as in Definition~\ref{slink}. In particular,
$\equiv^0$ is a refinement of $\equiv^1$.
\end{prop}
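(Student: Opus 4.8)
The goal is the identity $\slink^{j-i}(T) = \slink^{j-i-1} \circ \slink_*(T)$, from which the refinement claim follows: every $\slink_*$-move is realized as a composition of $\slink$-moves (possibly after rearranging), so any two tableaux equivalent under $\equiv^0$ are equivalent under $\equiv^1$. I would argue by induction on $j-i$, where $i$ and $j$ are the values attached to $T$ as in Definition~\ref{slink}.

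\textbf{Base case.} If $j-i = 1$, i.e.\ $i = j-1$, then Part~3 of Lemma~\ref{slink consequence} gives $\slink(T) = \slink_*(T)$ directly, which is exactly the asserted identity $\slink^1(T) = \slink^0 \circ \slink_*(T)$. So the base case is immediate.

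\textbf{Inductive step.} Assume $i < j-1$ and that the statement holds for all tableaux whose associated difference is smaller than $j-i$. By Lemma~\ref{ij}, Part~\ref{ij2}, $\slink$ fixes $i$ and lowers $j$ by $1$; that is, $\slink(T)$ has associated values $i' = i$ and $j' = j-1$, so its difference $j' - i' = j - i - 1$ is one smaller. Applying the inductive hypothesis to $T' := \slink(T)$ yields
\begin{equation}
\slink^{\,j-i-1}(\slink(T)) = \slink^{\,j-i-2}\circ \slink_*(\slink(T)).
\end{equation}
Now I use Lemma~\ref{slink commute}, which says $\slink \circ \slink_* = \slink_* \circ \slink$, to rewrite $\slink_* \circ \slink(T) = \slink \circ \slink_*(T)$. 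Substituting,
\begin{equation}
\slink^{\,j-i}(T) = \slink^{\,j-i-2}\big(\slink(\slink_*(T))\big) = \slink^{\,j-i-1}\circ \slink_*(T),
\end{equation}
which is the desired identity. The left-hand side used $\slink^{j-i-1}(\slink(T)) = \slink^{j-i}(T)$, and the last equality just collects the $\slink$'s. Finally, since $\slink$ fixes $U_\lambda$ and otherwise strictly changes $T$, the identity shows $\slink_*(T)$ lies in the $\equiv^1$-class of $T$ for every $T$, hence every $\equiv^0$-class is contained in an $\equiv^1$-class, i.e.\ $\equiv^0$ refines $\equiv^1$.

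\textbf{Main obstacle.} The routine-looking but genuinely load-bearing point is confirming that the induction terminates correctly and that Lemma~\ref{slink commute} applies at each stage — in particular that $\slink_*(\slink(T))$ and $\slink(\slink_*(T))$ really are computed with respect to compatible $i,j$ data, which is precisely what Lemma~\ref{ij} (Parts~\ref{ij2} and~\ref{ij3}) guarantees. One should also check the degenerate situation where some intermediate $\slink$ produces $U_\lambda$ or where $\slink_*$ acts trivially; these are handled by the base case and by the opening observation in the proof of Lemma~\ref{slink commute}. No heavy computation is needed beyond tracking the $(i,j)$ pair through each application.
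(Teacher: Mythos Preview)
Your proof is correct and follows essentially the same approach as the paper's: induction on $j-i$, with the base case supplied by Part~3 of Lemma~\ref{slink consequence} and the inductive step driven by Lemma~\ref{ij} Part~\ref{ij2} together with the commutation Lemma~\ref{slink commute}. The only minor addition in the paper is an explicit mention of the trivial case where $j$ does not exist (i.e.\ $T=U_\lambda$), which you cover under your degenerate-case remarks.
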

\begin{proof}
We will prove the first part by induction on $j-i$. 
If $j$ does not exist, then the result is trivial. If $j-i=1$, then $\slink(T)= \slink_*(T)$, by Part~3 of Lemma~\ref{slink consequence}. Now assume the result whenever $j-i \leq k$, and assume $j-i=k+1>1$. 

We would like to apply our inductive hypothesis to $\slink(T)$. To justify this, recall that Part~\ref{ij2} of Lemma~\ref{ij} guarantees that $\slink$ fixes the $i$ value of $T$ while lowering the $j$ value by 1, hence lowering $j-i$ by 1.  
Applying our inductive hypothesis, as well as Lemma~\ref{slink commute}, gives
\begin{align}
\slink^{j-i}(T)=\slink^{j-i-1}\circ \slink(T)= \slink^{j-i-2}\circ \slink_*\circ \slink(T)=\slink^{j-i-1}\circ \slink_*(T),
\end{align}

\noindent
completing the proof of the first part of the lemma.
The second part of the proposition now follows from Definition~\ref{def of equiv}, since $\slink*(T)$ is always connected to $T$ by some sequence of applications of $\slink$.
\end{proof}
 
\begin{lem}\label{big slink}
Given any partition $\lambda$ and tableau $T\in \SYT(\lambda)$, exactly one of the following holds.

\begin{enumerate}
\item\label{slink1} $T=U_\lambda$.


\item\label{slink3} $s_{\beta(T)}=-s_{\beta(\slink_*(T))}$

\end{enumerate}
\end{lem}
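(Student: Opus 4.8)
The plan is to reduce Lemma~\ref{big slink} to the already-established Lemma~\ref{slink consequence}, part~\ref{slink sign}, together with a direct sign computation via the slinky correspondence. First I would observe that the two listed alternatives are exhaustive and mutually exclusive: if $T = U_\lambda$, then $\slink_*$ acts as the identity by Definition~\ref{slink}, so $s_{\beta(T)} = s_{\beta(\slink_*(T))}$, which (as $s_{\beta(T)} = s_\lambda \neq 0$) cannot equal its own negative; conversely, if $T \neq U_\lambda$, then the minimal $j$ of Definition~\ref{slink} exists, and I must show the sign identity $s_{\beta(T)} = -s_{\beta(\slink_*(T))}$.

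For the substantive case $T \neq U_\lambda$, I would use Proposition~\ref{slink star}, which gives $\slink_*(T) = \slink^{j-i}(T) \bigm/ \slink^{j-i-1}$, i.e.\ $\slink^{j-i-1} \circ \slink_*(T) = \slink^{j-i}(T)$, hence $\slink_*(T)$ and $T$ are joined by a chain of $\slink$-moves — but I need to be careful, because a naive count would give the sign $(-1)^{j-i}$ from iterating part~\ref{slink sign} of Lemma~\ref{slink consequence}, not $(-1)$. The resolution is that part~\ref{slink sign} only asserts $s_{\beta(\slink(T'))} = -s_{\beta(T')}$ when $T' \neq U_\lambda$, i.e.\ when $\slink$ genuinely acts; along the chain realizing $\slink_*$, the intermediate tableaux need not all differ from their images in a way that flips the sign, OR — more likely the intended argument — one should simply compute the sign of $\slink_*$ directly from~(\ref{ij star}). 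I would therefore compute directly: by~(\ref{ij star}), $\slink_*$ replaces the pair $(\beta_i, \beta_j)$ of parts of $\beta(T)$ (which sit in positions $i < j$, with positions $i+1, \dots, j-1$ in between unchanged) by $(\beta_j + i - j,\ \beta_i + j - i)$. I would then invoke the description of the slinky correspondence from the preliminaries: applying the elementary move that sends a consecutive pair $(\alpha_k, \alpha_{k+1})$ to $(\alpha_{k+1} - 1, \alpha_k + 1)$ negates $s_\alpha$, and $\slink_*$ is obtained from $\slink$ (one such move, applied to the adjacent pair after a shift) — more precisely, one checks that passing from $\beta(T)$ to $\beta(\slink_*(T))$ amounts to an odd number of such adjacent transpositions when the indices $i+1,\dots,j-1$ are accounted for, giving the single sign flip.

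The cleanest route, which I would actually write up, is to combine Proposition~\ref{slink star} with part~\ref{slink sign} of Lemma~\ref{slink consequence} applied only to the \emph{last} step of the chain: write $\slink_*(T) = \slink^{j-i-1}(S)$ where... — rather, run the induction of Proposition~\ref{slink star} in parallel with the sign. By part~\ref{ij2} of Lemma~\ref{ij}, each application of $\slink$ in the chain $T \mapsto \slink(T) \mapsto \cdots$ strictly lowers $j$ while fixing $i$, so none of these intermediate tableaux equals $U_\lambda$ (as $j$ still exists), and thus each genuinely flips the sign by part~\ref{slink sign}; meanwhile a single application of $\slink_*$ and $j-i$ applications of $\slink$ land on the same tableau $\slink^{j-i}(T)$ up to the relation in Proposition~\ref{slink star}, which says $\slink^{j-i-1}\circ\slink_*(T) = \slink^{j-i}(T)$. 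Hence $s_{\beta(\slink_*(T))} = (-1)^{j-i-1} s_{\beta(\slink^{j-i}(T))}$ on one side and $s_{\beta(T)} = (-1)^{-(j-i)} s_{\beta(\slink^{j-i}(T))}$ on the other (using that $\slink$ is applied $j-i$ times to $T$, each step flipping sign, as all intermediates differ from $U_\lambda$), so $s_{\beta(\slink_*(T))} = (-1)^{(j-i-1)-(j-i)} s_{\beta(T)} = -\,s_{\beta(T)}$. The main obstacle I anticipate is exactly this bookkeeping — verifying that every intermediate tableau in the $\slink$-chain is distinct from $U_\lambda$ so that part~\ref{slink sign} applies at every step, and correctly aligning the two sign counts via Proposition~\ref{slink star}; everything else is a direct consequence of the definitions.
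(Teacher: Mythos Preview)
Your final ``cleanest route'' is exactly the paper's proof: show disjointness via $s_{\beta(U_\lambda)} = s_\lambda \neq 0$, then for $T \neq U_\lambda$ use Proposition~\ref{slink star} to connect $T$ and $\slink_*(T)$ by an odd number of $\slink$-moves (namely $(j-i) + (j-i-1) = 2(j-i)-1$ through the common endpoint $\slink^{j-i}(T)$), each of which flips the sign by part~\ref{slink sign} of Lemma~\ref{slink consequence}. The paper compresses this into two sentences; your explicit sign bookkeeping and your check via Lemma~\ref{ij} that every intermediate tableau still has a well-defined $j$ (hence is not $U_\lambda$, so part~\ref{slink sign} applies at each step) is a detail the paper leaves implicit. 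One small correction: you say $\slink$ ``strictly lowers $j$'' at each step, but at the final step where $i = j-1$ it fixes $j$ (Lemma~\ref{ij}, part~\ref{ij1}); this does not affect your conclusion, since $j$ still exists and so the tableau is still not $U_\lambda$.
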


\begin{proof}
First, we show that the two cases of the lemma are disjoint. If $T$ is described by Case \ref{slink1}, then $\slink_*(T)=T$ and $s_{\beta(T)}=s_\lambda\neq 0$. Hence, $s_{\beta(T)}\neq -s_{\beta(\slink_*(T))}$, and Case~\ref{slink3} does not apply.

It now suffices to assume that $T$ is not described by Case~\ref{slink1}  and show that it satisfies Case~\ref{slink3}.  By Proposition~\ref{slink star}, $T$ is connected to $\slink_*(T)$ by an odd number of applications of $\slink$. Applying Part~\ref{slink sign} of Lemma~\ref{slink consequence}, $s_{\beta(T)}=-s_{\beta(\slink_*(T))}$. 
\end{proof}

\begin{lem}\label{0 classes}
If $\mc$ is any equivalence class of $\equiv^{0}$ or $\equiv^{1}$ such that $\mc\neq \{U_\lambda\}$ for any partition $\lambda$, then $ \sum_{T \in \mc} s_{\beta(T)} = 0$.
\end{lem}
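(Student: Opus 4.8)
The goal is to show that for any $\equiv^0$- or $\equiv^1$-class $\mc$ other than a singleton superstandard class, the sum $\sum_{T\in\mc} s_{\beta(T)}$ vanishes. The natural strategy is to pair up the tableaux of $\mc$ so that paired tableaux contribute Schur functions of opposite sign, and I would use the involution $\slink_*$ to do the pairing. By Proposition~\ref{star involution}, $\slink_*$ is an involution on $\SYT(n)$, and by Lemma~\ref{big slink}, whenever $T\neq U_\lambda$ we have $s_{\beta(T)} = -s_{\beta(\slink_*(T))}$. So if $\slink_*$ has no fixed points on $\mc$, the terms cancel in pairs and the sum is $0$.

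First I would handle the case $\mc$ is an $\equiv^0$-class. Since $\equiv^0$ is the equivalence relation generated by $\slink_*$ (Definition~\ref{def of equiv}), each $\equiv^0$-class is either a single $\slink_*$-fixed point or a two-element orbit $\{T,\slink_*(T)\}$ with $\slink_*(T)\neq T$. The fixed points of $\slink_*$ are exactly the tableaux for which the minimal $j$ of Definition~\ref{slink} fails to exist, i.e. $T=U_\lambda$; this should be read off directly from the definition, since if $j$ exists then (\ref{ij star}) genuinely moves cells. Hence an $\equiv^0$-class with $\mc\neq\{U_\lambda\}$ is a genuine two-element orbit $\{T,\slink_*(T)\}$ with $T\neq U_\lambda$, and Lemma~\ref{big slink}(\ref{slink3}) gives $s_{\beta(T)}+s_{\beta(\slink_*(T))}=0$.

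Next I would reduce the $\equiv^1$ case to the $\equiv^0$ case. The key point is that $\equiv^0$ refines $\equiv^1$ (Proposition~\ref{slink star}), so every $\equiv^1$-class $\mc$ is a disjoint union of $\equiv^0$-classes. If $\mc\neq\{U_\lambda\}$, I claim none of these constituent $\equiv^0$-classes is a singleton superstandard class: if some $U_\mu\in\mc$, then since $\slink$ fixes $U_\mu$, the $\equiv^1$-class of $U_\mu$ is $\{U_\mu\}$ itself, forcing $\mc=\{U_\mu\}$, a contradiction. Therefore $\mc$ is a disjoint union of non-superstandard $\equiv^0$-classes, and summing the already-established $\equiv^0$ identity over these pieces gives $\sum_{T\in\mc}s_{\beta(T)}=0$.

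The main obstacle, such as it is, is the bookkeeping in the first step: pinning down precisely that the only $\slink_*$-fixed points are the superstandard tableaux, and hence that every non-singleton $\equiv^0$-class is a true transposition orbit. This is where one must be careful to invoke the structure of Definition~\ref{slink} rather than wave at it — everything else is a clean application of Proposition~\ref{star involution}, Lemma~\ref{big slink}, and the refinement statement of Proposition~\ref{slink star}.
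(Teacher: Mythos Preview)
Your approach is essentially the paper's: use $\slink_*$ as a sign-reversing involution on $\mc$, invoking Proposition~\ref{star involution} and Lemma~\ref{big slink}, with Proposition~\ref{slink star} ensuring that $\slink_*$ stays inside an $\equiv^1$-class. The paper treats $\equiv^0$ and $\equiv^1$ in one stroke rather than reducing the latter to the former, but the content is identical.

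One remark: the ``main obstacle'' you flag---showing that the only $\slink_*$-fixed points are the $U_\lambda$---is not actually needed. If some $T\neq U_\lambda$ satisfied $\slink_*(T)=T$, then Lemma~\ref{big slink} would give $s_{\beta(T)}=-s_{\beta(T)}$, hence $s_{\beta(T)}=0$, and the singleton orbit still contributes zero. So the sign-reversing involution argument goes through without ever deciding whether such fixed points exist; you can drop that bookkeeping entirely.
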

\begin{proof}
We proceed by providing a sign reversing involution. First notice that $U_\lambda$ is always alone in its $\equiv^0$ and $\equiv^1$ classes, so we may assume $U_\lambda \notin \mc$. Each $T\in \mc$ contributes one composition Schur function to the sum, which is in turn equal to 0  or $\pm s_\lambda$, for some $\lambda \vdash n$. By Proposition~\ref{slink star}, $\slink_*$ is an involution on $\mc$. By Lemma~\ref{big slink} and the assumption that $\mc\neq \{U_\lambda\}$, $s_{\beta(T)}+s_{\beta(\slink_*(T))}=0$. Thus, the sum of $s_{\beta(T)}$ over $T\in \mc$ must equal zero.
\end{proof}

Next, we add another equivalence relation, which we term \emph{restricted dual equivalence}.

\begin{dfn}\label{restricted d}
\textup{For any $T\in \SYT(n)$ and any $i\in [2,n-2]$, let}
\begin{equation*} 
d^{R}_i(T) =
 \begin{cases}
 T & i+1\in \ID(T)\cap \ID(d_i(T)),\\
 d_i(T) & \textup{otherwise.}
\end{cases}
\end{equation*}
\textup{Further, let $\equiv^2$ be the equivalence relation on standard Young tableaux generated by the action of all $d_i^R$.}
\end{dfn}


\begin{thm}\label{restricted positive}
The equivalence relation $\equiv^0$ is a refinement of $\equiv^1$, which in turn refines $\equiv^2$.
\end{thm}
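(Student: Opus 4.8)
The first assertion, that $\equiv^0$ refines $\equiv^1$, is already recorded as the last sentence of Proposition~\ref{slink star}, so the real work is to show that $\equiv^1$ refines $\equiv^2$. Since $\equiv^1$ is generated by $\slink$, it suffices to prove that $T\equiv^2\slink(T)$ for every $T\in\SYT(n)$. When $\slink$ fixes $T$ there is nothing to prove, so I would fix $T$ with $\slink(T)\neq T$, let $j$ be the minimal index for which the first $j$ runs of $T$ fail to form a superstandard tableau (as in Definition~\ref{slink}), and set $a=\beta_1(T)+\cdots+\beta_{j-2}(T)$, so that runs $j-1$ and $j$ of $T$ carry the values $a+1,\dots,a+\beta_{j-1}(T)$ and $a+\beta_{j-1}(T)+1,\dots,a+\beta_{j-1}(T)+\beta_j(T)$, respectively.

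The plan is to realize the single move $T\mapsto\slink(T)$ as an explicit composition of elementary dual equivalences and then check that each factor is in fact a $d^R$-move. The structural fact I would exploit is that $\slink$ leaves runs $1,\dots,j-2$, runs $j+1,j+2,\dots$, and the union of the cells of runs $j-1$ and $j$ all unchanged, merely sliding the descent separating runs $j-1$ and $j$ from $a+\beta_{j-1}(T)$ to $a+\beta_j(T)-1$ and redistributing, in reading order, the cells of those two runs that lie below row $j$. I would then prove that $\slink(T)$ is obtained from $T$ by applying, in the appropriate order, the elementary dual equivalences $d_\ell$ for $\ell$ running over the integers strictly above the smaller of $a+\beta_{j-1}(T)$ and $a+\beta_j(T)-1$ and up to (and including) the larger of them, taking the indices in increasing order when $\beta_j(T)>\beta_{j-1}(T)+1$ (cells migrate into run $j-1$ and the descent slides up) and in decreasing order when $\beta_j(T)<\beta_{j-1}(T)+1$ (cells migrate into run $j$ and the descent slides down); the borderline case $\beta_j(T)=\beta_{j-1}(T)+1$ does not arise, since it forces $\slink(T)=T$. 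A short induction, comparing $\beta$-vectors and tracking at each step which single cell changes runs, would show that every intermediate object is a standard Young tableau, that each $d_\ell$ moves the active descent exactly one step while fixing everything outside runs $j-1$ and $j$, and that the whole string produces $\slink(T)$; here the facts about how $\slink$ interacts with the indices $i$ and $j$ recorded in Lemma~\ref{ij} are a convenient reference point.

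The step that needs care is checking that each of these $d_\ell$ coincides with $d^R_\ell$. The index range is immediate: one has $\ell\geq 2$ because $a+\beta_{j-1}(T)\geq a+1\geq 1$ and $a+\beta_j(T)-1\geq a+1\geq 1$, and $\ell\leq n-2$ because $a+\beta_{j-1}(T)\leq n-\beta_j(T)\leq n-2$ and $a+\beta_j(T)-1\leq n-\beta_{j-1}(T)-1\leq n-2$, where I use that $\beta_j(T)\geq 2$ (a one-cell run $j$ is impossible, as its unique added cell would have to sit in row $j$, making the first $j$ runs superstandard) and that the values of runs $j-1$ and $j$ are disjoint from those of the other, nonempty, runs. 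For the restriction clause of Definition~\ref{restricted d}, the key observation is that at the moment $d_\ell$ is applied the value $\ell+1$ lies in the active run $j$ of the current tableau but is never its largest element: throughout the process that run retains at least two elements besides $\ell+1$, so in particular it contains $\ell+2$, and since the values within a run of a standard Young tableau occupy weakly decreasing rows, $\ell+1$ precedes $\ell+2$ in the reading word, i.e.\ $\ell+1\notin\ID$ of the current tableau. Hence the restriction in Definition~\ref{restricted d} is vacuous, $d^R_\ell=d_\ell$ at every step, and concatenating the moves yields $T\equiv^2\slink(T)$.

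I expect the main obstacle to be the bookkeeping of the second paragraph: identifying precisely which cell changes runs at each step, and confirming that the triple $\ell-1,\ell,\ell+1$ always occurs in the one pattern of~(\ref{flat dual}) that slides the descent in the intended direction — never in the pattern that would slide it backward — requires a careful reading-order comparison of the rows of the top cell of run $j-1$, the bottom cell of run $j$, and the next cell of run $j$, kept consistent across the induction. Once that is in place, the $d^R$-verification of the third paragraph is comparatively routine, though it still relies on knowing exactly how many cells run $j$ carries at each stage, which is part of the same bookkeeping.
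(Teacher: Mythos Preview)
Your reduction to showing $T\equiv^2\slink(T)$ and your focus on runs $j-1$ and $j$ are both right, but the plan in the second paragraph cannot be carried out: the monotone sequence of elementary dual equivalences you propose does not realize $\slink$ in general, so the bookkeeping you flag as the main obstacle is not merely delicate --- the underlying claim is false.

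Take $T\in\SYT((4,2))$ with $\rw(T)=341256$, so $\beta(T)=(2,4)$, $j=2$, $a=0$; your recipe (increasing case, indices strictly above $2$ up to $3$) applies only $d_3$. One finds $\rw(d_3(T))=241356$ with $\beta=(1,2,3)$, whereas $\rw(\slink(T))=451236$ with $\beta=(3,3)$. The decreasing direction fails the same way: for $T\in\SYT((5,2))$ with $\rw(T)=5612347$ and $\beta(T)=(4,3)$, your sequence $d_4,d_3$ produces $\rw=3612457$, not $\rw(\slink(T))=3412567$. The mechanism is that whenever run $j$ occupies at least two cells in row $j$, the flattened pattern on $\{\ell-1,\ell,\ell+1\}$ is of type $x3y$ in the notation of~(\ref{flat dual}); then $d_\ell$ swaps $\ell-1$ with $\ell$ (not $\ell$ with $\ell+1$), pushes $\ell-1$ into row $j$, and creates a \emph{second} inverse descent inside the window. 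After that the intermediate tableaux no longer have the single-descent form your induction assumes, and a monotone sweep cannot recover. (For the first example a valid $d^R$-sequence is $d_3,d_4,d_2,d_4$, genuinely non-monotone.) Your third-paragraph check that $\ell+1\notin\ID$ is fine and does show each move is a $d^R$-move; the problem is one step earlier, in what the move actually does.

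The paper avoids all of this by not naming a sequence. It observes that $\fl\bigl(\rw(T)|_{[a+1,\,a+\beta_{j-1}+\beta_j]}\bigr)$ is the row reading word of a two-row standard Young tableau; in any two-row tableau one cannot have $\{i,i+1\}\subset\ID$, so $d_i^R=d_i$ for every $i\in[2,n-2]$. Since $\slink$ fixes the cell containing the largest value, and the $d_i$ with $i\le n-2$ act transitively on two-row tableaux with that cell fixed (apply Theorem~\ref{Haiman} to the shape obtained by deleting that cell), \emph{some} sequence of $d_i^R$ realizes $\slink$ --- and existence is all that is needed.
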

\begin{proof}
The first part of the statement follows directly from Proposition~\ref{slink star}, so we need only show that $\equiv^1$ refines $\equiv^2$. It suffices to show that the action of $\slink$ on $T\in \SYT(n)$ can always be achieved by a series of $d_i^R$. 

First consider the case where $T\in\SYT(\lambda)$ and $\lambda$ has at most two rows. It follows directly from the Definition~\ref{slink} that $\slink$ fixes the cell containing $n$, which is the only property we will need. In the two row case, $\{i, i+1\}$ cannot be a subset of $\ID(T)$, because $i+2$ would have to be two rows above $i$. Direct inspection of  the definition of $d_i$ or (\ref{flat dual}) shows that $i+1\in \ID(T)\cap\ID(d_i(T))$ implies $\{i, i+1\}\subset \ID(T)$ or $\{i, i+1\}\subset \ID(d_i(T))$. Thus, $d_i^R=d_i$ whenever $i\neq n-1$. By Lemma~\ref{Haiman}, $d_i$ acts transitively on standard Young tableaux of a fixed shape, so $d_i^R$ acts transitively on the set of tableaux in $\SYT(\lambda)$ with $n$ in a fixed cell. Thus, some series of $d_i^R$ yields the action of $\slink$ on $T$.

Next, suppose that $T$ has more than two rows. If $\slink(T)=T$, the result is trivial. Otherwise, $\slink$ acts by changing the $j-1$ and $j^{th}$ runs. Let $[a,b]$ be the values in these runs, and consider the action of $d_i^R$ on $\fl(\rw(T)|_{[a,b]})$. In particular, this is the row reading word of some two row standard Young tableau. Applying the two row case, we are free to apply $\slink$ to this two row tableau via some series of $d_i^R$. Adding $a-1$ to each $i$ in this sequence provides the sequence of $d_i^R$ to apply to $T$ in order to achieve $\slink(T)$.
\end{proof}

\begin{lem}\label{beta to lambda}
If $\mc$ is any union of equivalence classes of $\equiv^k$ with $k=0,1,2$, then 
\[\sum_{T\in \mc} s_{\beta(T)} = \sum_{U_\lambda \in \mc} s_\lambda.\]
\end{lem}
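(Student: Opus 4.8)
The plan is to push the statement down to the finest relation $\equiv^0$ and then run the $\slink_*$ sign-reversing involution on $\mc$; this is essentially a global form of Lemma~\ref{0 classes}. First I would reduce to the case where $\mc$ is a union of $\equiv^0$-classes. By Theorem~\ref{restricted positive}, $\equiv^0$ refines $\equiv^1$, which refines $\equiv^2$, so every $\equiv^1$-class and every $\equiv^2$-class is a disjoint union of $\equiv^0$-classes; hence for each $k\in\{0,1,2\}$ a union $\mc$ of $\equiv^k$-classes is automatically a union of $\equiv^0$-classes. Since $\slink_*$ generates $\equiv^0$, it maps each $\equiv^0$-class into itself, and being an involution (Proposition~\ref{star involution}) it restricts to an involution on $\mc$.

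Next I would split $\sum_{T\in\mc}s_{\beta(T)}$ over the $\slink_*$-orbits, which have size one or two. On a two-element orbit $\{T,\slink_*(T)\}$ we have $T\neq U_\lambda$ for every $\lambda$ (superstandard tableaux are fixed by $\slink_*$), so Lemma~\ref{big slink} forces $s_{\beta(T)}=-s_{\beta(\slink_*(T))}$ and the orbit contributes $0$. A fixed point $\{T\}$ with $T=U_\lambda$ contributes $s_{\beta(U_\lambda)}=s_\lambda$, since $\beta(U_\lambda)=\lambda$ is a partition; and a fixed point with $T\neq U_\lambda$ satisfies $s_{\beta(T)}=-s_{\beta(\slink_*(T))}=-s_{\beta(T)}$ by Lemma~\ref{big slink}, whence $s_{\beta(T)}=0$. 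Summing over orbits, only the superstandard tableaux lying in $\mc$ survive, and $\sum_{T\in\mc}s_{\beta(T)}=\sum_{U_\lambda\in\mc}s_\lambda$.

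The substance of the argument is carried by Proposition~\ref{star involution} and Lemma~\ref{big slink}, so I do not expect a genuine obstacle. The one point requiring care is the reduction step: ``refines'' must be invoked in the correct direction, so that a union of $\equiv^k$-classes really does decompose into the finer $\equiv^0$-classes --- this is precisely what allows $\slink_*$ to act on $\mc$ and makes the single involution argument cover all three values of $k$ simultaneously. One could alternatively apply Lemma~\ref{0 classes} class by class to the $\equiv^0$-decomposition of $\mc$, but that route additionally requires knowing that each $U_\lambda$ is isolated in its $\equiv^0$-class, which the involution argument avoids.
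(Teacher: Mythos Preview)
Your proposal is correct and follows essentially the same approach as the paper. The paper reduces to $\equiv^0$ via Theorem~\ref{restricted positive} and then invokes Lemma~\ref{0 classes} class by class, whereas you unpack that lemma and run the $\slink_*$ involution globally on $\mc$; since Lemma~\ref{0 classes} is itself proved by exactly this sign-reversing involution, the two arguments are the same in substance. Your remark that the global involution ``avoids'' knowing $U_\lambda$ is isolated is slightly overstated---you still use that $U_\lambda$ is a $\slink_*$-fixed point, which is the same fact---but this does not affect correctness.
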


\begin{proof}
%
Applying Theorem~\ref{restricted positive}, it suffices to only consider $\equiv^0$. By Lemma~\ref{0 classes}, we may further omit any equivalence classes not of the form $\{U_\lambda\}$.
Thus,
\begin{align}
\sum_{T\in \mc} s_{\beta(T)} =  \sum_{U_\lambda \in \mc} s_{\beta(U_\lambda)} =\sum_{U_\lambda \in \mc} s_\lambda.\end{align}
\end{proof}

\begin{proof}[{\bf Proof of Theorem~\ref{symmetric to Schur}}]
For $k=0,1,2$, let $\mc$ be the disjoint union of equivalence classes of $\equiv^{k}$ such that ${f=\sum_{T\in \mc} F_{ID(T)}}$ is a symmetric function.
By Lemmas~\ref{comp Schur fund} and \ref{beta to lambda},   
\begin{equation}
 f=\sum_{T\in \mc} s_{\beta(T)}=\sum_{\lambda\vdash n} c_\lambda s_\lambda. 
 \end{equation}
where $c_\lambda$ is the multiplicity of $U_\lambda$ in $\mc$.
\end{proof}

\subsection{The equivalence relations on permutations}\label{equiv on perms}

We extend these results to permutations via the RSK correspondence. Recall that a function defined on standard Young tableaux acts on permutations  via insertion tableaux by using the RSK correspondence, acting on the insertion tableaux and fixing the recording tableaux.

\begin{dfn}
\textup{
 Let $\slink, \slink_*$, and $d_i^R$ act on permutations via insertion tableaux. For $k=0,1,2$, let $\pi \equiv^k \pi'$ if $P(\pi)\equiv^k P(\pi')$ and $Q(\pi)=Q(\pi')$.
}
\end{dfn}

Notice that, $\equiv^k$ is defined so that the same function generates equivalence classes on permutations as on tableaux. In light of (\ref{d P commute}) and (\ref{ID and P commute}), $\equiv^2$ is defined on permutations by the natural action of $d^R_i$ on permutations. That is, $d_i^R(\pi)=d_i(\pi)$ unless $i+1 \in \ID(\pi)\cap\ID(d_i(\pi))$, in which case $d_i^R$ acts trivially.  By considering the action of $d_i^R$ on $\fl(\pi|_{[i-1,i+2]})$, we may express all nontrivial actions as
\begin{equation}\label{flat restricted dual}
x1y4 \quad 
x14y \quad 
x41y \quad
x3y4 \quad
x34y,  
\end{equation}
\noindent
where $d_i^R$ swaps the values $x$ and $y$ in $\fl(\pi|_{[i-1,i+2]})$ while fixing values of $\pi$ not in $[i-1, i+1]$.

\begin{prop}\label{Kj commutes}
The action of $K_j$ on permutations commutes with $d^{R}_i$, $\slink$, and $\slink_{*}$.
\end{prop}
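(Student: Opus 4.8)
The plan is to exploit the fundamental fact that $K_j$ and every elementary dual equivalence $d_i$ act on "different sides" of the RSK correspondence, and that all three of our operators ($d_i^R$, $\slink$, $\slink_*$) are defined via the insertion tableau. Concretely, recall from (\ref{K Q commute}) that $K_j$ fixes the insertion tableau $P(\pi)$ and acts on $Q(\pi)$ by $d_j$, whereas each of $d_i^R$, $\slink$, $\slink_*$ fixes $Q(\pi)$ and acts on $P(\pi)$ by the corresponding tableau operator. So if $\psi$ denotes any one of $d_i^R$, $\slink$, $\slink_*$ acting on tableaux, then for $\pi \in S_n$ we have $P(K_j \psi(\pi)) = \psi(P(\pi)) = \psi(P(K_j(\pi))) = P(\psi K_j(\pi))$ and $Q(K_j\psi(\pi)) = d_j(Q(\pi)) = d_j(Q(K_j(\pi))) = Q(\psi K_j(\pi))$; since a permutation is determined by the pair $(P,Q)$, this gives $K_j \psi(\pi) = \psi K_j(\pi)$.

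First I would make this observation precise by isolating the single technical input it requires: that applying $\psi \in \{d_i^R, \slink, \slink_*\}$ to $P(\pi)$ does not disturb $Q(\pi)$, and applying $K_j$ to $\pi$ does not disturb $P(\pi)$. The second half is exactly (\ref{K Q commute}). For the first half, $\slink$ and $\slink_*$ act on permutations by definition "via insertion tableaux," so $Q$ is fixed by fiat; and $d_i^R$ acts on permutations as the genuine operator $d_i^R$ described after Definition~\ref{restricted d}, which is either $d_i$ (and $Q\circ d_i = Q$ by (\ref{d P commute})) or the identity, so again $Q$ is preserved. I would then just run the two-line computation above for a general $\psi$, noting that the step $\psi(P(K_j(\pi))) = \psi(P(\pi))$ uses $P \circ K_j = P$ and the step $d_j(Q(K_j(\pi))) = d_j(Q(\pi))$ uses $Q \circ K_j = d_j \circ Q$ applied once more — wait, more carefully: $Q(K_j \psi(\pi)) = d_j(Q(\psi(\pi))) = d_j(Q(\pi))$ and $Q(\psi(K_j(\pi))) = Q(K_j(\pi)) = d_j(Q(\pi))$, so they agree.

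The only genuine subtlety is that the operators $\slink$, $\slink_*$, $d_i^R$ are a priori only \emph{partial} functions / are defined case-by-case (e.g. $d_i^R$ may act as the identity, and $\slink$ may act as the identity when $T = U_\lambda$), so I should confirm the argument is insensitive to that: since each operator is genuinely a well-defined function on all of $\SYT(n)$ (Lemma~\ref{slink consequence}(1) for $\slink, \slink_*$; Definition~\ref{restricted d} for $d_i^R$) and correspondingly on all of $S_n$, "$\psi$ applied to a permutation" is unambiguous, and the commutation identity holds verbatim regardless of which branch of the case definition is invoked. I expect this bookkeeping — verifying that $Q$ is untouched in every branch — to be the main (and only) obstacle, and it is routine. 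I would close by remarking that the identity $K_j \psi = \psi K_j$ on permutations immediately yields that each relation $\equiv^k$ (for $k=0,1,2$) on permutations is compatible with Knuth equivalence, since Knuth equivalence is generated by the $K_j$.
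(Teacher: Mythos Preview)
Your proposal is correct and follows essentially the same approach as the paper: both arguments observe that $K_j$ affects only the $Q$-coordinate of the RSK pair while each of $d_i^R$, $\slink$, $\slink_*$ (being defined on permutations \emph{via insertion tableaux}) affects only the $P$-coordinate, so the two operators trivially commute. Your version carries out the $P$/$Q$ computations explicitly, whereas the paper simply states that both composites yield the permutation with RSK image $(P(\phi(\pi)), Q(K_j(\pi)))$; the extra bookkeeping you flag about case branches is not actually needed, since by definition all three operators act via the insertion tableau.
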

\begin{proof}
To allow us to consider all three cases at once, let $\phi= d^{R}_i$, $\slink$, or $\slink_{*}$. For any $\pi\in S_n$, it suffices to consider the effect of each function in question on $(P(\pi),Q(\pi))$. Recall that $K_j$ acts only on $Q(\pi)$, and this action is independent of $P(\pi)$. Similarly, $\phi$ acts only $P(\pi)$, and this action is independent of $Q(\pi)$. Hence $K_j\circ\phi$ and $\phi\circ K_j$, and both act via the RSK correspondence to yield the unique permutation sent to $(P(\phi(\pi)),Q(K_j(\pi)))$, completing the proof.
\end{proof}

\begin{remark}
\textup{
The action of Knuth equivalence is often introduced in relation to the process of jeu de taquin on skew tableaux. Had we taken this approach, the translated result would state that the action of $\slink, \slink_*$, and $d_i^R$ on standard skew tableaux commutes with jeu de taquin.} 
\end{remark}

\begin{cor}\label{positive perms}
For any $k=0,1,2$, let $\mc\subset S_n$ be a union of equivalence classes of $\equiv^{k}$ such that ${f=\sum_{\pi\in \mc} F_{\ID(\pi)}}$ is a symmetric function. Then  
\[ f=\sum_{\lambda\vdash n} c_\lambda s_\lambda, \]
\noindent
where $c_\lambda = |\{\pi \in \mc \colon \pi \in \SYam(\lambda)\}|$.
\end{cor}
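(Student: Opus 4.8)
The plan is to reduce Corollary~\ref{positive perms} to Theorem~\ref{symmetric to Schur} (the tableau version) using the RSK correspondence, exploiting the fact that $\equiv^k$ on permutations is defined precisely so that it only touches the insertion tableau. First I would fix $k \in \{0,1,2\}$ and a union of equivalence classes $\mc \subseteq S_n$ with $f = \sum_{\pi \in \mc} F_{\ID(\pi)}$ symmetric. The key structural observation is that $\mc$ decomposes according to recording tableaux: since $\pi \equiv^k \pi'$ requires $Q(\pi) = Q(\pi')$, the class of $\pi$ is $\{\sigma : P(\sigma) \equiv^k P(\pi),\ Q(\sigma) = Q(\pi)\}$. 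So for each standard Young tableau $Q$ appearing among $\{Q(\pi) : \pi \in \mc\}$, the set $\mc_Q := \{P(\pi) : \pi \in \mc,\ Q(\pi) = Q\}$ is a union of $\equiv^k$-classes of tableaux, and $\mc = \bigsqcup_Q \{\pi : P(\pi) \in \mc_Q,\ Q(\pi) = Q\}$.

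Next I would use $\ID(\pi) = \ID(P(\pi))$ from (\ref{ID and P commute}) to rewrite the generating function. For a fixed $Q$ of shape $\lambda$, as $\pi$ ranges over permutations with $P(\pi) \in \mc_Q$ and $Q(\pi) = Q$, the value $\ID(\pi) = \ID(P(\pi))$ ranges over $\{\ID(T) : T \in \mc_Q\}$ with the correct multiplicity, so $\sum_{\pi : P(\pi) \in \mc_Q,\, Q(\pi)=Q} F_{\ID(\pi)} = \sum_{T \in \mc_Q} F_{\ID(T)} =: g_Q$. Thus $f = \sum_Q g_Q$. The point I need is that each $g_Q$ is itself symmetric: since $f$ is symmetric, by Lemma~\ref{comp Schur fund} $f = \sum_{\pi \in \mc} s_{\beta(\pi)}$ in terms of composition Schur functions; but more directly, I can argue that summing $F$ over a union of $\equiv^k$-classes of \emph{tableaux} whose generating function is not symmetric cannot occur here — actually the cleaner route is: $g_Q$ need not be symmetric individually, so instead I would apply Lemma~\ref{beta to lambda} directly to the tableau multiset $\bigsqcup_Q \mc_Q$ and bypass symmetry of the pieces.

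Concretely: by Lemma~\ref{comp Schur fund} applied to $f$, we get $f = \sum_{\pi \in \mc} s_{\beta(\pi)}$. Now $\beta(\pi) = \beta(P(\pi))$ (again by (\ref{ID and P commute})), so $\sum_{\pi \in \mc} s_{\beta(\pi)} = \sum_Q \sum_{T \in \mc_Q} s_{\beta(T)}$, and each $\mc_Q$ is a union of $\equiv^k$-classes of tableaux, so Lemma~\ref{beta to lambda} gives $\sum_{T \in \mc_Q} s_{\beta(T)} = \sum_{U_\mu \in \mc_Q} s_\mu$. Summing over $Q$, $f = \sum_{\lambda \vdash n} c_\lambda s_\lambda$ where $c_\lambda = \sum_Q |\{T \in \mc_Q : T = U_\lambda\}| = |\{\pi \in \mc : P(\pi) = U_\lambda\}| = |\{\pi \in \mc : \pi \in \SYam(\lambda)\}|$, using the definition $\SYam(\lambda) = \{\pi : P(\pi) = U_\lambda\}$. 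This is exactly the claimed formula.

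The main obstacle is making sure the bookkeeping across recording tableaux is airtight — in particular that $\mc_Q$ really is a union of $\equiv^k$-classes of tableaux (this follows because $\equiv^k$ on permutations acts only on $P$ via a function that restricts to an involution/map on each $\SYT(\lambda)$, so the $P$-images of a permutation class form exactly an $\equiv^k$-tableau-class), and that the multiplicity count $c_\lambda$ correctly aggregates: each $\pi \in \mc$ with $\pi \in \SYam(\lambda)$ contributes via its $Q$-tableau $Q(\pi)$, and distinct such $\pi$ have either distinct $Q$ or are distinguished within the same $\mc_Q$ — but since $P(\pi) = U_\lambda$ is forced, $\pi$ is determined by $Q(\pi)$, so the count is just the number of valid recording tableaux, matching $\sum_Q \mathbf{1}[U_\lambda \in \mc_Q]$. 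Everything else is a direct translation through RSK, so I expect no genuine difficulty beyond this indexing care.
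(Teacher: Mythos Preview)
Your proposal is correct and follows essentially the same approach as the paper: push the permutation classes through $P$ to get a disjoint union of tableau $\equiv^k$-classes (your indexing by the recording tableau $Q$ is exactly how that disjoint union is kept track of), then count the copies of $U_\lambda$. The only cosmetic difference is that the paper cites Theorem~\ref{symmetric to Schur} directly on the multiset $\bigsqcup_Q \mc_Q$, whereas you inline that theorem's proof by invoking Lemma~\ref{comp Schur fund} and Lemma~\ref{beta to lambda}; your brief detour about whether each $g_Q$ is individually symmetric is unnecessary but harmless, since you correctly abandon it in favor of the composition-Schur route.
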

\begin{proof}
Each $\equiv^k$ equivalence class on permutations is sent to an $\equiv^k$ equivalence class on tableaux by the function $P$, while preserving inverse descent sets. Consider the disjoint union of these classes of tableaux. By Theorem~\ref{symmetric to Schur}, $f = \sum c_\lambda s_\lambda$, where $c_\lambda$ is the multiplicity of $U_\lambda$ in this disjoint union. The set $\SYam(\lambda)$ is defined to be the set of permutations sent to $U_\lambda$ by $P$. Thus, $c_\lambda = |\{\pi \in \mc \colon \pi \in \SYam(\lambda)\}|$.
\end{proof}

We end this section by considering generating functions of $\equiv^k$ classes. 

\begin{dfn}\label{fk def}
For $k=0, 1, 2$, let $\{f^{(k)}\}$ be the set of functions that can be realized as $\sum_{T \in \mc } F_{\ID(T)}$, where $\mc$ is a single equivalence class of $\equiv^k$.
\end{dfn}

\noindent
We use the notation $\{f^{(k)}\}$ because it is currently unknown how to best index these sets. In the next section, we will show that each $\{f^{(k)}\}$ forms a spanning set of the quasisymmetric functions.

\begin{conjecture}\label{f2 basis}
The set $\{f^{(2)}\}$ forms a basis for the quasisymmetric functions.
\end{conjecture}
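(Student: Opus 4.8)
The plan is to establish that $\{f^{(2)}\}$ is both a spanning set and linearly independent, with the dimension count doing the work. By the (forthcoming) result that each $\{f^{(k)}\}$ spans the quasisymmetric functions, $\{f^{(2)}\}$ already spans $\mathrm{QSym}_n$, so it suffices to prove that the number of distinct equivalence classes of $\equiv^2$ on $\SYT(n)$ equals $\dim \mathrm{QSym}_n$, which is the number of compositions of $n$, namely $2^{n-1}$ --- \emph{provided} distinct classes give distinct (and the right number of linearly independent) functions. So I would actually attack this in two halves: (i) count the $\equiv^2$-classes, and (ii) show the associated generating functions are linearly independent, ideally by exhibiting a triangularity with respect to a known basis.

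For the counting step, I would try to attach to each $\equiv^2$-class a canonical representative and show these representatives are in bijection with compositions of $n$. A natural candidate is to iterate $\slink$ and $\slink_*$ (or, more to the point, the $d_i^R$) to push a tableau toward a ``most superstandard-like'' form: given any $T$, repeatedly apply $\slink$ until reaching a tableau whose runs form superstandard tableaux as far as possible. The content of Theorem~\ref{restricted positive} is that $\equiv^0 \subseteq \equiv^1 \subseteq \equiv^2$, so $\equiv^2$-classes are unions of $\equiv^0$-classes; Lemma~\ref{0 classes} says every $\equiv^0$- (and $\equiv^1$-) class other than a singleton $\{U_\lambda\}$ has $s_{\beta}$-sum zero, and Lemma~\ref{beta to lambda} extends this to $\equiv^2$. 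The cleanest route to a basis is thus to show: each $\equiv^2$-class $\mc$ contains \emph{at most one} superstandard tableau $U_\lambda$, so that the symmetric representatives are indexed by partitions, and then to understand the remaining (non-symmetric) classes by refining the analysis of which $d_i^R$-moves are available. I expect one needs a normal form saying every $\equiv^2$-class has a unique element $T$ such that no $d_i^R$ strictly ``decreases'' a suitable statistic (e.g. the word $\beta(T)$ read as large as possible in dominance or lex order subject to being reachable), and that these normal forms biject with $\vDash n$ via $T \mapsto \beta(T)$.

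The linear-independence half I would handle by the standard ``unitriangularity'' trick: order compositions of $n$ by a refinement of dominance on the associated partition together with a secondary order, associate to each $\equiv^2$-class $\mc$ its normal-form composition $\alpha(\mc)$, and show that $\sum_{T\in\mc}F_{\ID(T)} = F_{\alpha(\mc)} + (\text{lower terms})$. Since $\{F_\alpha : \alpha \vDash n\}$ is a basis of $\mathrm{QSym}_n$ and there are exactly $2^{n-1}$ classes with distinct leading terms, the change-of-basis matrix is unitriangular, hence invertible, and $\{f^{(2)}\}$ is a basis. The main obstacle is the combinatorial heart of part (ii): proving that the $F_{\ID(T)}$, $T \in \mc$, really do sum to $F_{\alpha(\mc)}$ plus strictly lower terms --- equivalently, that within a class the maximal inverse-descent composition occurs with multiplicity one and dominates the others. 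This requires a careful run-by-run analysis of how $d_i^R$ (in the five flat forms of \eqref{flat restricted dual}) moves cells between adjacent runs, controlling $\beta(T)$; I would isolate this as the key lemma and prove it by induction on $n$ using the two-row reduction already exploited in the proof of Theorem~\ref{restricted positive}. A secondary difficulty is making sure the normal-form map is well-defined (independent of the order in which moves are applied), which should follow from a confluence/diamond argument built on Lemma~\ref{slink commute} and Proposition~\ref{star involution}, suitably upgraded to the $d_i^R$ setting.
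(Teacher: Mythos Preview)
The statement you are attempting to prove is labeled a \emph{Conjecture} in the paper, and the paper offers no proof: immediately after stating it, the authors write only that it ``has been verified up to degree 11 with the aid of a computer.'' So there is no paper proof to compare against, and your proposal is an outline of a possible attack on an open problem rather than a reconstruction of an existing argument.

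As an outline, your strategy is sensible in shape---spanning is indeed handled later via the quasisymmetric Schur functions (Theorem~\ref{bar d coarse} and its corollary), so the content is linear independence---but the two places you flag as ``the main obstacle'' and ``a secondary difficulty'' are exactly the parts that would constitute a proof, and you have not supplied them. In particular: (i) you assert that $\equiv^2$-classes on $\SYT(n)$ should biject with compositions of $n$ via a normal-form map $T\mapsto\beta(T)$, but you give no argument that the number of classes is $2^{n-1}$, nor that distinct classes yield distinct functions (a priori two classes could have the same multiset of inverse descent sets); (ii) the proposed unitriangularity---that each class has a unique $T$ with $\beta(T)$ maximal in some order, occurring with multiplicity one---is stated as a hoped-for lemma with no proof sketch beyond ``a careful run-by-run analysis.'' The confluence argument you gesture at for well-definedness of the normal form would need genuinely new commutation relations among the $d_i^R$, not just Lemma~\ref{slink commute}, which concerns $\slink$ and $\slink_*$ rather than the full set of generators of $\equiv^2$. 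In short, your proposal correctly identifies where the difficulty lies but does not resolve it; this is consistent with the statement's status as a conjecture.
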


\noindent
This conjecture has been verified up to degree 11 with the aid of a computer. 

\section{Extending to the quasisymmetric Schur functions}\label{quasis} 
We turn our attention to the combinatorics of quasisymmetric Schur functions, placing them near the top of our poset of quasisymmetric functions.

\subsection{preliminaries}

A \emph{standard reverse composition tableau} $T$ is a standard filling of a composition shape that is increasing down the first column, decreasing across rows from left to right, and that satisfies the following additional property.
Given any two values $a>b$ that are adjacent in a row of $T$, no value in $(b,a)$ may occur below and in the same column as $b$. If $a$ is the furthest right value in its row, treat the entry immediately to its right as 0. The set of standard reverse composition tableaux is denoted $\SRCT(\alpha)$. The \emph{bent reading word} of $T\in\SRCT(\alpha)$, $\bw(T)$, is given by reading down columns from right to left and then up the leftmost column. Examples are given in Figure~\ref{tau2}.
We may now the \emph{quasisymmetric Schur functions} by 

\begin{equation}\label{quasi schur def}
\qs_\alpha=\sum_{T\in \SRCT(\alpha)} F_{\ID(\bw(T))}.
\end{equation}

\noindent
Importantly, $\{\qs_\alpha\}$ is a basis for the quasisymmetric functions. For a more thorough treatment of the quasisymmetric Schur functions, see \citep{Luoto2013introduction}.

\begin{remark}
\textup{
Traditionally, the inverse descent set of $T\in\SRCT(\alpha)$ is the set of $i$ such that $i+1$ is in a column weakly to the right of $i$. This is equivalent to defining the inverse descent set via the bent reading word.} 
\end{remark}

A \emph{standard reverse Young tableau} of partition shape $\lambda$ is a standard filling of shape $\lambda$ that is decreasing up columns and across rows from left to right. The set of all standard reverse Young tableau is denoted $\SRT(\lambda)$.
The \emph{reverse column word} of $T\in\SRT(\lambda)$, $\cw(T)$, will be the reverse of the normal convention, proceeding up columns, right to left.
It can then be shown (see \citep[Ch. 3.2]{Luoto2013introduction}) that the basis of Schur functions may be rewritten as
\begin{equation}\label{reverse Schur def}
s_\lambda=\sum_{T\in \SRT(\lambda)} F_{\ID(\cw(T))}.
\end{equation}


The set $\mc(\alpha) \subset \SRT(\lambda(\alpha))$ is given by applying Mason's bijection to $\SRCT(\alpha)$, sorting the columns, and bottom justifying. We denote this map by $\rho$. Importantly, $\rho$ preserves the set of values in each column. Further, it preserves inverse descent sets, since both SRT and SRCT have the property that $i$ is an inverse descent of a filling if and only if $i+1$ is in a column weakly to its right.  Thus,
\begin{equation}
\qs_\alpha=\sum_{T\in \mc(\alpha)} F_{\ID(T)}.
\end{equation}

\noindent
The map $\rho$ also has an inverse defined as follows. Working one column at a time from left to right, start with the second column. Now consider each value $x$ in the column in order from greatest to least.  Out of all remaining locations in the column, place $x$ to the right of highest cell whose value is greater than $x$. Figure~\ref{tau2} gives examples, while the original definition is in \citep{mason2008decomposition}.


\subsection{Transitive actions on $\SRCT(\alpha)$}\label{maximal SRCT} 

For any cell $c$ of $\alpha$, the set of cells weakly below $c$ in its column or weakly above $c$ in the column to its left is called an \emph{$\alpha$-pistol}, as in Figure~\ref{pistols}.


\begin{figure}[h]
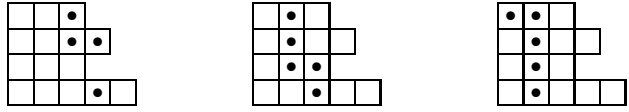
\label{pistols}
   \begin{center} 
   \ytableausetup{smalltableaux}
\begin{ytableau}
{}&{}&\bullet \\
{}&{}&\bullet& \bullet \\
{}&{}&{}  \\
{}&{}&{}&\bullet&{}
\end{ytableau}
\hspace{.5in}
\begin{ytableau}
{}&\bullet&{} \\
{}&\bullet&{}&{} \\
{}&\bullet&\bullet  \\
{}&{}&\bullet&{}&{}
\end{ytableau}
\hspace{.5in}
\begin{ytableau}
\bullet&\bullet&{} \\
{}&\bullet&{}& {} \\
{}&\bullet&{}  \\
{}&\bullet&{}&{}&{}
\end{ytableau}
\hspace{.5in}
%
%
   \end{center}
  \caption{Three $\alpha$-pistols of $\alpha=(5, 3, 4, 3)$.}
 \label{content}
\end{figure}

\noindent
Let 
 $\tilde d_i:S_n \rightarrow S_n$ be the involution that cyclically permutes the values $i-1, i,$ and $i+1$ as
\begin{equation}
\begin{split}
\tilde d_i(\ldots i \ldots i-1 \ldots i+1 \ldots) = (\ldots i-1 \ldots i+1 \ldots i \ldots), \\
\tilde d_i(\ldots i \ldots i+1 \ldots i-1 \ldots) = (\ldots i+1 \ldots i-1 \ldots i \ldots),
\end{split}
\end{equation}
\noindent and that acts as the identity if $i$ occurs between $i-1$ and $i+1$. For example, $\tilde d_3 \circ \tilde d_2(4123) = \tilde d_3(4123) = 3142.$ Let $d_i$ and $\tilde d_i$ act on $\SRCT(\alpha)$ via the bent reading word.

We use $d_i$ and $\tilde d_i$ to define an involution on $\SRCT(\alpha)$. 

\begin{equation} \label{def: D bar}
D_i^{Q}(T) =
 \begin{cases}
  T & \parbox{9cm}{\textup{if $i-1$, $i$, and $i+1$ are contained in two cells of the first column and one in the second column,}}\\
\tilde d_i(T) &  \textup{else, if $i-1$, $i$, and $i+1$ are contained in an $\alpha$-pistol,}\\
 d_i(T) & \textup{otherwise.}
\end{cases}
\end{equation}

\noindent 
 As an example, we may take $\bw(T)=\pi=3265874$ and $\alpha=(3,2,3)$ as in Figure \ref{tau2}. Examples of each of the three cases are as follows. First, $D^Q_7(T)=T$.  Then,  $D^Q_6(T)=\tilde d_6(T)$, which has bent reading word 3257468. Finally, $D^Q_3(T)= d_3(T)$, which has bent reading word 4265873. 

\begin{figure}[H]
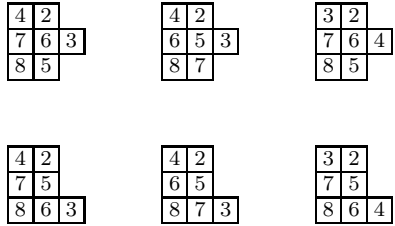

  \ytableausetup{smalltableaux,aligntableaux=bottom}
  \[ 
   \ytableaushort{42, 763,85} \hspace{.4in} 
   \ytableaushort{42, 653, 87} \hspace{.4in} 
   \ytableaushort{32, 764, 85} 
  \] \vspace{.1cm}
    \[ 
   \ytableaushort{42, 75,863} \hspace{.4in} 
   \ytableaushort{42, 65, 873} \hspace{.4in} 
   \ytableaushort{32, 75, 864} 
  \]
  \caption{Three standard reverse composition tableaux of shape $\alpha=(2,3,2)$ above their image via Mason's bijection. From the left, $T\in\SRCT(\alpha)$ with bent reading word $\pi=3265874$, followed by $D^Q_6(T)$, and then $D^Q_3(T)$.}
\label{tau2}
\end{figure}
%

\begin{remark}
\textup{
The function $D_i^Q$ is closely related to its namesake, $D_i$, defined in \citep{assaf2015dual}.
}
\end{remark}

\begin{prop} \label{D transitive}
The set of involutions $D_i^Q$ preserves and acts transitively on $\SRCT(\alpha)$.
\end{prop}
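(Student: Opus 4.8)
The plan is to prove two things: first, that each $D_i^Q$ maps $\SRCT(\alpha)$ into itself (well-definedness), and second, that the group generated by the $D_i^Q$ acts transitively. For well-definedness, I would transport the problem to standard reverse Young tableaux via Mason's bijection $\rho$. The key observation is that $\rho$ preserves the set of values in each column, and by Theorem~\ref{Haiman} the ordinary elementary dual equivalences $d_i$ act transitively on $\SRT(\lambda)$ and are exactly what's needed there. But I need to check how $d_i$ and $\tilde d_i$ on the bent reading word of an SRCT correspond to operations on the image in $\SRT$. The three cases in \eqref{def: D bar} are designed precisely so that the image stays within $\mc(\alpha) \subset \SRT(\lambda(\alpha))$: the first case (when $i-1,i,i+1$ sit in two cells of the first column and one in the second) is where a naive $d_i$ would break the SRCT column structure, so we freeze; the $\alpha$-pistol case is where we must use the ``twisted'' cyclic permutation $\tilde d_i$ to land back in a valid tableau; and otherwise $d_i$ works. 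I would verify each case by a direct check that the result is again a standard reverse composition tableau — checking the decreasing-rows, increasing-first-column, and the pistol-type additional condition on adjacent row entries.

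For transitivity, the strategy is to show that from any $T \in \SRCT(\alpha)$ one can reach a fixed canonical element — the natural candidate being the SRCT whose bent reading word is, say, the identity or the reverse identity (equivalently, the image under $\rho^{-1}$ of the superstandard-type reverse tableau of shape $\lambda(\alpha)$). Since $\rho$ is a bijection onto $\mc(\alpha)$ and preserves column contents and inverse descent sets, it suffices to show the $D_i^Q$ act transitively on $\mc(\alpha)$. On $\mc(\alpha)$, the relevant moves are dual equivalences that preserve the multiset of entries in each column; I would argue that the ordinary dual equivalence graph on $\SRT(\lambda(\alpha))$, restricted to the connected component structure imposed by fixing column contents, is connected precisely on $\mc(\alpha)$, and that the moves $D_i^Q$ realize exactly these column-content-preserving dual equivalences. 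Concretely: given $T, T' \in \mc(\alpha)$, by Theorem~\ref{Haiman} there is a sequence of $d_i$'s from $T$ to $T'$ inside $\SRT(\lambda(\alpha))$; I would show this sequence can be modified (replacing $d_i$ by $\tilde d_i$ or by the identity in the appropriate cases, possibly inserting extra moves) so that every intermediate tableau stays in $\mc(\alpha)$, mirroring the structure of the proof of Theorem~\ref{restricted positive} where a two-row reduction handled the analogous issue.

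The main obstacle I anticipate is the bookkeeping in the $\alpha$-pistol case — verifying that $\tilde d_i$ applied via the bent reading word genuinely produces a valid SRCT, and more subtly, that the image under $\rho$ of the $\tilde d_i$-move is the ``right'' dual-equivalence-type move on $\SRT$ (as opposed to $d_i$). This is because the bent reading word threads through columns in a non-obvious order, so the positions of $i-1, i, i+1$ in $\bw(T)$ relative to each other depend delicately on the geometry of the $\alpha$-pistol, and one must confirm that cyclic permutation rather than transposition of the outer values is what keeps the pistol-adjacency condition intact. I would handle this by reducing, as in the proof of Theorem~\ref{restricted positive}, to a small local configuration (the cells of the pistol containing $i-1,i,i+1$ plus their immediate row-neighbors) and checking the finitely many cases by direct inspection, using the example computations already given after \eqref{def: D bar} as a sanity check. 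The transitivity half, by contrast, should follow fairly mechanically once well-definedness and the correspondence with $\mc(\alpha)$ are in hand, leveraging Haiman's transitivity theorem as a black box.
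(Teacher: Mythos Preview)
Your approach to well-definedness (direct case analysis on the positions of $i-1,i,i+1$) is fine and is what the paper does; the paper in fact dismisses this half as routine.

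Your transitivity argument, however, rests on a false premise. You claim that on $\mc(\alpha)$ the relevant moves are ``dual equivalences that preserve the multiset of entries in each column'' and that $\mc(\alpha)$ is cut out inside $\SRT(\lambda(\alpha))$ by fixing column contents. Neither is true: different tableaux in $\SRCT(\alpha)$ generally have different column contents, and $D_i^Q$ typically changes them --- in the paper's own example following \eqref{def: D bar}, applying $D_3^Q$ swaps the values $3$ and $4$ between the first and third columns. So there is no column-content description of $\mc(\alpha)$ for you to feed into Haiman's theorem, and your plan to ``modify a Haiman path to stay in $\mc(\alpha)$'' has no workable target: you have no intrinsic characterization of $\mc(\alpha)$ inside $\SRT(\lambda(\alpha))$ other than ``image of $\rho$,'' at which point you are back to working directly in $\SRCT(\alpha)$. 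The logical flow you propose is also reversed relative to the paper: there, Proposition~\ref{D transitive} is proved first, directly on $\SRCT(\alpha)$, and transitivity on $\mc(\alpha)$ (Corollary~\ref{bar d transitive}) is deduced from it via Proposition~\ref{p commutes}.

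The paper's transitivity proof is a short direct induction on $n$: it suffices to show that the value $1$ can be moved by $D_i^Q$ from the top row to any cell where a $1$ may legally sit, after which the inductive hypothesis rearranges $2,\ldots,n$. This is done by using the inductive hypothesis to place $2$ and $3$ strategically so that a single $D_2^Q$ relocates the $1$, handled in four geometric cases (target column strictly left; same column below; one column right, which is shown to be vacuous; at least two columns right).
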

\begin{proof}
The first part, that $D_i^{Q}$ sends $\SRCT(\alpha)$ to itself, is a straight forward matter of considering the possible arrangements of $[i-1, i+1]$, and it is recommended for the reader. We will instead focus on proving the transitivity of the action of $D_i^{Q}$ on $\SRCT(\alpha)$.

The case where $\alpha \vDash1$ is trivial, so we may proceed by induction, letting $\alpha\vDash n$ and assuming the result for compositions of $n-1$. It suffices to show that the value 1 may be moved from the top row to any other possible location of 1. From there, the inductive hypothesis completes the claim. We will consider four cases, as displayed in Figure~\ref{SRCT transitivity}.

First suppose that we wish to move 1 to a column strictly to the left of its position in the top row. Place the 2 in the desired cell and place 3 directly to the left of 1. In this case, $D_i^{Q}$ acts via $d_i$, swapping the 1 with the 2.

Second, suppose we wish to move 1 to a cell below it and in the same column. Place the 3 in the desired cell and place 2 directly to the left of 1. In this case $D_i^{Q}$ acts via $\tilde d_i$, cycling the three numbers.

Next, notice that no cell a single column to the right of 1 is an allowable cell to place 1. This follows by considering the triple formed by 1's original location, the lack of a cell immediately to the right of that, and any desired new location.

Finally, suppose we wish to move 1 at least two columns to the right. Applying the second case, we may move 1 as far down as possible within its own column. Then place the 2 in the desired cell, and place 3 in any allowable location between the 1 and the 2 in the bent reading word. In this fourth case, $D_i^{Q}$ acts via $d_i$, swapping the 1 with the 2. 
\end{proof}

\begin{figure}[H]
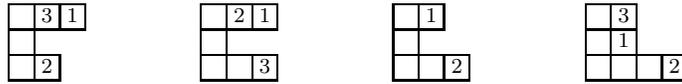

  \ytableausetup{smalltableaux,aligntableaux=bottom}
  \[ \ytableaushort{{}31,{}\none,{}2}  \hspace{.6in}
   \ytableaushort{{}21,{}\none,{}{} 3} \hspace{.6in} 
   \ytableaushort{{}1,{}, {}{}2} \hspace{.6in} 
   \ytableaushort{{}3,{}1,{}{}{}2}
  \]
  \caption{The four cases, in order, demonstrating the transitivity of the action of $D_i^{Q}$ on $\SRCT(\alpha)$.}
\label{SRCT transitivity}
\end{figure}

We want to express the action of $D_i^{Q}$ on $\mc(\alpha)$ via Mason's bijection. Given $T\in\SRT(\lambda)$, define the \emph{elementary quasi-dual equivalence relations} as

\begin{equation} \label{def: d bar}
d^{Q}_i(T) =
 \begin{cases}
 T & \parbox{9cm}{\textup{if $i-1$, $i$, and $i+1$ are in two cells of the first column and one in the second column,}}\\
 d_i(T) & \textup{otherwise.}
\end{cases}
\end{equation}

\noindent
As with earlier involutions, we define the action of $d^{Q}_i$ on a permutation via insertion tableaux.

\begin{prop} \label{p commutes}
For any $T\in\SRCT(\alpha)$,
\[ d^{Q}_i \circ \rho(T)= \rho\circ D_i^{Q}(T)
\]
\end{prop}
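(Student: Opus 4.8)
The plan is to unwind both sides of the claimed identity in terms of the reading words used to transfer the relevant actions, and to check that the three-way case split defining $D_i^Q$ matches the two-way case split defining $d_i^Q$ after passing through $\rho$. First I would recall that $\rho$ preserves the set of values in each column (stated just before the definition of the inverse of $\rho$), and that $d_i^Q$ acts on $T\in\SRT(\lambda)$ via the reverse column word $\cw(T)$ while $D_i^Q$ acts on $T\in\SRCT(\alpha)$ via the bent reading word $\bw(T)$. The key preliminary observation is that, for a triple $\{i-1,i,i+1\}$, whether these three values lie in two cells of the first column and one in the second column is a property of the \emph{set of values in each column}, hence is preserved by $\rho$; likewise whether the triple lies in a single $\alpha$-pistol versus in some other configuration needs to be tracked.

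Next I would separate into the cases of \eqref{def: D bar}. In the first case, $\{i-1,i,i+1\}$ occupies two cells of the first column and one in the second column; since $\rho$ preserves column contents, the same holds for $\rho(T)$, so by \eqref{def: d bar} $d_i^Q$ acts as the identity on $\rho(T)$, and by \eqref{def: D bar} $D_i^Q$ acts as the identity on $T$, giving $d_i^Q(\rho(T))=\rho(T)=\rho(D_i^Q(T))$. In the remaining cases $D_i^Q$ acts by $\tilde d_i$ or $d_i$ and $d_i^Q$ acts by $d_i$; here I must show $d_i\circ\rho(T)=\rho(D_i^Q(T))$. The heart of the argument is that $\rho$ column-by-column reorders entries, and $d_i$ (respectively $\tilde d_i$) only permutes the three values $i-1,i,i+1$. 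When the three values lie in three distinct columns of both $T$ and $\rho(T)$, applying $d_i$ on $\cw(\rho(T))$ produces exactly the tableau with those three columns' contents updated, which coincides with $\rho$ applied to the $D_i^Q$-image (where the bent reading word sees the same relative cross-column order). When two of the values share a column (the $\alpha$-pistol case, where $D_i^Q=\tilde d_i$ on the bent reading word), one checks that $\rho$ carries the pistol configuration in $\SRCT$ to a configuration in $\SRT$ where the cyclic action $\tilde d_i$ on $\bw$ is realized by the ordinary $d_i$ on $\cw$ — this is where the difference between the bent reading word (down columns right-to-left, then up the leftmost column) and the reverse column word (up columns, right-to-left) must be reconciled, and it explains why $\tilde d_i$ appears on the $\SRCT$ side but $d_i$ suffices on the $\SRT$ side.

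Concretely I would fix $\fl(\pi|_{[i-1,i+1]})$ for $\pi=\bw(T)$ and run through the finitely many relative arrangements of $i-1,i,i+1$ in a composition diagram — same column (stacked), adjacent columns, far apart — and in each arrangement compute both $\rho(D_i^Q(T))$ and $d_i^Q(\rho(T))$ directly, using the explicit inverse of $\rho$ to locate cells. Because $\rho$ only rearranges within columns and $d_i,\tilde d_i$ only touch a three-element value window, the verification localizes to at most three columns, so the case analysis is finite and bounded.

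\textbf{Main obstacle.} I expect the main difficulty to be the bookkeeping in the $\alpha$-pistol case: showing precisely that Mason's bijection sends the pistol configuration (two of $i-1,i,i+1$ in one column, the third in the column to the left, in pistol position) to an $\SRT$ configuration on which the cyclic three-cycle $\tilde d_i$ read along $\bw$ becomes the adjacent transposition $d_i$ read along $\cw$. This requires carefully comparing how $\bw$ and $\cw$ order the three relevant cells and confirming that the column-sorting step of $\rho$ does not move any of $i-1,i,i+1$ past each other in a way that would break the correspondence. The other cases are routine once the column-preservation property of $\rho$ and the locality of $d_i$ are invoked.
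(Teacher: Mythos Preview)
Your plan would work, but it misses the one observation that makes the paper's argument short and dissolves what you flag as the main obstacle. The paper begins by noting that an element of $\SRT(\lambda)$ \emph{and} an element of $\SRCT(\alpha)$ are each completely determined by the list of values in each column (for $\SRCT$ this is exactly what the algorithm for $\rho^{-1}$ encodes). Since $\rho$ preserves column contents, it therefore suffices to check that $D_i^{Q}$ and $d_i^{Q}$ have the same effect on the \emph{column contents} of $T$ and $\rho(T)$---no cell positions, no reconciliation of $\bw$ versus $\cw$. With this reduction the pistol case is immediate: assuming without loss of generality that $i$ shares a column with one of $i\pm1$, any nontrivial action of either involution must move $i$ to the other column (it always changes the column of $i$), so the column-content effect is forced to agree regardless of whether $D_i^{Q}$ happened to use $d_i$ or $\tilde d_i$ on the bent word. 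Your three-column cell-level verification would eventually reach the same conclusion, but the column-determination fact is what lets you skip it.

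There is also a small lacuna in your case split: you pass directly from ``first-column condition, both trivial'' to ``otherwise $D_i^{Q}$ acts by $\tilde d_i$ or $d_i$ and $d_i^{Q}$ acts by $d_i$,'' but you still need to handle the sub-case where $d_i$ (or $\tilde d_i$) is itself the identity because $i$ lies between $i-1$ and $i+1$ in reading order. The paper disposes of all trivial behavior at once by observing that $\rho$ preserves inverse descent sets, so $d_i$ and $\tilde d_i$ act trivially on $T$ if and only if they act trivially on $\rho(T)$, and the first-column condition transfers because $\rho$ preserves column contents.
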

\begin{proof}
First notice that each $S\in\SRT(\lambda)$ and each $T\in \SRCT(\alpha)$ are completely determined by the list of values that belong in each column. In the latter case, this is made clear by the algorithm for $\rho^{-1}$. Because $\rho$ preserves the list of values in each column, it suffices to show that $D_i^{Q}$ and $d^{Q}_i$ have the same effect on the values in the columns of $T$ and $\rho(T)$, respectively.

Consider the case where $D_i^Q$ acts trivially on $T$. Since $\rho$ preserves inverse descent sets, $d_i$ and $\tilde d_i$  act trivially on $T$ if and only if they act trivially on $\rho(T)$. Furthermore, $\rho$ preserves the sets of values in each column, so $D_i^{Q}$ acts trivially on $T$ if and only if $d^{Q}_i$ acts trivially on $\rho(T)$.

 Assume that $D_i^Q$ and $d_i^Q$ do not act trivially. In particular, $i-1, i$, and $i+1$ may not all be in the same column, since this would require $i$ to be between $i-1$ and $i+1$ in the reading word. If $i-1, i$, and $i+1$ are all in different columns, then $d^{Q}_i$ and $D_i^{Q}$ will both act by swapping the two outside values, leaving the value in the middle column in place. Thus, it suffices to consider the case where $i-1, i$, and $i+1$ are in exactly two columns.

Without loss of generality, assume  that $i$ shares its column with either $i-1$ or $i+1$ --- if this were not the case, we could consider $D_i^{Q} (T)$ instead. Since $d^{Q}_i$ and $D_i^{Q}$ always change the column of $i$, they must both move $i$ into the same column, forcing $i-1$ and $i+1$ to share the remaining column. Since $D_i^{Q}$ and $d^{Q}_i$ have the same action on the values in each column of $T$ and $\rho(T)$, respectively, we must have  
$d^{Q}_i \circ \rho(T)= \rho\circ D_i^{Q}(T)$.
\end{proof}

\begin{cor}\label{bar d transitive}
The set of $d^{Q}_i$ preserve and act transitively on $\mc(\alpha)$.
\end{cor}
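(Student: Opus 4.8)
The plan is to deduce Corollary~\ref{bar d transitive} directly from Proposition~\ref{D transitive} and Proposition~\ref{p commutes} by transporting the transitive action of $\{D_i^Q\}$ on $\SRCT(\alpha)$ across Mason's bijection $\rho$. Since $\rho \colon \SRCT(\alpha) \to \mc(\alpha)$ is a bijection by construction (it has the explicit inverse recalled before Section~\ref{maximal SRCT}), it is a priori enough to know how the $d_i^Q$ move elements of $\mc(\alpha)$ relative to how the $D_i^Q$ move elements of $\SRCT(\alpha)$, and that is exactly the content of Proposition~\ref{p commutes}.

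\begin{proof}
By Proposition~\ref{D transitive}, the involutions $D_i^Q$ preserve $\SRCT(\alpha)$ and act transitively on it. Recall that $\rho \colon \SRCT(\alpha) \to \mc(\alpha)$ is a bijection. By Proposition~\ref{p commutes}, $d_i^Q \circ \rho = \rho \circ D_i^Q$ as maps on $\SRCT(\alpha)$; in particular, for every $S \in \mc(\alpha)$ we have $d_i^Q(S) = \rho\bigl(D_i^Q(\rho^{-1}(S))\bigr) \in \mc(\alpha)$, so each $d_i^Q$ preserves $\mc(\alpha)$. For transitivity, let $S, S' \in \mc(\alpha)$ and set $T = \rho^{-1}(S)$, $T' = \rho^{-1}(S')$. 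By Proposition~\ref{D transitive} there is a sequence of indices $i_1, \ldots, i_m$ with $T' = D_{i_m}^Q \circ \cdots \circ D_{i_1}^Q(T)$. Applying Proposition~\ref{p commutes} repeatedly,
\[
S' = \rho(T') = \rho\bigl(D_{i_m}^Q \circ \cdots \circ D_{i_1}^Q(T)\bigr) = d_{i_m}^Q \circ \cdots \circ d_{i_1}^Q\bigl(\rho(T)\bigr) = d_{i_m}^Q \circ \cdots \circ d_{i_1}^Q(S).
\]
Hence the set of $d_i^Q$ acts transitively on $\mc(\alpha)$.
\end{proof}

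There is essentially no obstacle here: the two cited propositions were engineered precisely so that this corollary falls out by conjugation through $\rho$. The only point meriting a moment's care is that $\rho$ is genuinely a bijection onto $\mc(\alpha)$ rather than merely a well-defined map — this is why the explicit description of $\rho^{-1}$ (place each value $x$, in decreasing order within its column, immediately to the right of the highest cell whose value exceeds $x$) is recalled in the preliminaries. With that in hand, the commutation relation of Proposition~\ref{p commutes} immediately upgrades "$D_i^Q$ preserves $\SRCT(\alpha)$" to "$d_i^Q$ preserves $\mc(\alpha)$" and transports the transitive orbit structure verbatim. One could alternatively package the argument as the general observation that a transitive family of maps on a set, conjugated by a bijection, yields a transitive family on the image; I would state it in the concrete form above to keep the exposition self-contained.
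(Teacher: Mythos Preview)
Your proof is correct and takes essentially the same approach as the paper, which simply states that the corollary follows from Proposition~\ref{D transitive} and Proposition~\ref{p commutes}. You have spelled out the transport-of-structure argument through $\rho$ in more detail than the paper does, but the underlying reasoning is identical.
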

\begin{proof}
The statement follows from Lemma~\ref{D transitive} and Poposition~\ref{p commutes}.
\end{proof}

\begin{remark}
\textup{
In \citep{bessenrodt2011skew}, there is a similar analysis of the relationship between $\rho$ and dual equivalence on SRT for the sake of proving a quasisymmetric Littlewood-Richardson rule. We have made these relationships more explicit by introducing the functions $d_i^Q$ and $D_i^Q$.
}
\end{remark}

Recall from Definition~\ref{fk def} that $\{f^{(k)}\}$ is the set of functions of the form $f=\sum_{T\in C} F_{\ID(T)}$, where $\mc$ is any $\equiv^k$  class. Each $\equiv^k$ also defines relations on $\SRT(\lambda)$ via the reverse column reading word. In this way, it makes sense to compare the equivalence classes generated by $d^Q_i$ to the classes of $\equiv^k$ and  to compare $\{\qs_\alpha\}$ to $\{f^{(k)}\}$.

\begin{thm}\label{bar d coarse}
Quasi-dual equivalence is a coarsening of $\equiv^k$, for each $k=0,1,2$. In particular, each quasisymmetric Schur function is a nonnegative sum of functions in $\{f^{(k)}\}$, for each $k=0,1,2$.
\end{thm}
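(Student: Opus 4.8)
The plan is to prove the structural first sentence — that quasi-dual equivalence coarsens each $\equiv^k$ — and then obtain the ``in particular'' clause formally. Saying that quasi-dual equivalence coarsens $\equiv^k$ means exactly that $\equiv^k$ refines it, i.e. each $\equiv^k$ class sits inside a single quasi-dual class; equivalently, $\equiv^k$ is contained in quasi-dual equivalence as relations. Since Theorem~\ref{restricted positive} gives $\equiv^0 \subseteq \equiv^1 \subseteq \equiv^2$, it suffices to treat the coarsest case $k=2$: once $\equiv^2$ refines quasi-dual equivalence, so do the finer relations $\equiv^1$ and $\equiv^0$. As $\equiv^2$ is generated by the $d^R_i$ and quasi-dual equivalence by the $d^Q_i$, the whole statement reduces to a local claim on $\SRT(\lambda)$ (where $\equiv^2$ is transported via the reverse column word): for each $T$ and each admissible $i$, the generating pair $\{T, d^R_i(T)\}$ lies in one quasi-dual class.

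If $d^R_i(T)=T$ there is nothing to check, so assume $d^R_i(T)=d_i(T)\neq T$. The goal is then to show that $d^Q_i$ is unblocked, for then $d^Q_i(T)=d_i(T)=d^R_i(T)$ realizes the move in a single quasi-dual step. I would prove the contrapositive, which is the crux: when $d^Q_i$ is blocked, $d^R_i$ is already the identity, so no detour paths are ever needed. By definition the blocking hypothesis says that $i-1,i,i+1$ occupy two cells of the first column and one cell of the second column of $T$. Since these are three consecutive integers and the columns of an SRT strictly increase downward, the two first-column cells must be vertically adjacent, as any cell strictly between them would carry a value strictly between two of $i-1,i,i+1$ and hence equal the third, which is barred from the first column. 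Thus the configuration is determined by which of $i-1,i,i+1$ is the second-column value.

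The heart is then a short case analysis run through the reverse column word $\cw$ (which reads the second column entirely before the first, each column bottom to top) together with the SRT rule $j\in\ID(T)\iff j+1$ lies in a column weakly to the right of $j$. If the second-column value is $i+1$, the local $\cw$-order of $i-1,i,i+1$ puts $i$ in the middle, so $d_i$, and hence $d^R_i$, is the identity. If instead the second-column value is $i-1$ or $i$, then $i+1$ occupies a first-column cell and reads in the middle of the triple, so $d_i$ merely exchanges the cells of $i-1$ and $i$ and fixes that of $i+1$. Because $i+1$ is in the first column, $i+2$ is automatically weakly to its right, giving $i+1\in\ID(T)$; and since $d_i$ leaves the cell of $i+1$ in the first column, the same holds for $d_i(T)$, so $i+1\in\ID(T)\cap\ID(d_i(T))$ and $d^R_i$ is the identity by its definition. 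This establishes the contrapositive, hence $\equiv^2$ is contained in quasi-dual equivalence, and by the reduction the coarsening statement holds for every $k$.

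For the ``in particular,'' Corollary~\ref{bar d transitive} shows that $\mc(\alpha)$ is a single quasi-dual class, and $\qs_\alpha=\sum_{T\in\mc(\alpha)}F_{\ID(T)}$. By the coarsening just proved, $\mc(\alpha)$ is a disjoint union of $\equiv^k$ classes; summing $F_{\ID(T)}$ over each constituent class yields a function in $\{f^{(k)}\}$, exhibiting $\qs_\alpha$ as a nonnegative integer sum of functions in $\{f^{(k)}\}$. The main obstacle I anticipate is the bookkeeping of the case analysis: correctly reading off the local $\cw$-order in each of the three configurations and resisting the temptation to construct longer quasi-dual detours. The key simplification is precisely that the two blocking conditions align — quasi-dual blocking forces either $d_i$ or $d^R_i$ to be trivial — so that each generating move of $\equiv^2$ is already a single quasi-dual move.
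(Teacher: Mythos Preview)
Your proposal is correct and follows essentially the same approach as the paper: reduce to $k=2$ via Theorem~\ref{restricted positive}, then prove the contrapositive that whenever $d^Q_i$ is blocked on $T\in\SRT(\lambda)$, the map $d^R_i$ already acts trivially, so every nontrivial $d^R_i$-move is a single $d^Q_i$-move. The only difference is tactical: you verify this by an explicit three-case analysis on which of $i-1,i,i+1$ sits in the second column, whereas the paper argues more uniformly that two of $i-1,i,i+1$ in the first column forces $|\ID(T)\cap[i-1,i+1]|\geq 2$ and $|\ID(d_i(T))\cap[i-1,i+1]|\geq 2$, and then checks against the list~(\ref{flat restricted dual}) that no nontrivial $d^R_i$-pattern has this property.
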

\begin{proof}
In light of Theorem~\ref{restricted positive}, it suffices to consider $\equiv^{2}$. Notice that $d_i^R$ and $d^{Q}_i$ have the same definition when they act nontrivially on some $T\in \SRT(\lambda)$. It then suffices to show that when $d^{Q}_i$ acts trivially on $T\in\SRT(\lambda)$, so does $d_i^R$. If $d_i$ acts trivially, then the result is immediate, so assume otherwise.
We may thus assume that two values of $[i-1,i+1]$ are in the first column of $T$. Notice that the action of $d_i$ will not change the number of values in $[i-1,i+1]$ that are in the first column and that $i\notin\ID(T)$ implies that $i$ is not in the first column. Hence, if two values of $[i-1, i+1]$ are in the first column of $T$, then $|\ID(T)\cap[i-1,i+1]|=|\ID(d_i(T))\cap[i-1,i+1]|\geq 2$. Comparing with $(\ref{flat restricted dual})$ shows that this never happens when $d_i^R$ acts nontrivially. Hence, $d_i^R$ must act trivially. This completes the proof.
\end{proof}

\begin{remark}
\textup{
The previous proof only used the fact that $d_i^R$ acts trivially when two values in $[i-1, i+1]$ are in the first column, which is less strict than the requirement on $d_i^Q$. If we wished, we could add yet another equivalence relation to our poset by using this intermediate condition. }
\end{remark}

\begin{cor}
The involution $d^{R}_i$ preserves $\mc(\alpha)$.
\end{cor}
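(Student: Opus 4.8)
The plan is to deduce this corollary directly from the preceding results, with essentially no new computation required. The key observation is that $\mc(\alpha) \subset \SRT(\lambda(\alpha))$ is, by Corollary~\ref{bar d transitive}, a single equivalence class under the action of the involutions $d^Q_i$; and by Theorem~\ref{bar d coarse}, quasi-dual equivalence is a coarsening of $\equiv^2$, meaning that each such $d^Q_i$-class is a union of $\equiv^2$-classes. So $\mc(\alpha)$ is a union of $\equiv^2$-equivalence classes of $\SRT(\lambda(\alpha))$.

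First I would recall that $\equiv^2$ on $\SRT(\lambda)$ is generated by the involutions $d^R_i$ (acting via the reverse column reading word, as arranged in the paragraph preceding Theorem~\ref{bar d coarse}). Since $\mc(\alpha)$ is a union of $\equiv^2$-classes, and each $d^R_i$ permutes every $\equiv^2$-class within itself (being a generator of the relation), $d^R_i$ must send $\mc(\alpha)$ into $\mc(\alpha)$. That is, $d^R_i$ preserves $\mc(\alpha)$, which is exactly the claim.

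Alternatively — and perhaps more transparently for the reader — one can argue pointwise: for $T \in \mc(\alpha) \subseteq \SRT(\lambda(\alpha))$, either $d^R_i$ acts trivially on $T$, in which case $d^R_i(T) = T \in \mc(\alpha)$, or it acts nontrivially, in which case the proof of Theorem~\ref{bar d coarse} shows that $d^Q_i$ also acts nontrivially on $T$ and the two involutions agree; then $d^R_i(T) = d^Q_i(T) \in \mc(\alpha)$ by Corollary~\ref{bar d transitive}, since $\mc(\alpha)$ is closed under the $d^Q_i$. Either way $d^R_i(T) \in \mc(\alpha)$.

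I do not anticipate a genuine obstacle here: the corollary is a bookkeeping consequence of Theorem~\ref{bar d coarse} together with Corollary~\ref{bar d transitive}. The only point requiring a little care is to be explicit that $\equiv^2$ is being considered on $\SRT(\lambda)$ via the reverse column reading word (so that "$d^R_i$ acting on $\mc(\alpha)$" is the same action referred to in Theorem~\ref{bar d coarse}), and to note that "coarsening" precisely means each $d^Q_i$-class decomposes into $\equiv^2$-classes, so that a $d^R_i$-move starting inside $\mc(\alpha)$ cannot exit the $d^Q_i$-class $\mc(\alpha)$.
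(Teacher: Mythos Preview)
Your proposal is correct and follows essentially the same approach as the paper: the paper's proof is the single sentence ``The corollary is a consequence of Theorem~\ref{bar d coarse},'' and you have simply unpacked that consequence (using Corollary~\ref{bar d transitive} to identify $\mc(\alpha)$ as a quasi-dual equivalence class, then applying the coarsening statement). Your alternative pointwise argument is also fine and amounts to restating the key observation from the proof of Theorem~\ref{bar d coarse}.
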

\begin{proof}
The corollary is a consequence of Theorem~\ref{bar d coarse}.
\end{proof}

As a side note, the set of $d^{R}_i$ do not always act transitively on $\mc(\alpha)$, as can be seen by inspecting $\alpha=(2,2,2)$.

\begin{cor}
For each $k=0,1,2$, the set of functions $\{f^{(k)}\}$ of the form $f=\sum_{T\in C} F_{\ID(T)}$ forms a spanning set of the quasisymmetric functions.
\end{cor}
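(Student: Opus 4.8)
The plan is to show that the quasisymmetric Schur functions $\{\qs_\alpha\}$ already lie in the span of $\{f^{(k)}\}$, and then invoke the fact that $\{\qs_\alpha\}$ is a basis of the quasisymmetric functions. By Theorem~\ref{bar d coarse}, each quasi-dual equivalence class on $\SRT(\lambda)$ is a disjoint union of $\equiv^k$ classes; hence, reading the bent/column reading words through $\rho$ and using that $\rho$ preserves inverse descent sets, each $\qs_\alpha = \sum_{T\in\mc(\alpha)} F_{\ID(T)}$ decomposes as a nonnegative integer sum of functions in $\{f^{(k)}\}$. Therefore $\{f^{(k)}\}$ spans at least the span of $\{\qs_\alpha\}$, which is all of the quasisymmetric functions.

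Concretely, the steps I would carry out are: (i) recall that $\{\qs_\alpha \colon \alpha\vDash n\}$ is a basis for the degree-$n$ quasisymmetric functions; (ii) fix $\alpha$ and write $\qs_\alpha = \sum_{T\in\mc(\alpha)} F_{\ID(T)}$, where $\mc(\alpha)\subset\SRT(\lambda(\alpha))$ is a single quasi-dual equivalence class by Corollary~\ref{bar d transitive}; (iii) apply Theorem~\ref{bar d coarse}, which says quasi-dual equivalence is a coarsening of $\equiv^k$, so $\mc(\alpha)$ partitions into $\equiv^k$ classes $\mc_1,\dots,\mc_r$; (iv) conclude $\qs_\alpha = \sum_{t=1}^{r}\bigl(\sum_{T\in\mc_t} F_{\ID(T)}\bigr)$, a sum of $r$ elements of $\{f^{(k)}\}$; (v) since every $\qs_\alpha$ lies in the additive span of $\{f^{(k)}\}$ and the $\qs_\alpha$ span the quasisymmetric functions, so does $\{f^{(k)}\}$.

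There is essentially no obstacle here: all the real work was already done in establishing Theorem~\ref{bar d coarse} and the transitivity of the $d^Q_i$-action (Corollary~\ref{bar d transitive}). The only point requiring a word of care is the bookkeeping of reading words — one must note that the $\equiv^k$ relations on $\SRT(\lambda)$ are defined via the reverse column reading word $\cw$, and that the identity $s_\lambda = \sum_{T\in\SRT(\lambda)} F_{\ID(\cw(T))}$ together with $\rho$ preserving inverse descent sets makes the comparison between $\{\qs_\alpha\}$ and $\{f^{(k)}\}$ legitimate. Given the preceding results, the corollary is immediate.
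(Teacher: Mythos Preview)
Your argument is correct and is essentially the same as the paper's own proof: both invoke Theorem~\ref{bar d coarse} to write each $\qs_\alpha$ as a nonnegative sum of functions in $\{f^{(k)}\}$, then use that $\{\qs_\alpha\}$ is a basis of the quasisymmetric functions. You supply more detail (the partition into $\equiv^k$ classes and the reading-word bookkeeping), but the underlying idea is identical.
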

\begin{proof}
By Theorem~\ref{bar d coarse}, all quasisymmetric Schur functions must be positive sums of functions in $\{f^{(k)}\}$. The quasisymmetric Schur functions form a basis for the quasisymmetric functions, and so each $\{f^{(k)}\}$ must form a spanning set.
\end{proof}

We also have a new proof of the following, which is stated in \citep[Lemma~2.21]{bessenrodt2014symmetric} as a corollary to the decomposition of Schur functions into quasisymmetric Schur functions in \citep{haglund2011quasisymmetric}.

\begin{cor}
Any symmetric function that is a positive sum of quasisymmetric Schur functions is Schur positive.
\end{cor}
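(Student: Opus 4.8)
The plan is to chain together the results already established in this section with the earlier structural theorem. Suppose $g$ is a symmetric function that is a nonnegative sum of quasisymmetric Schur functions, say $g = \sum_{\alpha \vDash n} a_\alpha \qs_\alpha$ with each $a_\alpha \in \zz_{\geq 0}$. By Theorem~\ref{bar d coarse} (more precisely, by the fact that quasi-dual equivalence is a coarsening of $\equiv^0$), each $\qs_\alpha$ is a nonnegative integer sum of functions in $\{f^{(0)}\}$; equivalently, $\qs_\alpha = \sum_{\mc} F_{\ID(\cdot)}$ summed over the $\equiv^0$-classes contained in $\mc(\alpha) \subset \SRT(\lambda(\alpha))$, where we use the reverse column reading word. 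Concatenating, $g$ is itself a nonnegative integer sum of functions in $\{f^{(0)}\}$: there is a multiset $\mc$ of standard reverse Young tableaux (a disjoint union of $\equiv^0$-classes) with $g = \sum_{T \in \mc} F_{\ID(\cw(T))}$.

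The next step is to transport this to the setting of Theorem~\ref{symmetric to Schur}. That theorem is phrased for $\SYT(\lambda)$ with the ordinary row reading word and inverse descent set, whereas $\mc(\alpha)$ lives in $\SRT(\lambda)$ with the reverse column word; I would pass between the two via the standard ``reverse-complement'' symmetry $T \mapsto T^*$ on standard fillings (replacing each entry $m$ by $n+1-m$ and reflecting), which carries $\SRT(\lambda)$ to $\SYT(\lambda)$, sends $\cw$ to $\rw$ up to reversal, and acts on descent sets by $I \mapsto \{n-i : i \in I\}$. Since $\equiv^0$ on $\SRT(\lambda)$ is defined precisely via this reading word, the image of each $\equiv^0$-class is an $\equiv^0$-class in $\SYT(\lambda)$, and the corresponding $F$'s transform compatibly. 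Because $g$ is symmetric, both $g$ and its image under the induced map on symmetric functions (the automorphism $\omega$, or simply the identity after careful bookkeeping — the point is that symmetry is preserved) remain symmetric, so we are in position to apply Theorem~\ref{symmetric to Schur} with $k=0$.

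Theorem~\ref{symmetric to Schur} then yields $g = \sum_{\lambda \vdash n} c_\lambda s_\lambda$ where $c_\lambda$ is the multiplicity of the superstandard tableau $U_\lambda$ in the relevant disjoint union of $\equiv^0$-classes — in particular $c_\lambda \geq 0$ and, being a multiplicity, $c_\lambda \in \zz_{\geq 0}$. Hence $g$ is Schur positive. An even more streamlined route, avoiding the reverse-word bookkeeping, is to observe that since $g$ is symmetric we may invoke Lemma~\ref{comp Schur fund} and Lemma~\ref{beta to lambda} directly: writing $g = \sum_{T \in \mc} F_{\ID(T)}$ over a union of $\equiv^0$-classes (in whichever reading-word convention, as long as it is the one $\equiv^0$ is built from), we get $g = \sum_{T \in \mc} s_{\beta(T)} = \sum_{U_\lambda \in \mc} s_\lambda$, a manifestly Schur-positive expression.

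The only genuine obstacle is the bookkeeping in the second paragraph: making sure that the reading-word conventions on $\SRT(\lambda)$ used in Section~\ref{quasis} match (up to the explicit reverse-complement involution) the conventions on $\SYT(\lambda)$ under which $\equiv^0$ and Theorem~\ref{symmetric to Schur} were stated, so that the equivalence classes and the fundamental quasisymmetric functions correspond correctly. Everything else is an immediate concatenation of Theorem~\ref{bar d coarse} with Theorem~\ref{symmetric to Schur} (or with Lemmas~\ref{comp Schur fund} and~\ref{beta to lambda}), together with the elementary fact that a nonnegative combination of nonnegative-integer combinations is again a nonnegative-integer combination, and that the resulting Schur coefficients, being class multiplicities, are nonnegative integers.
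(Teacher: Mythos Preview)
Your proposal is correct and takes the same approach as the paper's one-line proof, which simply invokes Theorem~\ref{symmetric to Schur} and Theorem~\ref{bar d coarse}. The reading-word bookkeeping you flag as an obstacle is unnecessary: the set $\{f^{(k)}\}$ is the same whether computed from $\SYT$-classes or $\SRT$-classes (both being indexed by $\equiv^k$-classes of permutations via their reading words), so once Theorem~\ref{bar d coarse} writes each $\qs_\alpha$ as a nonnegative sum of functions in $\{f^{(k)}\}$, Theorem~\ref{symmetric to Schur} applies without any reverse-complement translation.
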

\begin{proof}
This follows from Theorem~\ref{symmetric to Schur} and Theorem~\ref{bar d coarse}.
\end{proof}


%
%
%
%
%
%
%

\section{Extending to shifted dual equivalence}\label{shifted}

\subsection{preliminaries}
A partition $\lambda$ is termed a  \emph{strict partition} if $\lambda =
(\lambda_{1}>\lambda_{2}>\ldots >\lambda_{k}>0)$, the \emph{shifted
  shape} $\lambda$ is the set of squares in positions $\{(i,j)$ given $
1 \leq i\leq k, i\leq j\leq \lambda_{i}+i-1 \}$.  A \emph{standard shifted tableau} $T$ is a standard filling of a shifted shape that is increasing up columns and across rows from left to right. For example, see Figure~\ref{fig: tableaux}.  Let SST$(\lambda)$ be the set of
standard shifted tableaux of shifted shape $\lambda$.

\begin{figure}[h]
 \ytableausetup{aligntableaux=bottom}
\[ 
T=
\begin{ytableau}
\none&\none&7\\
\none&5&6\\
1&2&3&4&8
\end{ytableau},   \hspace{.3in}
h_3(T)=
\begin{ytableau}
\none&\none&7\\
\none&4&6\\
1&2&3&5&8
\end{ytableau}, \hspace{.3in}
 h_5(T)=
\begin{ytableau}
\none&\none&8\\
\none&5&6\\
1&2&3&4&7
\end{ytableau}.
 \]
\caption{Three shifted dual equivalent standard shifted tableau of shape $\lambda=(5,2,1)$.
\label{fig: tableaux}
}
\end{figure}

For a treatment on shifted Knuth equivalence, shifted dual equivalence, and shifted jeu de taquin, see \citep{sagan1987shifted} and \citep{Haiman}. For our purposes, it suffices to define a shifted analog of dual equivalence.

\begin{dfn}[\citep{Haiman}]
Given a permutation $\pi\in S_n$, define the \emph{elementary
shifted dual equivalence} $h_i$ for all $1\leq i\leq n-3$ as follows.
If $n\leq 3$, then $h_1(\pi)=\pi $.
If $n=4$, then $h_1(\pi)$ acts by swapping $x$ and $y$ in the cases below,
\begin{equation}\label{shifted dual}
1x2y \quad x12y \quad 1x4y \quad x14y \quad 4x1y \quad x41y \quad 4x3y \quad x43y,
\end{equation}
and $h_1(\pi) = \pi$ otherwise. 
If $n>4$, then $h_i$ is the involution that fixes values not in $I= [i, i+3]$ and permutes the values in $I$ via $\fl(h_i(\pi)|_I)= h_1(\fl(\pi|_I))$.
\end{dfn}
\noindent
As an example, $h_1(24531)=14532$, $h_2(25134)=24135$, and $h_3(314526)=314526$. Notice that every nontrivial action of $h_i$ is equal to either $d_{i+1}$ or $d_{i+2}$. Two permutations are \emph{shifted dual equivalent} if they are connected by some sequence of elementary shifted dual equivalences.

Given a standard shifted tableau $T$, we define $h_i(T)$ as the result
of letting $h_i$ act on the row reading word of $T$.  Observe that $h_i(T)$ is
also a standard shifted tableau. We can define an
equivalence relation on standard shifted tableaux by saying $T$ and
$h_{i}(T)$ are \emph{shifted dual equivalent} for all $i$. See Figure~\ref{fig: tableaux} for an example.

\begin{thm}[Prop.~2.4, \citep{Haiman}]\label{thm: Haiman}
For all strict partitions $\lambda$, the set of $h_i$ act transitively on $\SSYT(\lambda)$.
\end{thm}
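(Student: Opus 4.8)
The plan is to argue by induction on $n=|\lambda|$, imitating the proof of the straight-shape statement (Theorem~\ref{Haiman}) while compensating for the fact that the generators $h_i$ are more restrictive than the $d_j$. The cases $n\le 4$ are finite and can be checked directly against the patterns in (\ref{shifted dual}); for $n\le 3$ the action is trivial and each shifted shape carries a single tableau. For the inductive step the only substantive point is the following claim: if $c^{*}$ denotes the removable corner of $\lambda$ that is the rightmost cell of the bottom row, then every $T\in\SSYT(\lambda)$ is shifted dual equivalent to a tableau in which $n$ occupies $c^{*}$. Granting this, transitivity follows quickly: given $S,T\in\SSYT(\lambda)$, replace each by a shifted-dual-equivalent tableau with $n$ in $c^{*}$; since $c^{*}$ is the last cell of the row reading word, erasing $n$ yields two standard shifted tableaux of shape $\lambda\setminus c^{*}$, which by the inductive hypothesis are joined by a sequence of $h_i$ with $i\le n-4$. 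Each such $h_i$ permutes only values in $[i,i+3]\subseteq[n-1]$, hence fixes the cell of $n$, so the same sequence lifts to the full tableaux, and prepending the moves that placed $n$ in $c^{*}$ closes the chain.

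To prove the claim I would use the observation recorded just before the statement that every nontrivial $h_i$ equals $d_{i+1}$ or $d_{i+2}$ on the row reading word. Thus the large entries $n,n-1,n-2,\dots$ can be slid one step at a time along the boundary of the shifted shape by the very moves used in the straight-shape argument, provided that at each step the local pattern $\fl(\rw(T)|_{[i,i+3]})$ of four consecutive values is one of the eight patterns in (\ref{shifted dual}). I would arrange this by first repositioning $n-1$ (and, when needed, $n-2$) relative to $n$ using the inductive hypothesis applied to the sub-tableau on $[n-1]$, so that the required pattern is present, and only then performing the slide of $n$ toward a removable corner in a strictly lower row; iterating brings $n$ to $c^{*}$. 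Equivalently, one can prove directly that a single ``boundary step'' of $n$ between two consecutive removable corners is realizable by the $h_i$ and chain such steps.

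The step I expect to be the main obstacle is the case analysis near the main diagonal of the shifted shape. The staircase left edge makes the configurations of $\{n-3,n-2,n-1,n\}$ arising when $n$ sits next to the diagonal differ from the straight-shape situation, and one must check that the patterns peculiar to the shifted setting --- especially the $1x2y$ and $x12y$ families, which have no counterpart among the straight patterns $x1y,\ x3y$ --- are exactly what is needed to push $n$ past a diagonal obstruction. A second, more routine, nuisance is the constraint $i\le n-3$ in the definition of $h_i$: a move one would like to perform only on $\{n-1,n\}$ or on $\{n-2,n-1,n\}$ must be carried out inside some $h_{n-3}$ acting on $\{n-3,n-2,n-1,n\}$, so one must ensure $n-3$ is positioned compatibly, again via the inductive hypothesis on $[n-1]$. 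Once these points are settled the induction closes.
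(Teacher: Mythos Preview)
The paper does not prove this theorem; it is quoted from \citep{Haiman} and used as a black box, so there is no proof in the paper to compare your proposal against.

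That said, your outline is the standard one and is essentially how Haiman's argument goes. It is also the template the paper itself uses for the closely related Proposition~\ref{ssyt are equiv2}: induct on $n$, use the inductive hypothesis to pre-position $n-1$, $n-2$, $n-3$ so that the local four-letter pattern matches one of the cases in (\ref{shifted dual}), and then slide $n$ between adjacent removable corners. The paper's version of this argument anchors $n$ in the \emph{top} corner and pushes it downward rather than toward the bottom-row corner $c^{*}$ as you propose, but either choice works for the same reason. The obstacles you flag---the diagonal configurations and the index constraint $i\le n-3$---are exactly the ones that force the extra patterns $1x2y$, $x12y$, $4x3y$, $x43y$ to appear in (\ref{shifted dual}), and your plan of using the inductive hypothesis on $[n-1]$ to arrange those patterns is precisely what is done.
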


As with $\{f^{(k)}\}$, we wish to consider the generating functions of shifted dual equivalence.

\begin{dfn}\label{fh def}
Let $\{f^{(h)}\}$ be the set of functions that can be realized as $\sum_{T \in \mc } F_{\ID(T)}$, where $\mc$ is a single shifted dual equivalence class of permutations.
\end{dfn}


\noindent
To study $\{f^{(h)}\}$, we will need some intermediary involutions. For an inverse descent set $D\subset [n-1]$, let $D^C$ be the compliment of $D$ in $[n-1]$. The involution on quasisymmetric functions that sends $F_{D}$ to $F_{D^C}$ and $s_\lambda$ to $s_\lambda'$ is denoted $\omega$. Given $\pi\in S_n$, let the \emph{reverse} of $\pi$, $\rev(\pi)$, be the result of reading $\pi$ from right to left. Let the \emph{flip} of $\pi$, $\flip(\pi)$, be the result of taking $\rev(\pi)$ and inverting all values, sending $i$ to $n+1-i$. For example, $\rev(21435)=53412$ and $\flip(21435)=13254$. Notice that $\ID(\rev(\pi))= \ID(\pi)^C$, and $\ID(\flip(\pi))$ is the result of sending each $i\in \ID(\pi)$ to $n-i$. 
Thus, if $f=\sum_\pi F_{\ID(\pi)}$, then
\begin{equation}\label{compliment conjugate}
\sum_\pi F_{\ID(\rev(\pi))}=\sum_\pi F_{\ID(\pi)^C}=\omega(f).
\end{equation}
Further, if $s_\lambda=\sum_{\pi} F_{\ID(\pi)}$, then
 \begin{equation}\label{Schur comp}
 s_{\lambda'}=\sum_{\pi} F_{\ID(\rev(\pi))} \quad \textup{and} \quad s_\lambda =\sum_{\pi} F_{\ID(\flip(\pi))}.
 \end{equation}
  Though (\ref{Schur comp}) is well known, we can sketch a proof as follows. By considering the equation for $s_\lambda$ in (\ref{Schur def}), we may send each $T\in \SYT(\lambda)$ to its conjugate, sending the values in the $i^{th}$ row $\lambda$ to the $i^{th}$ column of $\lambda'$. The resulting filling in $\SYT(\lambda')$ will have the same inverse descent set as $\rev(\rw(T))$. If we instead invert every value in $T$, we may appeal to (\ref{reverse Schur def}) by sending each value $i$ to $n-i$. We can then use the reverse column reading word to give a filling in $\SRT(\lambda)$ with the same inverse descent set as $\flip(\rw(T))$.

\subsection{From restricted dual equivalence to shifted dual equivalence}

\begin{prop}\label{restricted are shifted}
Given distinct $\pi,\pi' \in S_n$, if $d_i^R(\rev(\pi))=\rev(\pi')$, then $h_{i-1}(\pi)=\pi'$. If $d_i^R(\flip(\pi))=\flip(\pi')$, then $h_{n-i}(\pi)=\pi'$.
\end{prop}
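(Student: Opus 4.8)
The plan is to verify the statement by a direct comparison of the combinatorial descriptions of $d_i^R$ and $h_{i-1}$ (resp. $h_{n-i}$) acting on flattened windows, exploiting that both operations are ``local'': $d_i^R$ depends only on $\fl(\pi|_{[i-1,i+2]})$ via \eqref{flat restricted dual}, while $h_j$ depends only on $\fl(\pi|_{[j,j+3]})$ via \eqref{shifted dual}. First I would observe that $\rev$ conjugates $\ID$ to its complement (as noted before \eqref{compliment conjugate}), so it interchanges the conditions defining the nontrivial cases: the five patterns $x1y4,\ x14y,\ x41y,\ x3y4,\ x34y$ listed in \eqref{flat restricted dual} for $d_i^R$ should, after reversing the window and relabeling indices by $i \mapsto i-1$, match exactly the eight patterns $1x2y,\ x12y,\ 1x4y,\ x14y,\ 4x1y,\ x41y,\ 4x3y,\ x43y$ listed in \eqref{shifted dual} for $h_1$ (equivalently $h_{i-1}$ on the shifted window $[i-1,i+2]$). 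The slight mismatch in count of listed patterns is because the $h$-list records the action on a length-$4$ window in both orders of the ``extreme'' letters, whereas the $d^R$-list is written only one way; so the real content is that the \emph{set} of nontrivial windows agrees and the swap performed (exchanging $x$ and $y$, i.e. the two letters flanking the ``middle'' of the window) is the same after applying $\rev$.

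Concretely, the key steps in order: (1) Reduce to the local statement. Since $d_i^R$ and $h_{i-1}$ each fix all values outside a length-$4$ interval of values, and since $\rev$ commutes with the ``restrict to an interval of values and flatten'' operation in the sense that $\fl(\rev(\pi)|_I) = \rev(\fl(\pi|_I))$, it suffices to check that for every $\sigma \in S_4$, $h_1(\sigma) = \rev\bigl(d_2^R(\rev(\sigma))\bigr)$ — here I shift indices so that the window $[i-1,i+2]$ becomes $[1,4]$ and $d_i^R$ becomes $d_2^R$, $h_{i-1}$ becomes $h_1$. (2) Enumerate. Walk through the $24$ permutations in $S_4$ (or better, through the $8$ nontrivial $h_1$-patterns and separately confirm $h_1$ is trivial on the other $16$), applying $\rev$, then $d_2^R$ as dictated by \eqref{flat restricted dual}, then $\rev$ again, and check the output equals $h_1(\sigma)$. (3) For the second assertion, repeat with $\flip$ in place of $\rev$: since $\ID(\flip(\pi))$ sends $i \in \ID(\pi)$ to $n-i$, conjugating $d_i^R$ by $\flip$ produces an elementary operation localized at the value-window $[n-i-1,n-i+2]$, which is precisely the window of $h_{n-i}$; the pattern-matching is the same computation read ``upside down,'' so once the $\rev$ case is done the $\flip$ case follows by noting $\flip = \rev \circ (\text{value-reversal})$ and that value-reversal on a $4$-window exactly swaps the two flavors of $d^R$/$h$ patterns (those with a $1$ versus those with a $4$ in the extreme slot), which is already visible in the symmetry of the lists \eqref{flat restricted dual} and \eqref{shifted dual}.

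I expect the main obstacle to be purely bookkeeping rather than conceptual: getting the index shift exactly right (the claim pairs $d_i^R$ with $h_{i-1}$, not $h_i$, because $h_{i-1}$ operates on values $[i-1,i+2]$, the same four values $d_i^R$ touches), and making sure the ``complement of descent set'' condition really does convert the defining trivial/nontrivial dichotomy of $d_i^R$ into that of $h_{i-1}$ — in particular that the extra trivial case of $d_i^R$ (when $i+1 \in \ID(\pi)\cap\ID(d_i(\pi))$) corresponds under $\rev$ to a window on which $h_{i-1}$ is already defined to be trivial. A clean way to organize this is to note that $d_i^R$ acts nontrivially exactly on the five windows in \eqref{flat restricted dual}, to reverse each of these five windows explicitly, and to check the five reversed windows are among the eight in \eqref{shifted dual} with matching swaps; then separately check that the three ``missing'' $h_1$-windows, when reversed, land on windows where $d_i$ is trivial or where $d_i^R$ is forced trivial by the descent-intersection condition. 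If that accounting closes, both implications follow immediately, and distinctness of $\pi,\pi'$ guarantees we are genuinely in a nontrivial case so no degenerate identification occurs.
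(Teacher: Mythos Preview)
Your overall strategy---localize to the four-value window and match the pattern lists \eqref{flat restricted dual} and \eqref{shifted dual}---is exactly what the paper does. The paper simply reverses each of the five $d_i^R$ patterns, observes that the resulting five patterns sit inside the eight $h_1$ patterns with the same $x\leftrightarrow y$ swap, and is done; then repeats with $\flip$.

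There is, however, a directional slip in your write-up that would cause your step~(1)--(2) to fail as stated. You propose to verify
\[
h_1(\sigma)=\rev\bigl(d_2^R(\rev(\sigma))\bigr)\qquad\text{for every }\sigma\in S_4,
\]
i.e.\ that $h_1$ and $\rev\circ d_2^R\circ\rev$ are \emph{equal} as maps on $S_4$. They are not: $h_1$ has eight nontrivial patterns while $d_2^R$ (hence its $\rev$-conjugate) has only five, so on the three ``missing'' patterns $1x2y$, $x12y$, $1x4y$ the map $h_1$ moves but $\rev\circ d_2^R\circ\rev$ does not (e.g.\ $h_1(1324)=1423$, yet $d_2^R(4231)=4231$). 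Proposition~\ref{restricted are shifted} is a one-way implication: the hypothesis $\pi\neq\pi'$ forces $d_i^R$ to act nontrivially, so you only need that the five reversed $d_i^R$ patterns appear among the $h_1$ patterns with matching swap. Your ``clean way'' paragraph does precisely this and is correct; the additional check on the three missing $h_1$-windows is unnecessary (it would establish the converse, which the proposition does not claim and which is false at the level of generators).

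One small index issue in the $\flip$ half: you correctly compute that $d_i^R$ applied to $\flip(\pi)$ touches the values $[n-i-1,\,n-i+2]$ of $\pi$, but then identify this as the window of $h_{n-i}$. Since $h_j$ acts on $[j,j+3]$, that window belongs to $h_{n-i-1}$, not $h_{n-i}$; the paper's own proof also arrives at $h_{n-i-1}$, so the discrepancy is with the index in the statement rather than with your computation.
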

\begin{proof}
Taking the reverse of each nontrivial action of $d_i^R$, as expressed in (\ref{flat restricted dual}), gives
\begin{equation}
x14y \quad 
x41y \quad 
4x1y \quad
4x3y \quad
x43y,
\end{equation}
each of which is present in (\ref{shifted dual}). Since $h_i$ acts on $[i, i+3]$ and $d_i^R$ acts on $[i-1, i+2]$, the action of $d_i^R$ is achieved by $h_{i-1}$, completing the first part of the proposition.

Taking the flip of each nontrivial action of $d_i^R$, as expressed in (\ref{flat restricted dual}), gives
\begin{equation}\label{flip word}
x41y \quad 
x14y \quad 
1x4y \quad
1x2y \quad
x12y,
\end{equation}
each of which is present in (\ref{shifted dual}). Taking the flip of the word sends $i$ to $n+1-i$. Again, $h_i$ acts on $[i, i+3]$ while $d_i^R$ acts via the flip of $\pi$ on $[n-i-1, n-i+2]$, so $d_{i}^R$ is achieved by $h_{n-i-1}$. This completes the proof of the second part of the proposition.
\end{proof}


 \begin{thm}\label{fh positive}
 For $k=0,1,2$, the relation $\equiv^k$ on $S_n$ is a refinement of shifted dual equivalence on the reverse of the permutations in $S_n$. In particular, every function in $\{f^{(h)}\}$ is a nonnegative sum of functions in $\{\omega(f)\colon f\in \{f^k\} \}$, for each $k=0,1,2$. Further, the relation $\equiv^k$ on $S_n$ is a refinement of the relation generated by shifted dual equivalence on the flip of the permutations in $S_n$.
 \end{thm}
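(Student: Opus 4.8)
The plan is to deduce Theorem~\ref{fh positive} almost entirely from Proposition~\ref{restricted are shifted}, reducing the statement about equivalence relations to a statement about generators. First I would recall that, by Definition~\ref{restricted d} together with Definition~\ref{def of equiv} and Theorem~\ref{restricted positive}, each $\equiv^k$ for $k=0,1,2$ is refined by $\equiv^2$ is... wait, more precisely: $\equiv^0$ refines $\equiv^1$ refines $\equiv^2$, and $\equiv^2$ is generated by the involutions $d_i^R$. So it suffices to prove the claim for $\equiv^2$: if $\pi \equiv^2 \pi'$, then $\rev(\pi)$ and $\rev(\pi')$ are shifted dual equivalent, and $\flip(\pi)$ and $\flip(\pi')$ are shifted dual equivalent.

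The key step is then purely mechanical: $\pi \equiv^2 \pi'$ means $\pi'$ is obtained from $\pi$ by a finite sequence of moves $\pi = \pi_0, \pi_1, \ldots, \pi_m = \pi'$ with $\pi_{\ell+1} = d_{i_\ell}^R(\pi_\ell)$ for some indices $i_\ell$. I would apply $\rev$ throughout: since $\rev$ is a bijection on $S_n$, each step $d_{i_\ell}^R(\pi_\ell) = \pi_{\ell+1}$ can be rewritten as $d_{i_\ell}^R(\rev(\rev(\pi_\ell))) = \rev(\rev(\pi_{\ell+1}))$, and then Proposition~\ref{restricted are shifted} (first part), applied with $\pi = \rev(\pi_{\ell+1})$... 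I need to be careful about the direction here. Proposition~\ref{restricted are shifted} says: if $d_i^R(\rev(\sigma)) = \rev(\sigma')$ then $h_{i-1}(\sigma) = \sigma'$. So I set $\rev(\sigma) = \pi_\ell$ and $\rev(\sigma') = \pi_{\ell+1}$, i.e. $\sigma = \rev(\pi_\ell)$ and $\sigma' = \rev(\pi_{\ell+1})$ (using $\rev \circ \rev = \mathrm{id}$); since $d_{i_\ell}^R(\pi_\ell) = \pi_{\ell+1}$, the hypothesis holds and we get $h_{i_\ell - 1}(\rev(\pi_\ell)) = \rev(\pi_{\ell+1})$. Chaining these gives a sequence of elementary shifted dual equivalences from $\rev(\pi)$ to $\rev(\pi')$, so $\rev(\pi)$ and $\rev(\pi')$ are shifted dual equivalent; this is exactly the statement that $\equiv^k$ on $S_n$ refines shifted dual equivalence on reverses. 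One subtlety to check is that when $d_{i_\ell}^R(\pi_\ell) = \pi_\ell$ (a trivial step), there is nothing to do, so we may assume $\pi_\ell \neq \pi_{\ell+1}$ exactly when Proposition~\ref{restricted are shifted} applies. The $\flip$ statement is identical, using the second part of Proposition~\ref{restricted are shifted} with $h_{n-i-1}$ (or $h_{n-i}$ as written there) in place of $h_{i-1}$.

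For the ``In particular'' clause about functions: if $g \in \{f^{(h)}\}$, then $g = \sum_{\pi \in \mc} F_{\ID(\pi)}$ for a single shifted dual equivalence class $\mc \subset S_n$. Because $\equiv^k$ refines shifted dual equivalence on reverses, the set $\rev(\mc) = \{\rev(\pi) : \pi \in \mc\}$ is a disjoint union of $\equiv^k$-classes, say $\mc_1, \ldots, \mc_r$. Each $g_j := \sum_{\tau \in \mc_j} F_{\ID(\tau)}$ lies in $\{f^{(k)}\}$ by Definition~\ref{fk def}. By (\ref{compliment conjugate}), $\sum_{\pi \in \mc} F_{\ID(\rev(\pi))} = \omega(g)$, i.e. $\omega(g) = \sum_{\tau \in \rev(\mc)} F_{\ID(\tau)} = \sum_{j=1}^r g_j$, hence $g = \sum_{j=1}^r \omega(g_j)$ since $\omega$ is an involution. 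This exhibits $g$ as a nonnegative (indeed $\{0,1\}$-coefficient) sum of elements of $\{\omega(f) : f \in \{f^{(k)}\}\}$, as claimed.

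I do not expect a genuine obstacle: the real content is Proposition~\ref{restricted are shifted}, already proven. The only things requiring a modicum of care are bookkeeping the index shift ($i \leftrightarrow i-1$ for reverse, $i \leftrightarrow n-i-1$ for flip) and handling trivial steps of the generating sequence — a trivial $d_i^R$ move need not correspond to a trivial $h$-move on the reversed word, but since it leaves the permutation fixed, we simply skip it, and a fixed-point relation is vacuously ``connected by a (length-zero) sequence'' of elementary shifted dual equivalences. I would state this explicitly to avoid any confusion. A final remark worth including is that the refinement is generally strict, consistent with the dashed line in Figure~\ref{equiv poset}, but this is not needed for the theorem.
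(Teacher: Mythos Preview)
Your proposal is correct and follows essentially the same route as the paper: reduce to $k=2$ via Theorem~\ref{restricted positive}, then invoke Proposition~\ref{restricted are shifted} on each generator step, and finally use Definitions~\ref{fk def}, \ref{fh def} together with (\ref{compliment conjugate}) for the statement about $\{f^{(h)}\}$. The paper's proof is terse where you are explicit (chaining the generator moves, handling trivial $d_i^R$ steps, and unpacking $\omega$), but the argument is the same; your observation about the index discrepancy $h_{n-i}$ versus $h_{n-i-1}$ in Proposition~\ref{restricted are shifted} is a genuine typo in the paper's statement, not in your reasoning.
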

 \begin{proof}
 By Theorem~\ref{restricted positive}, it suffices to only consider the $k=2$ case. The statements about refining equivalence relations are then direct applications of Proposition~\ref{restricted are shifted} to the definitions of $\equiv^2$ and shifted dual equivalence. The statement about $\{f^{h}\}$ follows directly from Definitions~\ref{fk def} and \ref{fh def}, along with (\ref{compliment conjugate}).
 \end{proof}
 
\begin{cor}\label{fh Schur positive}
Let $\mc$ be any union of shifted dual equivalence classes of $S_n$ such that $f=\sum_{\pi \in \mc} F_{\ID(\pi)}$ is symmetric. Then 
\begin{align*}
f&= \sum_{\lambda\vdash n} |\{\pi \in \mc \colon \rev(\pi) \in \SYam(\lambda)\}| \cdot s_{\lambda'},
\\
&= \sum_{\lambda\vdash n} |\{\pi \in \mc \colon \flip(\pi) \in \SYam(\lambda)\}| \cdot s_{\lambda}.
\end{align*}
In particular, $f$ is Schur positive.
\end{cor}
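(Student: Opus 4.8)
The plan is to deduce Corollary~\ref{fh Schur positive} directly from Theorem~\ref{fh positive} together with the machinery already assembled for ordinary restricted dual equivalence, namely Corollary~\ref{positive perms} and the identities in (\ref{compliment conjugate}) and (\ref{Schur comp}). The two displayed equalities are symmetric in spirit: one goes through $\rev$ and the conjugation operator $\omega$, the other goes through $\flip$ and requires no operator at all. I would handle them in parallel, doing the $\rev$ case in full and indicating the $\flip$ case as the mirror argument.

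First I would set $\mc' = \rev(\mc) = \{\rev(\pi) \colon \pi \in \mc\}$. Since $f = \sum_{\pi \in \mc} F_{\ID(\pi)}$ is symmetric, (\ref{compliment conjugate}) gives that $\sum_{\sigma \in \mc'} F_{\ID(\sigma)} = \omega(f)$, which is again symmetric (as $\omega$ preserves the space of symmetric functions of degree $n$). By Theorem~\ref{fh positive}, $\equiv^2$ on $S_n$ refines shifted dual equivalence on the reverses of permutations; concretely, this means $\mc'$ is a union of $\equiv^2$-equivalence classes of $S_n$. Hence Corollary~\ref{positive perms} applies to $\mc'$ and yields $\omega(f) = \sum_{\lambda \vdash n} c_\lambda s_\lambda$ where $c_\lambda = |\{\sigma \in \mc' \colon \sigma \in \SYam(\lambda)\}| = |\{\pi \in \mc \colon \rev(\pi) \in \SYam(\lambda)\}|$. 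Applying $\omega$ to both sides and using $\omega(s_\lambda) = s_{\lambda'}$ and $\omega^2 = \mathrm{id}$ gives the first displayed formula. The second is obtained the same way, replacing $\rev$ by $\flip$: by the last sentence of Theorem~\ref{fh positive}, $\flip(\mc)$ is a union of $\equiv^2$-classes, and by the second identity in (\ref{Schur comp}) the operator relating $f$ to $\sum_{\sigma \in \flip(\mc)} F_{\ID(\sigma)}$ is trivial at the level of Schur functions (it sends $s_\lambda$ to $s_\lambda$), so Corollary~\ref{positive perms} applied to $\flip(\mc)$ directly gives the coefficient count $|\{\pi \in \mc \colon \flip(\pi) \in \SYam(\lambda)\}|$. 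Schur positivity is then immediate since these coefficients are cardinalities, hence nonnegative integers.

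I do not anticipate a serious obstacle here — this corollary is a bookkeeping consequence of results already proved. The one point requiring a little care is making sure the translation between "refinement of an equivalence relation on reverses/flips of permutations" (as phrased in Theorem~\ref{fh positive}) and "$\rev(\mc)$, resp. $\flip(\mc)$, is a union of $\equiv^2$-classes" is stated cleanly, so that Corollary~\ref{positive perms} can be invoked verbatim; this is just unwinding definitions. A second minor check is that $\omega$ and $\rev$ interact as claimed: $\ID(\rev(\pi)) = \ID(\pi)^C$ is recorded in the text preceding (\ref{compliment conjugate}), and $\omega$ being an involution on quasisymmetric functions with $\omega(s_\lambda) = s_{\lambda'}$ is part of its definition there, so no new work is needed. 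For the $\flip$ case, one should note that $\flip$ is a bijection on $S_n$ and preserves the property of being symmetric (again via the description of $\ID(\flip(\pi))$ as relabeling $i \mapsto n-i$), so that Corollary~\ref{positive perms} legitimately applies to $\flip(\mc)$.
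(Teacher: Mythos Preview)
Your proposal is correct and follows essentially the same approach as the paper: apply $\rev$ (respectively $\flip$) to $\mc$, observe via Theorem~\ref{fh positive} (the paper cites the underlying Proposition~\ref{restricted are shifted} directly) that the image is a union of $\equiv^2$ classes, invoke Corollary~\ref{positive perms}, and then undo the effect of $\rev$ via $\omega$ (respectively note that $\flip$ has no effect on Schur functions). The only cosmetic difference is that you explicitly name the involution $\omega$ and verify it preserves symmetry, whereas the paper compresses this into the phrase ``reversing each permutation has the effect of conjugating the resulting Schur functions.''
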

\begin{proof} 
By Proposition~\ref{restricted are shifted}, reversing all of the permutations in $\mc$ yields some $\md$ that is the union of $\equiv^2$ classes. By Corollary~\ref{positive perms}
\begin{equation}
\sum_{\pi\in\mc} F_{\ID(\rev(\pi))} = \sum_{\lambda\vdash n} |\{\pi \in \md \colon \pi \in \SYam(\lambda)\}| \cdot s_{\lambda}.\\
\end{equation}
Reversing each permutation has the effect of conjugating the resulting Schur functions, so
\begin{equation}
\sum_{\pi\in\mc} F_{\ID(\pi)} = \sum_{\lambda\vdash n} |\{\pi \in \mc \colon \rev(\pi) \in \SYam(\lambda)\}| \cdot s_{\lambda'}.
\end{equation}
Similarly, By Proposition~\ref{restricted are shifted}, flipping all of the permutations in $\mc$ yields some $\md$ that is the union of $\equiv^2$ classes. By Corollary~\ref{positive perms}
\begin{equation}
\sum_{\pi\in\mc} F_{\ID(\flip(\pi))} = \sum_{\lambda\vdash n} |\{\pi \in \md \colon \pi \in \SYam(\lambda)\}| \cdot s_{\lambda}.\\
\end{equation}
Flipping each permutation has no  effect on the resulting the Schur functions, so
\begin{equation}
\sum_{\pi\in\mc} F_{\ID(\pi)} = \sum_{\lambda\vdash n} |\{\pi \in \mc \colon \flip(\pi) \in \SYam(\lambda)\}| \cdot s_{\lambda}.
\end{equation}
\end{proof}
 
 \begin{remark}
 \textup{Traditionally, the generating function over standard shifted tableaux uses peak sets rather than inverse descent sets. Accordingly, it is written as a sum of peak quasisymmetric functions rather than fundamental quasisymmetric functions. It is thus surprising to find a simple condition for Schur positivity with the less studied generating functions of  shifted dual equivalence classes given above.}
 \end{remark}

We end by considering when $\equiv^2$ gives an entire shifted dual equivalence class. Put a different way, our final result gives a converse to Proposition~\ref{restricted are shifted}, so long as we only consider the row reading words of shifted tableaux in $\SSYT(\lambda)$.  
 
 \begin{prop}\label{ssyt are equiv2}
For all strict partitions $\lambda$, the set of $d_i^R$ acts transitively on $\SSYT(\lambda)$ via the flip of the row reading word. 
 \end{prop}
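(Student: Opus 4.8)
The plan is to combine the transitivity of shifted dual equivalence on $\SSYT(\lambda)$ (Theorem~\ref{thm: Haiman}) with the relationship between $d_i^R$ and $h_i$ established in Proposition~\ref{restricted are shifted}. Since Theorem~\ref{thm: Haiman} tells us that the generators $h_i$ act transitively on $\SSYT(\lambda)$, and the second half of Proposition~\ref{restricted are shifted} shows that each nontrivial $h_{n-i}$-move between flips of permutations can be realized by some $d_i^R$, the bulk of the argument will be to verify a converse: that \emph{every} application of $h_j$ to the row reading word of a tableau in $\SSYT(\lambda)$ actually corresponds to a nontrivial $d_i^R$-move, so that no transitions are lost when we pass from the $h_j$-generators to the $d_i^R$-generators.

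First I would set up notation: fix a strict partition $\lambda$, let $T \in \SSYT(\lambda)$, let $w = \rw(T)$, and consider $\flip(w)$. By Proposition~\ref{restricted are shifted}, if $d_i^R(\flip(w)) = \flip(w')$ then $h_{n-i}(w) = w'$; I need the reverse implication restricted to shifted tableaux. Second, I would examine the eight word-patterns in (\ref{shifted dual}) defining $h_1$, and check which of them can occur as $\fl(\rw(T)|_I)$ for $I$ a window of four consecutive values in the row reading word of an actual standard shifted tableau. The key observation should be that the patterns $1x2y$, $1x4y$, $4x1y$, $4x3y$ and their swapped partners are precisely the ones compatible with shifted tableaux — the remaining patterns $x12y$, $x14y$, $x41y$, $x43y$ from (\ref{shifted dual}) cannot arise, or when they do the corresponding move is already covered. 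Crucially, these are exactly the flips of the five nontrivial $d_i^R$-patterns listed in (\ref{flip word}): $x41y$, $x14y$, $1x4y$, $1x2y$, $x12y$. So I would argue that for a shifted tableau, a nontrivial $h_j$-move on the reading word, after flipping, always lands among the patterns where $d_i^R$ acts nontrivially — i.e. $d_i^R$ never degenerates to the identity in this situation. Once that is in hand, the set of $d_i^R$ (acting via the flip of the row reading word) generates at least as coarse an equivalence relation on $\SSYT(\lambda)$ as the $h_j$ do, hence by Theorem~\ref{thm: Haiman} acts transitively.

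The main obstacle I anticipate is the case analysis verifying that no $h_j$-transition between genuine shifted tableaux is ``lost'': one must rule out that $h_j$ acts nontrivially on $\rw(T)$ while the corresponding $d_i^R$ acts trivially. This requires understanding, for a standard shifted tableau $T$, how values $j, j+1, j+2, j+3$ can be positioned — in particular using the shifted-shape constraint (the staircase offset) to show that configurations forcing $d_i^R$ to be the identity (roughly, two of the relevant values stacked in the first column, analogous to the obstruction in Theorem~\ref{bar d coarse}) cannot coexist with $h_j$ acting nontrivially. A clean way to organize this: note that every nontrivial $h_i$ equals $d_{i+1}$ or $d_{i+2}$ (as remarked after the definition of $h_i$), translate through the flip, and then invoke exactly the computation already done in (\ref{flip word}) — the five flipped patterns are literally the five nontrivial $d_i^R$ patterns, so the identification is forced. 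I would then close by stating that transitivity of $\{d_i^R\}$ on $\SSYT(\lambda)$ follows immediately, and remark (as the surrounding text promises) that this exhibits each $\SSYT(\lambda)$-class as a single $\equiv^2$-class after the flip.
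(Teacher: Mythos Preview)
Your plan diverges from the paper's argument, and the route you sketch has a real gap.

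The paper does \emph{not} try to show that every nontrivial $h_j$-move on a shifted tableau is realized by some $d_i^R$-move through the flip. Instead it proves transitivity directly by induction on $n$: assuming the result for strict partitions of size $n-1$, it shows that the value $n$ can be moved from the top northeast corner to any other northeast corner using only moves from the list in (\ref{flip word}), by first placing $n-1$ in the target corner and $n-3,n-2$ just before it in reading order so that one of the patterns $1x2y$ or $x12y$ applies. The inductive hypothesis then rearranges the remaining values.

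Your approach would require the converse of Proposition~\ref{restricted are shifted} at the level of individual moves, and this fails. Only five of the eight patterns in (\ref{shifted dual}) appear in (\ref{flip word}); the remaining three, namely $4x1y$, $4x3y$, $x43y$, are genuine $h$-moves that are \emph{not} $d_i^R$-moves after flipping. And they do occur on shifted tableaux: for $T\in\SSYT((4,2,1))$ with $\rw(T)=7561234$, the window $[4,7]$ flattens to $4231$, which is the pattern $4x3y$. Tracing through the flip, the corresponding $d_2^R$ on $\flip(\rw(T))=4567231$ acts trivially (one checks $3\in\ID(4231)\cap\ID(4132)$), so this particular $h_4$-edge is not a $d_i^R$-edge. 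In this example the move happens to coincide with an $h_2$-move that \emph{is} covered by (\ref{flip word}), but you have not argued, and it is not obvious, that such redundancy always occurs. Your sentence ``the five flipped patterns are literally the five nontrivial $d_i^R$ patterns, so the identification is forced'' gets the logic backwards: (\ref{flip word}) shows $d_i^R$-moves form a subset of $h$-moves, not the other way around.

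To repair your argument you would need a lemma saying that whenever one of the three exceptional patterns arises in $\rw(T)$ for $T\in\SSYT(\lambda)$, the resulting tableau is reachable from $T$ by a sequence of moves from (\ref{flip word}). That is a separate (and not obviously easier) transitivity statement. The paper's inductive corner-moving argument sidesteps this entirely.
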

\begin{proof}
Since $d_i^R$ acts via the flip of a row reading word, it acts via the actions in (\ref{flip word}). By Theorem~\ref{thm: Haiman} and Proposition~\ref{restricted are shifted}, this action is an involution on  $\SSYT(\lambda)$. To prove transitivity, we proceed by induction on $n$.

If $n=1$, then the result is trivial. Assume $n\geq 2$ and that the result holds for strict partitions of size less than $n$. Consider any $T\in \SSYT(\lambda)$ such that $n$ is in the northeast corner, $c_1$, in the highest row.  It suffices to show that we may move $n$ into each of the lower northeast corners by applying the involutions in (\ref{flip word}), since we may use the inductive hypothesis to rearrange values in $[n-1]$ as needed. If there are no other northeast corners, we are done.  Otherwise, the inductive hypothesis guarantees that $n-1$ may be placed in the desired corner, $c_2$. Then the values $n-3$ and $n-2$ may be placed in the last of the allowable cells before $c_2$, in row reading order, as in Figure~\ref{shifted swap}. One of the last two involution in (\ref{flip word}) then swaps the $n$ and $n-1$, as desired.
\end{proof}  
 %
 \begin{figure}[h]
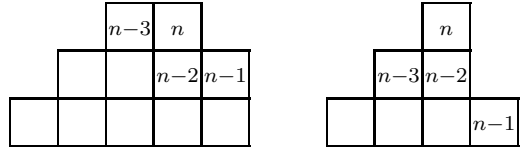

\[
\ytableausetup{smalltableaux=off}
 \begin{ytableau}
\none&\none&\scriptstyle {n-3}&\scriptstyle n\\
\none&&&\scriptstyle {n-2}&\scriptstyle  {n-1}\\
&&&&
\end{ytableau}  
\hspace{.4in}
 \begin{ytableau}
\none&\none&\scriptstyle n\\
\none&\scriptstyle n-3&\scriptstyle n-2\\
&&&\scriptstyle n-1
\end{ytableau}  
\]
 \caption{Example placements of the values in $[n-3,n]$ to move $n$ from the top row to a lower row.} \label{shifted swap}
 \end{figure} 
 %
 %
\bibliographystyle{abbrvnat}
\bibliography{BIBDatabase}

\begin{thebibliography}{17}
\providecommand{\natexlab}[1]{#1}
\providecommand{\url}[1]{\texttt{#1}}
\expandafter\ifx\csname urlstyle\endcsname\relax
  \providecommand{\doi}[1]{doi: #1}\else
  \providecommand{\doi}{doi: \begingroup \urlstyle{rm}\Url}\fi

\bibitem[Assaf(2015)]{assaf2015dual}
S.~H. Assaf.
\newblock {Dual equivalence graphs I: A new paradigm for Schur positivity}.
\newblock \emph{arXiv:1506.03798}, 2015.

\bibitem[Bessenrodt et~al.(2011)Bessenrodt, Luoto, and van
  Willigenburg]{bessenrodt2011skew}
C.~Bessenrodt, K.~Luoto, and S.~van Willigenburg.
\newblock Skew quasisymmetric {S}chur functions and noncommutative {S}chur
  functions.
\newblock \emph{Advances in Mathematics}, 226\penalty0 (5):\penalty0
  4492--4532, 2011.

\bibitem[Bessenrodt et~al.(2014)Bessenrodt, Tewari, and van
  Willigenburg]{bessenrodt2014symmetric}
C.~Bessenrodt, V.~V. Tewari, and S.~J. van Willigenburg.
\newblock Symmetric skew quasisymmetric {S}chur functions.
\newblock \emph{{arXiv preprint arXiv:1410.2934}}, 2014.

\bibitem[Billey et~al.(2014)Billey, Hamaker, Roberts, and
  Young]{Billey2014Coxeter}
S.~Billey, Z.~Hamaker, A.~Roberts, and B.~Young.
\newblock {C}oxeter-{K}nuth graphs and a signed {L}ittle map for type {$B$}
  reduced words.
\newblock \emph{{Electron. J. of Combin.}}, 21\penalty0 (P4.6), 2014.

\bibitem[Egge et~al.(2010)Egge, Loehr, and Warrington]{egge2010quasisymmetric}
E.~Egge, N.~A. Loehr, and G.~S. Warrington.
\newblock From quasisymmetric expansions to {S}chur expansions via a modified
  inverse {K}ostka matrix.
\newblock \emph{{E}uropean Journal of Combinatorics}, 31\penalty0 (8):\penalty0
  2014--2027, 2010.

\bibitem[Gessel(1984)]{Gessel}
I.~M. Gessel.
\newblock Multipartite {P}-partitions and inner products of skew {S}chur
  functions.
\newblock \emph{Contemp. Math}, 34:\penalty0 289--317, 1984.

\bibitem[Haglund et~al.(2005)Haglund, Haiman, and Loehr]{HHL}
J.~Haglund, M.~Haiman, and N.~Loehr.
\newblock A combinatorial formula for {M}acdonald polynomials.
\newblock \emph{J. Amer. Math. Soc}, 18\penalty0 (3), 2005.

\bibitem[Haglund et~al.(2011)Haglund, Luoto, Mason, and van
  Willigenburg]{haglund2011quasisymmetric}
J.~Haglund, K.~Luoto, S.~Mason, and S.~van Willigenburg.
\newblock Quasisymmetric {S}chur functions.
\newblock \emph{Journal of Combinatorial Theory, Series A}, 118\penalty0
  (2):\penalty0 463--490, 2011.

\bibitem[Haiman(1992)]{Haiman}
M.~D. Haiman.
\newblock Dual equivalence with applications, including a conjecture of
  {P}roctor.
\newblock \emph{Discrete Math}, 99\penalty0 (1):\penalty0 79--113, 1992.

\bibitem[Loehr and Warrington(2012)]{LW}
N.~A. Loehr and G.~S. Warrington.
\newblock Quasisymmetric expansion of {S}chur function plethysms.
\newblock \emph{Proc. Amer. Math. Soc}, 140\penalty0 (4):\penalty0 1159--1171,
  2012.

\bibitem[{Luoto} et~al.(2013){Luoto}, {Mykytiuk}, and {van
  Willigenburg}]{Luoto2013introduction}
K.~{Luoto}, S.~{Mykytiuk}, and S.~{van Willigenburg}.
\newblock \emph{{An introduction to quasisymmetric Schur functions. Hopf
  algebras, quasisymmetric functions, and Young composition tableaux.}}
\newblock New York, NY: Springer, 2013.
\newblock \doi{10.1007/978-1-4614-7300-8}.

\bibitem[Mason(2008)]{mason2008decomposition}
S.~Mason.
\newblock {A decomposition of Schur functions and an analogue of the
  Robinson-Schensted-Knuth algorithm}.
\newblock \emph{S{\'e}minaire Lotharingien de Combinatoire}, 57:\penalty0 B57e,
  2008.

\bibitem[Sagan(1987)]{sagan1987shifted}
B.~E. Sagan.
\newblock {Shifted tableaux, {S}chur $Q$-functions, and a conjecture of {R.
  S}tanley}.
\newblock \emph{Journal of Combinatorial Theory, Series A}, 45\penalty0
  (1):\penalty0 62--103, 1987.

\bibitem[Sagan(2001)]{sagan}
B.~E. Sagan.
\newblock \emph{The Symmetric Group: Representations, Combinatorial Algorithms,
  and Symmetric Functions}, volume 203 of \emph{Graduate Texts in Mathematics}.
\newblock Springer-Verlag, New York, second edition, 2001.

\bibitem[Stanley(1984)]{stanley1984number}
R.~Stanley.
\newblock {On the number of reduced decompositions of elements of Coxeter
  groups}.
\newblock \emph{Eur. J. Comb.}, 5:\penalty0 359--372, 1984.

\bibitem[Stanley(1999)]{Stanley2}
R.~P. Stanley.
\newblock \emph{Enumerative Combinatorics. {V}ol. 2}, volume~62 of
  \emph{Cambridge Studies in Advanced Mathematics}.
\newblock Cambridge University Press, Cambridge, 1999.

\bibitem[Worley(1984)]{worley1984theory}
D.~R. Worley.
\newblock \emph{A theory of shifted Young tableaux}.
\newblock PhD thesis, Massachusetts Institute of Technology, 1984.

\end{thebibliography}

\end{document}